\newcommand*\circled[1]{\tikz[baseline=(char.base)]{
            \node[shape=circle,draw,inner sep=2pt] (char) {#1};}}
\newtheorem{thm}{Theorem}[section]
\newtheorem{lem}[thm]{Lemma}
\newtheorem{prop}[thm]{Proposition}
{\theoremstyle{definition}

\newtheorem{rem}[thm]{Remark}
}
\tikzstyle{vertex}=[circle, draw, fill=black, inner sep=0pt, minimum size=4pt]
\tikzstyle{lnode}=[circle,white,draw=black!400!white,fill=black!400!white,inner sep=1pt, font=\scriptsize]
\tikzstyle{edge}=[line width=1.5pt,black!50!white]
\tikzstyle{redge}=[edge,red]
\tikzstyle{bedge}=[edge,blue]
\renewcommand{\P}{\mathbb P}
\newcommand{\C}{\mathbb C}
\newcommand{\R}{\mathbb R}
\renewcommand{\S}{\mathbb S}
\renewcommand{\H}{\mathbb H}
\newcommand{\D}{\mathbb D}
\newcommand{\qi}{\mathbf i}
\newcommand{\qj}{\mathbf j}
\newcommand{\qk}{\mathbf k}
\newcommand{\ci}{\mathrm i}
\newcommand{\eps}{\epsilon}
\newcommand{\SE}{\mathrm{SE}}
\newcommand{\SO}{\mathrm{SO}}
\title{And Yet It Moves:\\
	Paradoxically Moving Linkages in Kinematics}
\author{Josef Schicho,
	JKU Linz, Austria}
\begin{document}

\maketitle

Look at Figure~\ref{fig:ell}: you see a mechanism that is able to draw an ellipse. If you press gently on the green bar 
(connected to the right endpoint of the grey segment which is fixed), then the whole vehicle will start to move and bounce
so that the red point traces the ellipse. Historically, it was a famous challenge in the 19th-century
to find a mechanism that draws a straight line segment. Mathematicians even tried to prove the non-existence
of an exact solution. But then the French engineer Peaucellier and the Russian mathematician Lipkin independently found an
exact solution. % known as the ``Peaucellier/Lipkin inversor''. 
Starting from the mechanism in Figure~\ref{fig:ell}, we can do the same thing as well (even though this was not
the solution of Peaucellier/Lipkin): you can change some of the lengths so that the ellipse degenerates into a line segment
traced twice in a full round.

\begin{figure}[h]
\begin{center}
\includegraphics[width=7cm]{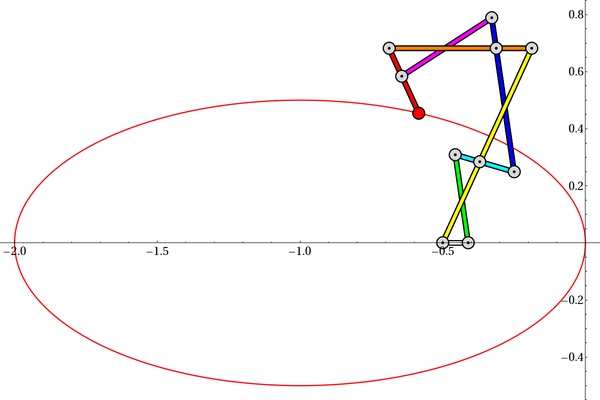}
\caption{A mechanism which is able to draw an ellipse. The short gray horizontal bar is fixed on the x-axis,
whereas all the other bars are allowed to move, according to the rotational joints which link them one to another.
}
\label{fig:ell}
\end{center}
\end{figure}

\paragraph{Kempe's Universality Theorem.}
A few years after the invention of the ``straight line mechanism'' by Peaucellier and Lipkin, 
Kempe~\cite{Kempe} proved that every plane algebraic curve can be drawn by a mechanism moving with one degree of freedom!
His construction uses the implicit equation of the algebraic curve, and the linkage draws a bounded subset of
the curve. Kempe himself admits that the mechanisms constructed by his general construction are quite complicated.
%To be precise, every compact subset of a plane algebraic subset can be drawn by a mechanism with rotational joints.
%Kempe himself did not care so much about such subleties; however, the statement was later made precise and proved 
%by \cite{KapovichMillson:02}. 
One of the objectives in this article is to explain how to construct a mechanism
that draws a given rational curve, i.e., a curve that it is given by a parameterization by rational functions.
Compared with Kempe, this construction gives simpler results when it applies (not every algebraic curve is rational).

\paragraph{Unexpected Mobility.}
Most of the mechanisms in this paper will be {\em paradoxical}, in the following sense: by a systematic
counting of degrees of freedom and constraints, one can estimate if a given mechanism moves. For a paradoxical
mechanism, this estimate predicts that the mechanism is rigid: there are sufficiently many constraints so that
there should be no freedom left for motion, except moving the mechanism as a whole like a rigid body.
Still, the mechanism does move non-trivially. 
We discuss five mathematical tools that somehow ``explain'' the unexpected mobility:
\begin{itemize}
\item edge colorings of graphs;
\item factorization of polynomials over skew coefficient rings;
\item symmetry as a rule changer for counting variables and constraints;
\item a projective duality relating a set of relative positions to a set of geometric parameters;
\item compactification, i.e., a closer analysis of ``limit configurations at infinity''.
\end{itemize}

\paragraph{Links and Joints.}
We need to introduce a few concepts from kinematics 
(please do not worry, we will keep the amount of definitions at a minimal level).
A {\em linkage} (or mechanism) in 3-space is composed of rigid bodies called links (or bars, rods) that are connected
by joints (e.g., hinges or spherical joints); examples occur in mechanical engineering
and robotics, but also in sports medicine -- the human skeleton may be considered as a quite
complex linkage -- and in chemistry, at a microscopic scale. If two links are connected by
a joint, then the type of joint determines a set of possible relative positions of one link with respect to the other.
A {\em revolute joint} (or R-joint or hinge) allows a one-dimensional set of rotations around an axis
which is fixed in both links; this set is a copy of $\SO_2$. This type of joint appears most frequently, for example
in doors and windows or in connection with wheels (see also Figure~\ref{fig:joints}, left). 
A {\em spherical joint} (or S-joint) allows a three-dimensional set of rotations
around a point which is fixed in both links; this set of motions is a copy of $\SO_3$. An example is the
hip joint of the human skeleton (see Figure~\ref{fig:joints}, middle). 
And a {\em prismatic joint} (or P-joint) allows a one-dimensional
set of translations in a fixed direction; this set is theoretically a copy of $\R$, but in reality, it is
a bounded interval. Teachers and students in mathematics often operate such a joint when moving a blackboard up and down 
(see Figure~\ref{fig:joints}, right, for a different example).

\begin{figure}[h]
\begin{center}
\includegraphics[height=3cm]{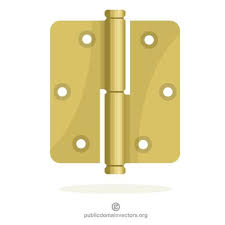}
\hspace{1cm}
\includegraphics[height=3cm]{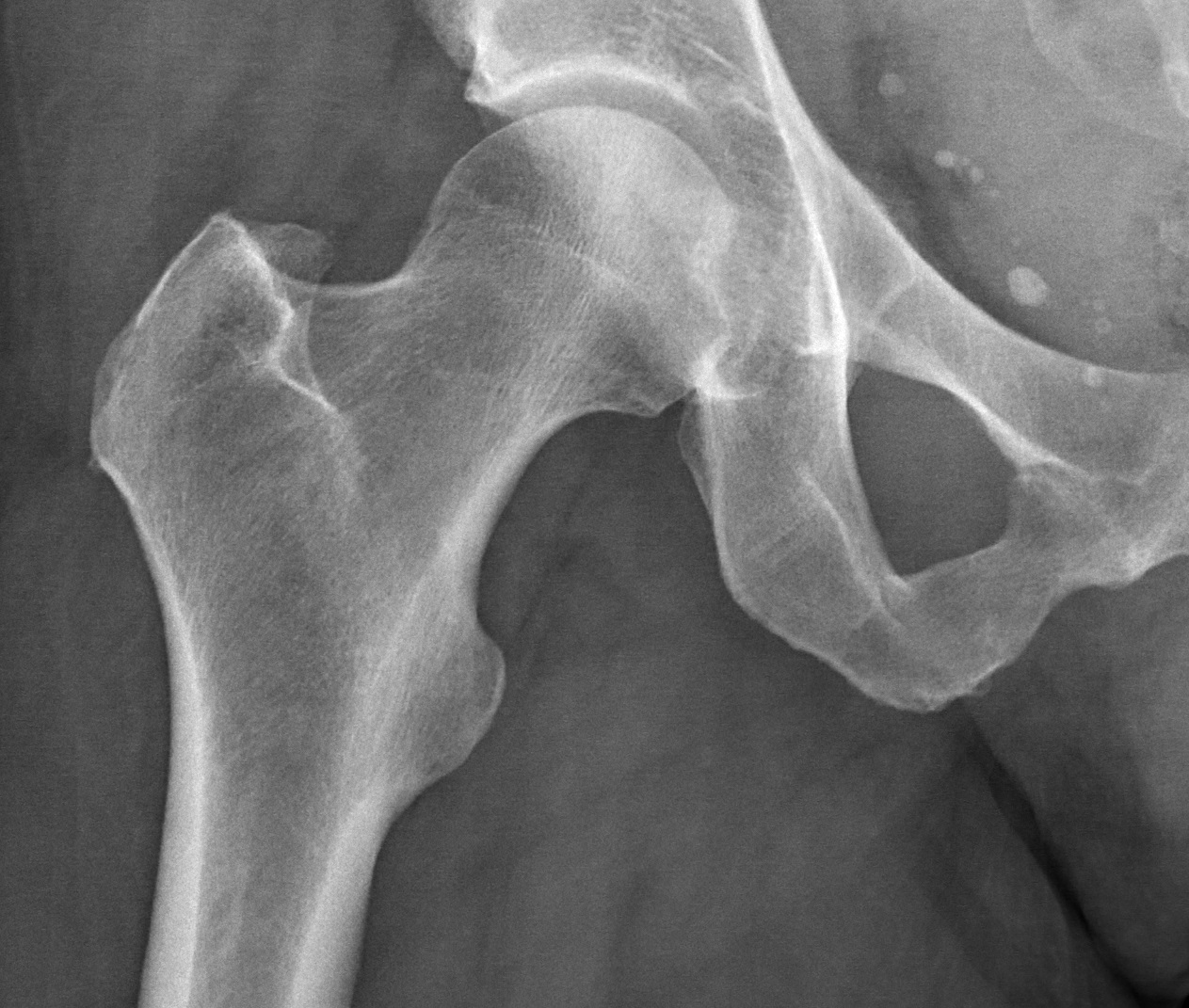}
\hspace{1cm}
\includegraphics[height=3cm]{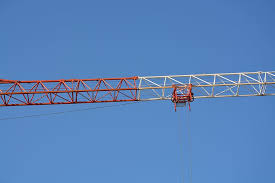}
\caption{
A hinge, the hip joint (spherical), and a prismatic joint on a crane.
}
\label{fig:joints}
\end{center}
\end{figure}

\paragraph{Configurations.}
If two links are not directly connected by a joint, then the set of possible relative positions of one with respect to the other is determined
by other links and joints forming chains that connect the two given links. In general, the description is more
complicated, and it is one of the main tasks of kinematics to determine these sets. In any case, they are
subsets of the group $\SE_3$ of direct isometries, also known as Euclidean displacements. 
The set of all possible relative positions of any pair of rigid bodies of a linkage $L$ is called the {\em configuration space}
of $L$. It is possible to express the constraints coming from the joints by algebraic equations in the joint parameters.
Therefore, the configuration space is an algebraic variety. Its dimension is called the {\em mobility} of $L$. 

A linkage is given by combinatoric data, namely the graph indicating which rigid bodies are connected
by joints and the type of joints such as revolute, spherical, prismatic; and by geometric parameters
determining the fixed position of the joint axis in each of the two links attached to any R-joint and
the fixed position of the anchor point in each of the two links attached to any S-joint.
The computation of the configuration space of a given linkage can be reduced to solving a system of
algebraic equations with parameters, with the size of the system determined by the combinatorics.
These systems form a rich source of computational problems in computer algebra and polynomial
system solving (see \cite{SommeseWampler:11} and the references cited there). 

\paragraph{Structure of the Paper.}
The paper has 6 sections. In Section~\ref{sec:gk}, we discuss combinatoric methods for estimating the
dimension of the configuration space, based on counting variables and equational conditions; this is
necessary to make precise what ``paradoxical'' means. Section~\ref{sec:over} deals with planar linkages
whose links are line segments joined by revolute joints, also known as moving graphs; we discuss 
graphs that should be rigid but actually move. Section~\ref{sec:rl} deals with spatial linkages in the plane
with revolute joints, and uses dual quaternions to construct examples of simply closed linkages that
are paradoxically movable. Section~\ref{sec:sym} deals with symmetries and explains how they can change 
the counting rules. Section~\ref{sec:pod} deals with a particular type of linkage called multipods or Stewart
platforms; here, projective duality is a powerful mathematical tool that allows us to construct paradoxical
examples. Section~\ref{sec:bond} is concerned with the problem of finding necessary conditions for mobility,
based on the idea to analyze the ``configurations at infinity'' of a mobile linkage. In the three
subsections of Section~\ref{sec:bond}, moving graphs, simply closed loops with revolute joints, and
multipods are revisited from this point of view what happens at infinity.

\paragraph{Acknowledgements.}
Matteo Gallet, Georg Grasegger, Christoph Koutschan, Jan Legersky, Zijia Li, Georg Nawratil, and Hans-Peter Schr\"ocker 
are coauthors of papers of which I took pictures - thanks for allowing me to use their work. I also
would like to thank Matteo Gallet, Zijia Li, and Jiayue Qi for helping to improve the narration.
This work has been supported by the Austrian Science Fund (FWF): P31061.

\section{Predicting Mobility} \label{sec:gk}

Given the combinatorics of a linkage, i.e., the number of its rigid bodies and the information
which of them are connected by joints, it is possible to estimate the mobility by counting free
variables and equational constraints. In kinematics, this is called the {\em Chebychev/Gr\"ubler/Kutzbach (CGK) formula}.

\paragraph{Moving Graphs.}
In this section, we start with the two-dimensional situation. Every link is a line segment in the plane $\R^2$.
In the plane, it does not make sense to distinguish revolute joints and spherical joints, and we do not consider prismatic joints.
All joints in the linkages we consider allowing rotations around a fixed point.
The combinatorics of the linkage is conveniently described
by a graph $G=(V,E)$, with vertices corresponding to joints and edges corresponding to links. If a line segment
has three or more (say $k$) joints connecting to other links, then we have to ``split it up'' into several edges: we get 
$k$ vertices corresponding to joints and we connect them by ${k\choose 2}$ edges. 
For instance, the green link in Figure~\ref{fig:ell} will correspond to a triangle in the graph, 
which is geometrically degenerate because its three vertices are collinear.
We assume that the linkage has no ``dangling links'', i.e., no vertices of degree~1, because
they would obviously rotate around the connected vertex.

For a graph $G=(V,E)$, an ``edge length assignment'' is a vector $\lambda\in\R^E$ indexed by the edges with
positive real coordinates $\lambda_e$, $e\in E$. A configuration of $(V,E,\lambda)$ is a collection $(\rho_v)_{v\in V}$ 
with $\rho_v\in\R^2$, such that for any edge $e=(u,v)$, we have $||\rho_u-\rho_v||=\lambda_e$. Two configurations $\rho,\rho'$
are equivalent if there is a direct isometry $\sigma:\R^2\to\R^2$ of the plane such that $\sigma(\rho_v)=\rho'_v$
for all $v$. If we choose two vertices $v,w\in V$ such that $\rho_v\ne\rho_w$, then there is a unique representative $\rho'$
in the equivalence class of $\rho$ such $\rho_v=(0,0)$ and $\rho_w=(0,c)$ for some $c>0$; we then say that
$\rho'$ is a {\em normalized} configuration.

For a given graph $G=(V,E)$ with edge length assignment $\lambda$, its normalized configurations are the solutions
of a system of algebraic equations of the form
\[ (x_a-x_b)^2+(y_a-y_b)^2 = \lambda_{ab}^2 \]
for each edge $\{a,b\}\in E$, and the normalization conditions
\[ x_v=y_v=x_w=0, y_w>0 . \]
The number of nonzero variables is $2|V|-3$, and the number of equations is $|E|$. We leave out the inequality,
because it is inessential for the dimension count. Now the CGK formula predicts that the linkage is
rigid if $2|V|-3=|E|$. If this number is nonnegative, then we call $2|V|-3-|E|$ the CGK estimate for 
the dimension of equivalence classes of configurations. In kinematics, this dimension
is called the {\em mobility} of the linkage.

\begin{figure}[h]
\begin{center} 
    \begin{tikzpicture}[scale=2]
      \node[vertex] (a) at (0,0) {};
      \node[vertex] (b) at (1,0) {};
      \node[vertex] (c) at (0.5,0.5) {};
      \node[vertex] (d) at (0,1.5) {};
      \node[vertex] (e) at (1,1.5) {};
      \node[vertex] (f) at (0.5,1) {};

      \draw[edge] (a)edge(b) (b)edge(c) (c)edge(a) (a)edge(d) (d)edge(e) (e)edge(f) (f)edge(d) (b)edge(e) (c)edge(f);
    \end{tikzpicture}\hspace{2cm}
        \begin{tikzpicture}[scale=2]
      \node[vertex] (a) at (0,0) {};
      \node[vertex] (b) at (1,0) {};
      \node[vertex] (c) at (0.5,0.5) {};
      \node[vertex] (d) at (0,1) {};
      \node[vertex] (e) at (1,1) {};
      \node[vertex] (f) at (0.5,1.5) {};

      \draw[edge] (a)edge(b) (b)edge(c) (c)edge(a) (a)edge(d) (d)edge(e) (e)edge(f) (f)edge(d) (b)edge(e) (c)edge(f);
      \begin{scope}[]
                                \node[vertex] (d2) at (0.6,0.8) {};
                                \node[vertex] (e2) at (1.6,0.8) {};
                                \node[vertex] (f2) at (1.1,1.3) {};

                                \draw[edge,black!20!white] (a)edge(b) (b)edge(c) (c)edge(a) (a)edge(d2) (d2)edge(e2) (e2)edge(f2) (f2)edge(d2) (b)edge(e2) (c)edge(f2);
      \end{scope}
    \end{tikzpicture}
\caption{Two planar linkages with 6 joint and 9 links with the same underlying graph. The left one is rigid,
the right one is mobile.}
\label{fig:3prism}
\end{center}
\end{figure}
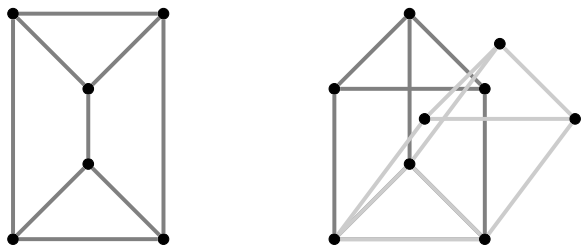

\paragraph{Generic Mobility.}
For a concrete instance, the CGK estimate comes without any warranties. But we can say something definite 
for the ``generic case''. 
Here we use the word ``generic'' in the following sense. 
Assume that a certain statement depends on instances parametrized
by an open subset of an irreducible algebraic variety $P$ (in most cases, $P$ is an open subset of a vector space). 
Then we say that the statement is 
generically true if the subset of instances such that the statement is false is contained in an algebraic subvariety
of $P$ of strictly smaller dimension. 

\begin{prop} \label{prop:cgk}
Let $G=(V,E)$ be a graph. Let $\lambda\in\R^E$ be a generic length assignment. Let $X_\lambda\in\R^{2|V|-3}$ be set 
of normalized configurations of $(V,E,\lambda)$.
If $2|V|-3-|E|\ge 0$, then $X_\lambda$ is either empty or a real manifold of dimension $2|V|-3-|E|$.
In particular, if $2|V|-3-|E|= 0$, then a generic length assignment allows only finitely many normalized configurations.
\end{prop}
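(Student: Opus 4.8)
The plan is to exhibit $X_\lambda$ as a single fiber of a polynomial map and then invoke a genericity theorem of Sard type. Write $n=2|V|-3$ and $m=|E|$, and let $\Phi\colon\R^{n}\to\R^{m}$ be the map sending a normalized configuration $\rho$, encoded by its $n$ free coordinates, to the vector $(\|\rho_a-\rho_b\|^2)_{\{a,b\}\in E}$. This is a polynomial map with real coefficients, and by the definition of a normalized configuration one has $X_\lambda=\Phi^{-1}(\mu)$ with $\mu=(\lambda_e^2)_{e\in E}$. Hence it suffices to produce a proper algebraic subset $Z\subseteq\R^{m}$ such that for every $\mu\notin Z$ the fiber $\Phi^{-1}(\mu)$ is empty or a smooth real manifold of dimension $n-m$; the passage from $\mu$ back to $\lambda$ is then harmless, since substituting $z_e\mapsto\lambda_e^2$ into a nonzero real polynomial again gives a nonzero real polynomial, so the preimage of $Z$ under coordinatewise squaring is contained in a proper algebraic subset of $\R^{E}$ as well. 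Note that the hypothesis $n-m\ge 0$ enters precisely here: only then can $\Phi$ have regular values at all.

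For the fiber statement I would complexify, writing $\Phi_\C\colon\C^{n}\to\C^{m}$ for the same polynomial map, letting $C_\C\subseteq\C^{n}$ be the Zariski-closed locus (defined over $\R$, being the common zero set of the $m\times m$ minors of the Jacobian) where the Jacobian of $\Phi_\C$ has rank $<m$, and setting $Z=\overline{\Phi_\C(C_\C)}$. I claim $Z$ is a proper subvariety of $\C^{m}$. Indeed, let $W$ be the irreducible Zariski closure of the image of $\Phi_\C$; if $\dim W<m$ then the Jacobian has rank $<m$ everywhere, so $C_\C=\C^{n}$ and $Z=W$ is proper, while if $\dim W=m$ then $W=\C^{m}$ and generic smoothness in characteristic zero (the algebraic counterpart of Sard's theorem) provides a dense open $U\subseteq\C^{m}$ over which $\Phi_\C$ is a smooth morphism, so that no point of a fiber over $U$ is a critical point, whence $\Phi_\C(C_\C)\cap U=\emptyset$ and $Z\subseteq\C^{m}\setminus U$. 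Since images and Zariski closures respect the field of definition, $Z$ is cut out by real polynomials, at least one nonzero, so its real points form a proper algebraic subset of $\R^{m}$. Finally, for $\mu\in\R^{m}\setminus Z$ every point of $\Phi_\C^{-1}(\mu)$ is regular, i.e.\ the Jacobian there has full rank $m$; restricting to $\Phi^{-1}(\mu)\subseteq\Phi_\C^{-1}(\mu)$ and using that the rank of a real matrix is the same over $\R$ and $\C$, the real implicit function theorem shows $\Phi^{-1}(\mu)$ is empty or a smooth real manifold of dimension $n-m=2|V|-3-|E|$. When this number is $0$ the set $X_\lambda$ is discrete, and since it is also a real algebraic set (the zero locus of the edge equations together with the linear normalization conditions) it is finite, which gives the last assertion.

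The step I expect to be the real obstacle is getting genericity in the strong form demanded by the statement — the bad instances lying in a proper algebraic subvariety — rather than the merely measure-zero exceptional set that a naive real Sard argument would produce. This is exactly where one must pass to $\C$ and use algebraic generic smoothness, and then be careful that every exceptional locus in sight is defined over $\R$, so that its real points really do form a proper real subvariety, and that full \emph{complex} rank of the Jacobian at a real point is what licenses the real implicit function theorem. The squaring-map bookkeeping and the dimension-zero remark are routine by comparison.
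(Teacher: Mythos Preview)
Your proof is correct and follows the same strategy as the paper's: realize $X_\lambda$ as a fiber of the squared-edge-length map $\R^{2|V|-3}\to\R^{|E|}$ and apply a Sard-type result. The paper is content to invoke the classical differential Sard theorem over $\R$ (after a case split on whether the image contains an open neighborhood of $\lambda$), whereas you complexify and use generic smoothness to ensure that the bad locus is a proper \emph{algebraic} subvariety rather than merely of measure zero; this is the same argument carried out with more care, and in fact matches the paper's own definition of ``generic'' more faithfully than the paper's proof does.
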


\begin{proof}
Let $f:\R^{2|V|-3}\to\R^{|E|}$ be the map $(x_a,y_a)_{a\in V}\mapsto ((x_a-x_b)^2+(y_a-y_b)^2)_{\{a,b\}\in E}$ (in the
domain, remove the three coordinates known to be zero). This is a differential map, which assigns to
each normalized configuration of points in $\R^2$ the square of the lengths of edges. Therefore $X_\lambda = f^{-1}(\lambda)$.

If the image of $f$ does not contain an open neighborhood of $\lambda$, then it also does not contain $\lambda$ because
$\lambda$ is chosen generically. Hence $X_\lambda$ is empty and there is nothing left to prove.

Otherwise, let $U$ be an open neighborhood of $\lambda$ and apply Sard's theorem to the map $f|_{f^{-1}(U)}$.
It implies that the set of critical values does not contain $\lambda$. Hence the Jacobian of $f$ has rank $E$ at every
point of $f^{-1}(U)$, and this shows the claim.
\end{proof}

\paragraph{Generic Rigidity.}
If $|E|=2|V|-3$, then two cases are possible: either the image of the map $f:\R^{2|V|-3}\to\R^{|E|}$ in the proof
contains an open subset. Then the graph is rigid: a generic configuration cannot move continuously, by Proposition~\ref{prop:cgk}.
Or the image of the map is contained in a subset of lower dimension.
The following theorem determines which of the two cases holds.

\begin{thm} \label{thm:laman}
Let $G=(V,E)$ be a graph such that $|E|=2|V|-3$. Then there is an open set of edge assignments $\lambda$ with
a finite and positive number of configurations if and only if $|E'|\le 2|V'|-3$ for every subgraph $G'=(V',E')$ of $G$. 
\end{thm}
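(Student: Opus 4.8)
The plan is to translate the statement into one about the Jacobian of the map $f$ from the proof of Proposition~\ref{prop:cgk}. Up to the harmless factor $2$, this Jacobian is the classical \emph{rigidity matrix} $R(\rho)$: one row per edge, columns indexed by vertex coordinates, the row of an edge $\{a,b\}$ carrying the vector $\rho_a-\rho_b$ in the columns of $a$ and $\rho_b-\rho_a$ in the columns of $b$; it is convenient to keep all $2|V|$ columns rather than the pinned $2|V|-3$, which does not affect the rank. By the inverse function theorem and Sard's theorem (exactly as in Proposition~\ref{prop:cgk}), the image of $f$ contains an open set --- equivalently, a generic $\lambda$ admits a nonempty, hence necessarily finite, set $X_\lambda$ --- if and only if $R(\rho)$ attains the value $|E|=2|V|-3$, which is its largest possible rank, at some $\rho$, and then at a generic $\rho$. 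Since $R$ has exactly $|E|$ rows, this says precisely that the edge-rows of $R$ are linearly independent for a generic $\rho$. So the theorem reduces to: the rigidity matrix of $G$ has generically independent rows if and only if $|E'|\le 2|V'|-3$ for every subgraph $G'=(V',E')$ with $|V'|\ge 2$.

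For the ``only if'' direction, fix a generic $\rho$ and a subgraph $G'=(V',E')$ with $|V'|\ge 2$. The rows of $R$ indexed by $E'$ are supported on the $2|V'|$ columns belonging to $V'$, so if the rows of $R$ are independent then these $|E'|$ of them are independent and span an $|E'|$-dimensional subspace of $\R^{2|V'|}$. But every such row annihilates the three-dimensional space of trivial infinitesimal motions of $V'$ --- the two constant translations and the infinitesimal rotation $w\mapsto\rho_w^{\perp}$ --- which is genuinely three-dimensional at generic, hence pairwise distinct, positions. Hence $|E'|\le 2|V'|-3$.

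The ``if'' direction is the heart of the matter; I would split it into a combinatorial reduction and a geometric invariance step. The combinatorial claim is that every graph with $|E|=2|V|-3$ satisfying $|E'|\le 2|V'|-3$ for all subgraphs can be built from a single edge by repeatedly applying move (I), adding a new vertex joined to two old ones, and move (II), the \emph{Henneberg edge split}: delete an edge $xy$, add a new vertex $w$, and join $w$ to $x$, $y$ and a third old vertex $z$. Both moves preserve the identity $|E|=2|V|-3$ and the sparsity bound, so by induction on $|V|$ it suffices to produce a reverse move on any graph that is not a single edge. The average degree is less than $4$, and one checks from the sparsity bound that the minimum degree is at least $2$, so there is a vertex $v$ of degree $2$ or $3$; degree $2$ gives a reverse move (I). For degree $3$ with neighbours $a,b,c$ one must show that at least one of the edges $ab,ac,bc$ can be added back to $G-v$ without violating sparsity. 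If not, each of the pairs $\{a,b\},\{a,c\},\{b,c\}$ lies inside a \emph{tight} set ($|E(W)|=2|W|-3$) of $G-v$; using submodularity of $W\mapsto 2|W|-|E(W)|$ when two of these sets share at least two vertices, and a direct inclusion--exclusion edge count when they pairwise meet in a single vertex, one finds in either case a tight set of $G-v$ containing all of $\{a,b,c\}$ --- and re-attaching $v$ to it violates the sparsity bound of $G$, a contradiction. The smaller graph produced by a reverse move again satisfies the hypotheses, so the induction closes.

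For the geometric step I would check that a single edge is generically rigid (its rigidity matrix is a single nonzero row) and that moves (I) and (II) preserve the property ``$R$ attains full rank at some configuration''. Move (I) is immediate: the two new rows meet the two new columns in a $2\times2$ block, invertible once the new vertex is in general position, so the rank grows by exactly $2$. Move (II) is the crucial point. Choose a configuration of the smaller graph $H$ at which $R_H$ has full rank and $\rho_x,\rho_y,\rho_z$ are not collinear --- such a configuration exists because $H$ is generically rigid --- and observe that full rank of $R_H$ produces an infinitesimal flex $\phi$ of $H-xy$ with $\langle\rho_x-\rho_y,\phi_x-\phi_y\rangle\ne 0$. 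Then place the new vertex $w$ at the midpoint of the segment $\rho_x\rho_y$: the three new rows meet the two new columns in a rank-$2$ block, and eliminating those columns leaves a single residual combination of the new rows, supported on the old columns, whose inner product with $\phi$ is a nonzero multiple of $\langle\rho_x-\rho_y,\phi_x-\phi_y\rangle$ and hence nonzero. That residual row therefore lies outside the row span of $R_{H-xy}$, so the rank of $R_G$ reaches its maximal value $2|V(G)|-3$; hence $G$ is rigid at this configuration, and therefore generically. I expect the main obstacle to be exactly these two pieces of the ``if'' direction: the degree-$3$ reduction lemma, whose degenerate case is easy to get wrong, and the rank bookkeeping that makes the midpoint placement in move (II) succeed; the passage to the rigidity matrix and the ``only if'' direction are routine.
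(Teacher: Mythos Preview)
Your argument is correct and is essentially the classical Henneberg-construction proof of the Pollaczek-Geiringer/Laman theorem. Note, however, that the paper does \emph{not} actually prove this theorem: immediately after the statement it attributes the result to \cite{Geiringer:27} and \cite{Laman:70}, and only spells out the necessity direction (``if there is a subgraph $G'=(V',E')$ with $|E'|>2|V'|-3$, then the algebraic system describing normalized configurations of the subgraph is overdetermined, so for generic edge length assignments there is no configuration''). So there is no ``paper's own proof'' of the sufficiency direction to compare against; you have supplied one where the paper simply cites the literature.

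On the part the paper does address, your necessity argument via the rank of the rigidity matrix and the three-dimensional space of trivial infinitesimal motions is the linear-algebra dual of the paper's overdetermined-system remark, and the two are equivalent. For the sufficiency direction, your write-up is sound: the degree-$3$ reduction lemma is handled correctly (the submodularity argument when two tight sets share at least two vertices, and the three-set inclusion--exclusion count when they pairwise meet in a single vertex, together produce a tight set containing all three neighbours, and re-attaching $v$ then violates sparsity); and the midpoint placement for move~(II) works exactly because $\mathrm{row}_{wx}+\mathrm{row}_{wy}$ vanishes on the $w$-columns and equals $\tfrac12\,\mathrm{row}_{xy}$ on the old columns, which is independent of the rows of $R_{H-xy}$ precisely because $R_H$ has full rank. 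One cosmetic point: the sparsity hypothesis should be read as ranging over subgraphs with $|V'|\ge 2$, which you do make explicit; the theorem statement in the paper leaves this implicit.
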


This theorem was proven by Pollaczek-Geiringer~\cite{Geiringer:27} and rediscovered 40 years later 
by Laman ~\cite{Laman:70}. The graph that satisfy the necessary and sufficient condition above are called 
{\em Laman graphs}. The necessity is easy to see: if there is a subgraph $G'=(V',E')$ with $|E'|> 2|V'|-3$,
then the algebraic system describing normalized configurations of the subgraph is overdetermined. So, for generic
edge length assignments, there is no configuration for the subgraph, and therefore also no configuration for
the graph $G$ itself.

In dimension~3, the CGK estimate for the mobility of a graph $G=(V,E)$ is equal to $3|V|-6-|E|$. Proposition~\ref{prop:cgk}
holds with that bound: if $\lambda\in\R^{|E|}$ is a generic edge assignment, and the normalized
configuration space $X_\lambda$ is not empty, then it has dimension $3|V|-6-|E|$. The condition $|E'|\le 3|V'|-6$
for every subgraph $(V',E')$ is still necessary for the statement that $X_\lambda$ is generically not empty,
but it is not sufficient: Figure~\ref{fig:l3d} shows the ``double banana'', a graph with 8 vertices and 18 edges, such that a generic
assignment of its vertices to points in $\R^3$ is flexible. The Jacobi matrix of the map $f$ mapping normalized configurations
to edge assignments (see Proposition~\ref{prop:cgk}) is quadratic and singular. So the 3-dimensional analogue
of Theorem~\ref{thm:laman} is not true, and the search for another combinatoric analogue is an active research
topic in rigidity theory (see \cite{Meera}).

\begin{figure}[h]
\begin{center} 
    \begin{tikzpicture}[scale=4]
      \node[vertex] (a) at (0,0) {1}; 
      \node[vertex] (b) at (0,1.5) {2};
      \node[vertex] (c) at (-0.6,0.5) {3};
      \node[vertex] (d) at (-0.5,0.6) {4};
      \node[vertex] (e) at (-0.36,0.55) {5};
      \node[vertex] (f) at (0.6,0.6) {6};
      \node[vertex] (g) at (0.5,0.4) {7};
      \node[vertex] (h) at (0.36,0.55) {8};
  
      \draw[edge]  (a)edge(c) (a)edge(d) (a)edge(e) (b)edge(c) (b)edge(d) (b)edge(e) (c)edge(d) (c)edge(e) (d)edge(e);
      \draw[bedge] (a)edge(f) (a)edge(g) (a)edge(h) (b)edge(f) (b)edge(g) (b)edge(h) (f)edge(g) (f)edge(h) (g)edge(h);
    \end{tikzpicture}
\end{center}
\caption{The smallest graph that is generically mobile and still fulfills the 3D-analogue of Laman's condition
  for generic rigidity: $3|V|-6=|E|$, and $3|V'|-6\ge |E'|$ for every subgraph $(V',E')$. The blue part may revolve
  around the line through two vertices.}
\label{fig:l3d}
\end{figure}
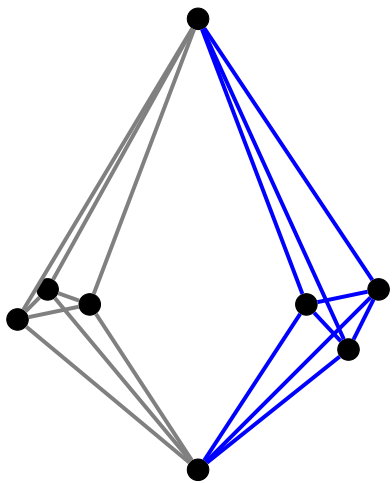

\paragraph{Molecules.}
For some classes of graphs, the 3-dimensional analogue of Theorem~\ref{thm:laman} is true.
The most interesting class appears in a statement which used to be called
the ``Molecular Conjecture'', until it was proven in \cite{KatohTanigawa:11}. It is of special interest
because it makes a statement on linkages that appear as models of molecules: atoms are modeled as balls with cylinders
attached. A molecular joint is a cylinder who is joined to an atom at both of its ends (see Figure~\ref{fig:molecule}).
From a kinematic point of view, a molecule model is a linkage with R-joints, such that for each link, all axis
of joints attached to this link meet in a fixed point (the center of the atom).

\begin{figure}[h]
\begin{center}
\includegraphics[height=4cm]{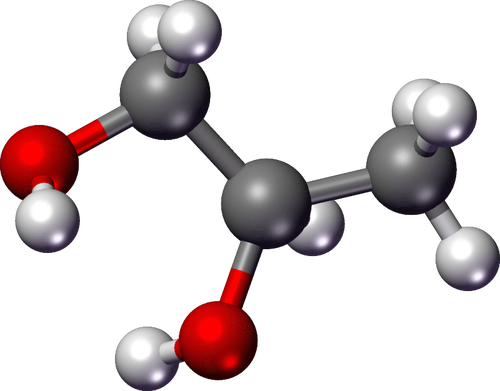}
\end{center}
\caption{A kinematic model of the Methoxyethanol molecule $\mathrm{C}_3\mathrm{H}_6(\mathrm{OH})_2$. 
        The cylinders are joints allowing a rotation around the central axis of the cylinder. 
	Note that the axis always passes through the centers of the joined atoms.}
\label{fig:molecule}
\end{figure}

The following equivalent re-formulation appear in \cite{Jackson_Jordan:07}.
For any graph, we can define its {\em square} by drawing an edge between any two vertices of graph distance two.
A graph is called a {\em square graph} if it can be obtained as the square of a subgraph. 

\begin{thm}[KatohTanigawa]
Assume that $G=(V,E)$ is a square graph such that $|E|=3|V|-6$. Assume that $|E'|\le 3|V'|-6$ for every
subgraph of $G$. Then a generic assignment of the vertices by points in $\R^3$ defines a rigid embedding.
\end{thm}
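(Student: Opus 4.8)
The plan is to route the statement through the \emph{Molecular Conjecture} of Tay and Whiteley, which is the form in which Katoh and Tanigawa actually established it and whose equivalence with the ``square graph'' formulation above is due to Jackson and Jord\'an. The bridge is an observation that explains why squares of graphs are the right class. Suppose $G=H^2$ for a spanning subgraph $H$. For each vertex $v$ of $H$ the closed neighbourhood $N_H[v]$ spans a complete subgraph of $G$, since any two neighbours of $v$ lie at distance at most $2$; moreover $\bigcup_{v}\binom{N_H[v]}{2}=E(G)$. A complete graph placed at generic points of $\R^3$ is rigid, so a generic bar-joint realisation of $G$ decomposes into rigid ``bodies'' $B_v$, one per vertex of $H$, with $B_u$ meeting $B_v$ in the points $p_u,p_v$ (and in the images of the common $H$-neighbours of $u$ and $v$) exactly when $uv\in E(H)$. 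Hence $G$ is generically rigid iff this system of overlapping rigid bodies is rigid; triangles of $H$ only force some bodies to merge, which is harmless, so the system is a \emph{body-and-hinge} framework with body-graph essentially $H$, the hinge of an edge $uv$ being the line through $p_u$ and $p_v$. Since all the hinges meeting $B_v$ pass through the single point $p_v$, this body-and-hinge framework is in fact a \emph{molecular} one in the sense of Figure~\ref{fig:molecule}.

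With this dictionary the theorem splits into a combinatorial half and a geometric half. For the combinatorial half I would invoke the Tay--Whiteley characterisation of \emph{generic} body-and-hinge rigidity in $\R^3$: the body-graph is generically rigid precisely when the multigraph obtained by replacing each hinge edge by five copies contains six edge-disjoint spanning trees; by the tree-packing theorem of Nash-Williams and Tutte this can be tested on all vertex partitions. One then checks -- and this part is pure bookkeeping, translating the cliques $N_H[v]$ and the hinge overlaps into edge counts of $G$ and matching them against the inequalities between $6|V_H|-6$ and $5|E_H|$ -- that for $G=H^2$ this tree-packing condition is equivalent to the hypotheses $|E|=3|V|-6$ and $|E'|\le 3|V'|-6$ for every subgraph. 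I would carry out this step first, since it is routine and it pins down exactly what remains to be proved geometrically.

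The geometric half is the heart of the matter and the step I expect to be the real obstacle: one must show that the \emph{concurrency} constraints -- all hinge lines at a body meeting in one point -- do not lower the rank of the rigidity matrix below its generic body-and-hinge value. Equivalently, inside the subvariety of body-and-hinge realisations cut out by the concurrency conditions -- which is \emph{not} generic inside the space of all realisations -- the infinitesimally rigid ones are still dense as soon as the Tay--Whiteley count is met. The plan here is the inductive strategy of Katoh and Tanigawa: describe the extremal body-graphs, those for which the five-fold multigraph \emph{partitions} into six spanning trees, by a generation scheme -- a small base graph together with hinge-addition and splitting operations that preserve the tree-packing structure -- and prove by induction along this scheme that a generic molecular realisation of such a graph is infinitesimally rigid. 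The delicate point, where essentially all of the effort goes, is the inductive step: after performing an operation one must exhibit \emph{one} realisation that simultaneously satisfies every concurrency constraint and is infinitesimally rigid. Because the concurrency locus is non-generic this cannot be obtained by a blind perturbation; instead one builds the new realisation explicitly out of the old rigid one -- placing the new atom centre and re-using the inherited configuration so that the freshly created hinges automatically pass through the prescribed centres -- and then shows that the enlarged rigidity matrix has full rank, organising the computation so that the rigidity already established for the smaller framework is not destroyed. Once such a witnessing realisation exists for every extremal body-graph, so that the molecular rigidity matroid coincides with the body-and-hinge matroid, the theorem follows by combining this with the combinatorial equivalence of the previous step: a square graph $G=H^2$ with $|E|=3|V|-6$ and all subgraph bounds satisfied corresponds to an $H$ meeting the Tay--Whiteley tree-packing condition, whose molecular framework is therefore generically rigid, and hence so is $G$.
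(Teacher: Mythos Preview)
The paper does not actually prove this theorem: it is stated with attribution to Katoh and Tanigawa, and the only argument the paper supplies is the short paragraph immediately following the statement, explaining why the square-graph formulation is equivalent to the molecular conjecture (atoms as vertices of $H$, cylinders as edges, and the observation that fixing the bond angles at an atom amounts to fixing the distances between neighbours of that atom, which are precisely the extra edges of $H^2$). Your first paragraph reproduces this equivalence, in somewhat more detail and with the body--hinge language made explicit; so on the part the paper does cover, you and the paper agree.

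Everything else in your proposal --- the Tay--Whiteley tree-packing characterisation, the translation of the $(3,6)$-sparsity hypotheses into the $6$-tree condition on $5H$, and the inductive generation scheme for the geometric half --- goes well beyond what the paper attempts; the paper simply defers all of this to \cite{KatohTanigawa:11} and \cite{Jackson_Jordan:07}. As a roadmap to the literature your plan is accurate and the division of labour you describe (combinatorial equivalence due to Jackson--Jord\'an, geometric rank preservation under concurrency due to Katoh--Tanigawa) is the correct one. Two cautions: the ``routine bookkeeping'' in your combinatorial half is the content of the Jackson--Jord\'an paper and is not entirely mechanical, and your sketch of the inductive step in the geometric half is, as you yourself flag, only a sketch --- the actual argument in \cite{KatohTanigawa:11} is long and the construction of the witnessing molecular realisation after a splitting operation is considerably more intricate than ``placing the new atom centre and re-using the inherited configuration.'' But none of this is a gap relative to the paper, which makes no attempt at either step.
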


To see the equivalence with the molecule conjecture, start with a molecule and draw a graph $G$ with vertices
corresponding to atoms and edges corresponding to cylinders in the molecular model. It is clear that every motion
of the molecule fixes the length of each edge. However, every such motion also fixes the angle between two cylinders
attached to the same atom. But this is equivalent to the statement that the motion fixes the length between the
two atoms that are on the other end of the two cylinders. If you add an edge for any two such atoms, then
you get exactly the square of $G$.

\section{Overconstrained Linkages} \label{sec:over}

Let us call a linkage {\em paradoxical} if a generic linkage with the same combinatoric structure is rigid,
but the linkage itself is moving. For instance, an instance of a Laman graph which is mobile in the plane
is paradoxical. 

\paragraph{Should we Expect Paradoxical Linkages?}
Let us do a simple variable counting, as in the CGK formula, to see
if we should be surprised by the existence of paradoxical linkages. Fix a combinatorial structure, for instance
a Laman graph $G=(V,E)$. For a generic instance, the number of non-equivalent configurations is finite. These configurations
are real solutions of a system of algebraic equations; let $N_G$ be the number of complex solutions of these system.
Note that the number of complex solutions does not depend on the choice of the generic instance, as long as the choice
is generic, in contrast to the number of real solutions, which would depend on the choice of a generic instance. 

For any system of equations that has finitely many solutions, it is possible to compute a single univariate polynomial,
such that the solutions of the system are in bijection with the zeroes of the polynomial. In theory, it is possible to
compute such a polynomial by introducing a new variable together with a generic linear equation between the new variable
and the old variables, and then by eliminating all old variables. (In practice, it turns out that the elimination is
quite costly.) The process can even be carried out in the presence of parameters, which will then also appear in the coefficients
of the univariate polynomial. Let us therefore assume that we have now, for each graph $G=(V,E)$, such a polynomial $F_G$,
with coefficients depending on an edge length assignment $\lambda$. The degree of $F_G$ would then have to be equal to $N_G$,
because it has $N_G$ complex solutions and we may assume that $F_G$ is squarefree. 

Now, a labeled graph $(V,E,\lambda)$ is mobile
if and only of all $N_G+1$ coefficients of the polynomial are zero, i.e. the polynomial $F_G$ vanishes identically and
there are infinitely many configurations. (We have to take non-real configurations into account, but let us ignore this
point for the moment.) The instances of the graph form a family of dimension $|E|$ parametrized by the edge lengths.
In order to find a paradoxical linkage, we need to find a solution of a system in $|E|$ variables with $N_G+1$
equations. So we need to compare these two numbers. If the number $|E|$ of variables is bigger than or equal to
the number $N_G+1$ of equations, then we should not be surprised by the existence of paradoxical linkages.

Currently, we do not know any lower bounds for $N_G$, but there are conjectured lower bounds which are exponential in $|E|$,
so the system of equations that would have to be fulfilled for the parameters of a paradoxical linkage would be
highly overdetermined. This is also true for small graphs: for $5\le |V|\le 12$, the numbers $N_G$ are all known
\cite{Schicho:17c}, and we always have $|E|<N_G+1$. 
Consequently, the very existence of paradoxical linkages is itself paradoxical! At least, this is so for the type of linkages we
considered in this counting, namely moving graphs in the plane.

\paragraph{Bipartite Graphs.}
The smallest mobile Laman graphs have 6 vertices. One is the complete bipartite graph $K_{3,3}$. In \cite{Dixon:99},
Dixon describes a construction to make arbitrary bipartite graphs mobile. The set $V$ of vertices is partitioned into
two disjoint subsets $V_1,V_2$. Put all vertices in $V_1$ on the $x$-axis and all vertices of $V_2$ on the $y$-axis.
An easy exercise using Pythagoras' Theorem shows that the linkage is actually moving.

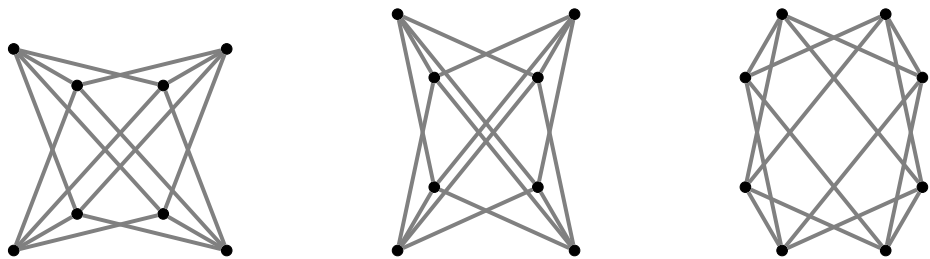
\begin{figure}[h]
\begin{center} 
\begin{tikzpicture}[scale=0.9]
      \node[vertex] (a1) at (1.573,1.490) {};
      \node[vertex] (a2) at (1.573,-1.490) {};
      \node[vertex] (a3) at (-1.573,1.490) {};
      \node[vertex] (a4) at (-1.573,-1.490) {};
      \node[vertex] (b1) at (0.636,0.949) {};
      \node[vertex] (b2) at (0.636,-0.949) {};
      \node[vertex] (b3) at (-0.636,0.949) {};
      \node[vertex] (b4) at (-0.636,-0.949) {};

      \draw[edge] (a1)edge(b1) (a1)edge(b2) (a1)edge(b3) (a1)edge(b4) 
                  (a2)edge(b1) (a2)edge(b2) (a2)edge(b3) (a2)edge(b4) 
                  (a3)edge(b1) (a3)edge(b2) (a3)edge(b3) (a3)edge(b4) 
                  (a4)edge(b1) (a4)edge(b2) (a4)edge(b3) (a4)edge(b4) ;
    \end{tikzpicture}\hspace{2cm}
\begin{tikzpicture}[scale=0.9]
      \node[vertex] (a1) at (1.307,1.747) {};
      \node[vertex] (a2) at (1.307,-1.747) {};
      \node[vertex] (a3) at (-1.307,1.747) {};
      \node[vertex] (a4) at (-1.307,-1.747) {};
      \node[vertex] (b1) at (0.765,0.810) {};
      \node[vertex] (b2) at (0.765,-0.810) {};
      \node[vertex] (b3) at (-0.765,0.810) {};
      \node[vertex] (b4) at (-0.765,-0.810) {};

      \draw[edge] (a1)edge(b1) (a1)edge(b2) (a1)edge(b3) (a1)edge(b4) 
                  (a2)edge(b1) (a2)edge(b2) (a2)edge(b3) (a2)edge(b4) 
                  (a3)edge(b1) (a3)edge(b2) (a3)edge(b3) (a3)edge(b4) 
                  (a4)edge(b1) (a4)edge(b2) (a4)edge(b3) (a4)edge(b4) ;
    \end{tikzpicture}\hspace{2cm}
\begin{tikzpicture}[scale=0.9]
      \node[vertex] (a1) at (1.307,0.810) {};
      \node[vertex] (a2) at (1.307,-0.810) {};
      \node[vertex] (a3) at (-1.307,0.810) {};
      \node[vertex] (a4) at (-1.307,-0.810) {};
      \node[vertex] (b1) at (0.765,1.747) {};
      \node[vertex] (b2) at (0.765,-1.747) {};
      \node[vertex] (b3) at (-0.765,1.747) {};
      \node[vertex] (b4) at (-0.765,-1.747) {};

      \draw[edge] (a1)edge(b1) (a1)edge(b2) (a1)edge(b3) (a1)edge(b4) 
                  (a2)edge(b1) (a2)edge(b2) (a2)edge(b3) (a2)edge(b4) 
                  (a3)edge(b1) (a3)edge(b2) (a3)edge(b3) (a3)edge(b4) 
                  (a4)edge(b1) (a4)edge(b2) (a4)edge(b3) (a4)edge(b4) ;
    \end{tikzpicture}\hspace{2cm}
\end{center}
\caption{A mobile complete bipartite graph $K_{4,4}$. Its points form two rectangles sharing their symmetry axes.}
\label{fig:dixon2}
\end{figure}

Using computer algebra, Husty/Walter\cite{HustyWalter:07} proved that Dixon's construction is one of two possible
mobile $K_{3,3}$'s; in all other cases, $K_{3,3}$ is rigid.
The second mobile $K_{3,3}$, also found in \cite{Dixon:99}, is a mobile $K_{4,4}$ with two points removed -- see 
Figure~\ref{fig:dixon2}. The configuration has a finite symmetry group, namely the symmetry of a rectangle. Indeed, the
points form two rectangles sharing their symmetry axes. 

Note that Dixon I applies to arbitrary bipartite graphs.
In contrast, the symmetric construction Dixon II does not scale, it just applies to $K_{4,4}$ and to its subgraphs.

\paragraph{NAC colorings.}
Another construction that does scale is based on the possibility of partitioning the set $E$ of edges into two non-empty subsets
$E_r,E_b$ of red and blue edges. We assume that every cycle in $G$ is either unicolored or has at least two edges of
both colors; especially, triangles are always unicolored. Such a partition is called a NAC -- ``no almost (unicolored) cycle'' --
coloring. For each connected component of the subgraph $R_i$ of $(V,E_r)$ we assign a complex number $z_i$, and for each
vertex of the subgraph $B_j$ of $(V,E_b)$, we assign a complex number $w_j$. Then we choose a real parameter $t$ parametrizing
a periodic motion, as follows: map any vertex in $R_i\cap B_j$ to the point $z_i+e^{it}w_j\in\C$. But $\C$ is a model
for the plane $\R^2$. Hence we have constructed, for any real value of $t$, a configuration of the graph in $\R^2$.
The construction is continuous in $t$, so we may call it a motion. The blue
edges always keep their orientation while the red edges are rotated with uniform speed, as in Figure~\ref{fig:nac}.

\begin{figure}[h]
\begin{center} 
\includegraphics[width=12cm]{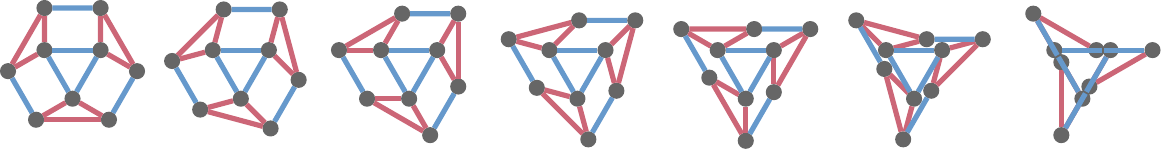}
\end{center}
\caption{A mobile graph with a NAC coloring. The blue edges remain parallel to the original orientation, and the orientation
	of the red edges rotates with speed that is independent of the edge, as long as it is red.}
\label{fig:nac}
\end{figure}

A partition of $E$ into $E_r\cup E_b$ is a NAC coloring if and only if we can map the vertices into the plane so that all red edge are
parallel to the first coordinate axis and all blue edges are parallel to the second coordinate exists. It is obvious that this map
defines a flexible embedding. Such a moving graph can be constructed by taking a very small moving graph, with three vertices and two edges,
and enlarging it by parallel copies of edges. But wait - we can do the same with other graphs as well! Let us start with 
a moving quadrilateral. Then we add more edges that are parallel to one of the four edges of the quadrilateral. We get a bigger graph
with the property that every motion of the quadrilateral induces a motion of the bigger graph -- see Figure~\ref{fig:oj} for 
an example.

\begin{figure}[h]
\begin{center} 
    \begin{tikzpicture}[scale=0.5]
      \node[vertex] (a) at (0,0) {};
      \node[vertex] (b) at (5,0) {};
      \node[vertex] (c) at (-2,1) {};
      \node[vertex] (d) at (3,1) {};
      \node[vertex] (e) at (2.411,3.766) {};
      \node[vertex] (f) at (7.411,3.766) {};
      \node[vertex] (g) at (0.411,4.766) {};
      \node[vertex] (h) at (5.411,4.766) {};

      \draw[redge] (a)edge(b) (b)edge(d) (d)edge(e) (a)edge(e);
      \draw[edge]  (a)edge(c) (b)edge(f) (c)edge(d) (e)edge(f) (f)edge(h) (d)edge(h) (c)edge(g) (e) edge (g) (g)edge(h);
    \end{tikzpicture}
    \hspace{1.5cm}
    \begin{tikzpicture}[scale=0.5]
      \node[vertex] (a) at (0,0) {};
      \node[vertex] (b) at (5,0) {};
      \node[vertex] (c) at (-1,2) {};
      \node[vertex] (d) at (4,2) {};
      \node[vertex] (e) at (2,4) {};
      \node[vertex] (f) at (7,4) {};
      \node[vertex] (g) at (1,6) {};
      \node[vertex] (h) at (6,6) {};

      \draw[redge] (a)edge(b) (b)edge(d) (d)edge(e) (a)edge(e);
      \draw[edge]  (a)edge(c) (b)edge(f) (c)edge(d) (e)edge(f) (f)edge(h) (d)edge(h) (c)edge(g) (e) edge (g) (g)edge(h);
    \end{tikzpicture}
\end{center}
\caption{A moving Laman graph with 8 vertices and 13 edges.
	The two figures -- 2D, not 3D! To see this picture correctly, please switch off your spatial perception for a moment! 
	-- show two of infinitely many possible configurations of the graph in $\R^2$, with the same edge lengths.
	Every edge is parallel to one of the four sides of the red quadrilateral. 
	The red quadrilateral has obviously infinitely many configurations; and any configuration of the
	red quadrilateral can be extended to a configuration of the whole graph.}
\label{fig:oj}
\end{figure}
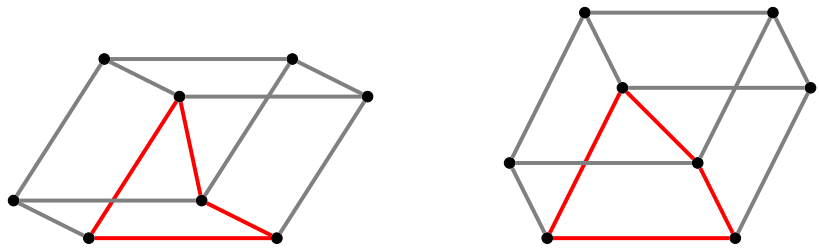

%The NAC construction of a mobile graph maps all points in the intersection of a red and of a blue components to the same
%point in the plane. In other words: if there is a pair of distinct vertices $u,v\in V$ that can connected by a red path
%and by a blue path, then these two vertices are identified.

In Section~\ref{sec:bond}, we will see that the existence of a NAC coloring is not only sufficient, but also necessary for
the existence of a length assignment that makes a given graph mobile in $\R^2$. This result requires a few tools from algebraic geometry.
More examples of graphs moving in the plane and NAC-colorings can be found in
\url{https://jan.legersky.cz/project/movablegraphs/}.

\section{Revolute Loops and Dual Quaternions} \label{sec:rl}

Let $n\ge 4$. An $n$R chain is a linkage consisting of $n+1$ links connected by $n$ revolute joints. In robotics, the first link is
called the {\em base} and the last link is called the {\em hand} or {\em end effector}. Each joint can is controlled by
an electric motor in such a way that the end effector performs a particular task. 

If we firmly connect the first and the last link of an $n$R chain, then we get an $n$R loop: a linkage with $n$ links connected
cyclically by $n$ revolute joints. According to the CGK formula, the mobility is $\max(0,n-6)$. If $n\ge 7$, then a generic $n$R loop
is generically mobile. A generic 6R linkage is rigid; the number of configurations, including complex solutions, is $16$ (see
\cite[11.5.1]{selig05}). For $n=5$ and $n=4$, we obtain an overdetermined system of equations.

\begin{figure}[h]
\begin{center}
\includegraphics[height=4.5cm]{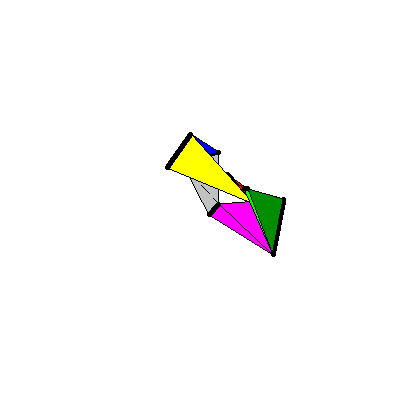} \hspace{-1.5cm}
\includegraphics[height=4.5cm]{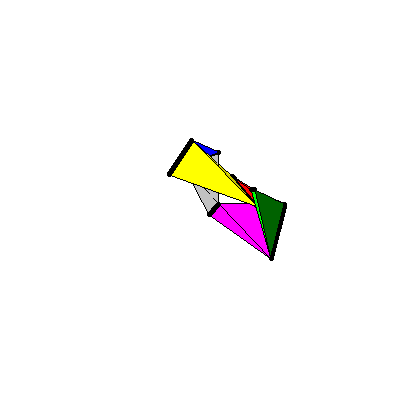} \hspace{-1.5cm}
\includegraphics[height=4.5cm]{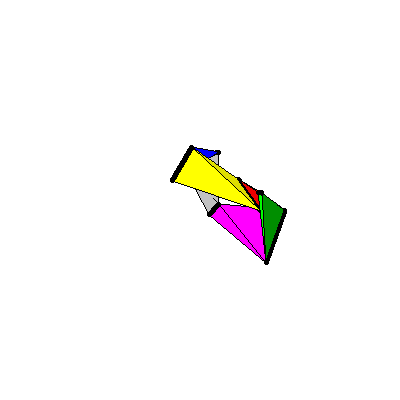} \hspace{-1.5cm}
\includegraphics[height=4.5cm]{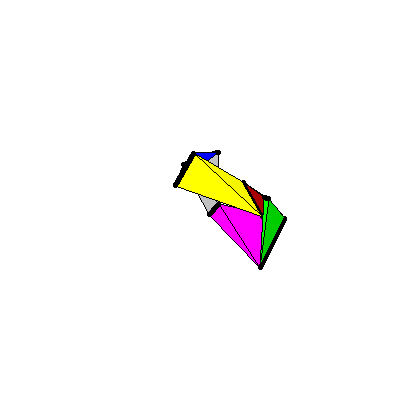} \\ \vspace{-1.5cm}
\includegraphics[height=4.5cm]{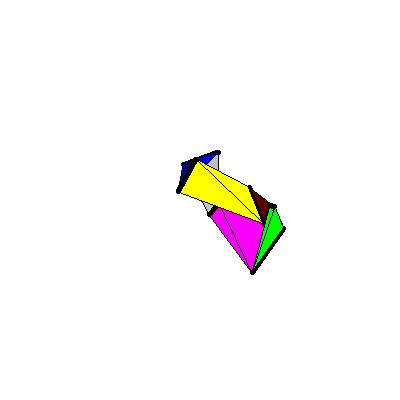} \hspace{-1.5cm}
\includegraphics[height=4.5cm]{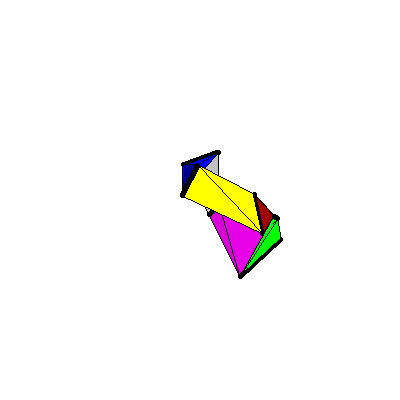} \hspace{-1.5cm}
\includegraphics[height=4.5cm]{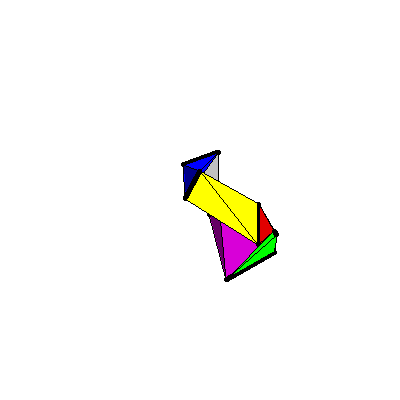} \hspace{-1.5cm}
\includegraphics[height=4.5cm]{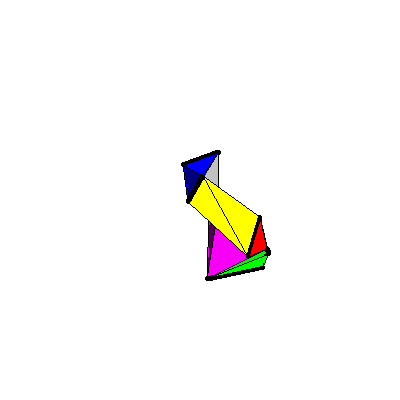} 
\end{center}
\caption{
	A thumbnail movie of a mobile 6R loop. Each of the 6 link is realized as a tetrahedron. Each tetrahedron has two edges,
	opposite to each other, playing the role of R-joints connecting the link to its to two neighbors.
	}
\label{fig:thumb}
\end{figure}

\begin{rem} \label{rem:red}
Revolute loops may be considered as special cases of linkages of graph type, in the following way: we pick two points on each joint
axis and connect them by an edge. For each link, we draw 4 additional edges connecting the points on the two axes that belong
to the link, so that every link carries a complete graph $K_4$, which is geometrically a tetrahedron.
This graph has $2n$ vertices and $5n$ edges, and it is apparent
that the linkage of graph type has exactly the same mobility as the revolute loop. See Figure~\ref{fig:thumb} for an example
of a tetrahedral 6R loop.

But even though revolute loops may be considered as a subclass of linkages of graph type, it is advantageous to introduce new
techniques especially suited for them.
\end{rem}

\paragraph{4R Loops.}
The classification of mobile 4R loops is due to Delassus~\cite{Delassus}. He proved that there are three types of mobile 4R linkages:
\begin{description}
\item[planar:] all rotation axes are parallel. Essentially, this is a quadrilateral moving in the plane. The third coordinate is not changed
	in any of the moving links.
\item[spherical:] all rotation axes pass through a single point. Essentially, this is a moving spherical quadrilateral. The planar
	case may be considered as a limit case of the spherical case.
\item[skew isogram:] Bennett ~\cite{Bennett:14} discovered a mobile 4R linkage such that the axes of joints attached to the same link are
	skew, for all four links. We describe it below in more detail.
\end{description}

Let $L_1,\dots,L_n=L_0$ be the rotation axes of in some configuration of an $n$R loop. For $i=0,\dots,n-1$, we assume that the lines $L_i$
and $L_{i+1}$ belong to the $i$-th link. Since the link is assumed to be a rigid body, the normal distance $d_i$ and the angle $\alpha_i$ 
between $L_i$ and $L_{i+1}$ does not change as the linkage moves: they are invariant parameters. Assume that none of the angles is zero, i.e., 
$L_i$ and $L_{i+1}$ are not parallel. Then there is a unique line $N_i$ intersecting both $L_i$ and $L_{i+1}$ at a right angle. 
The distance $s_i$ between $N_i\cap L_i$ and $N_i\cap L_{i+1}$ is called offset. The angles, normal distances, and offsets are $3n$
invariant geometric parameters of the linkage; in robotics, they are called the {\em invariant Denavit-Hartenberg parameters}
\cite{DeHa}.
A configuration is determined by $n$ angles, and the $3n$ invariant Denavit-Hartenberg parameters together with the $n$ configuration
parameters determine the positions of the $n$ rotation axes and the position of the links uniquely up to $\SE_3$.
These $4n$ parameters fulfill a condition of codimension~6, called the {\em closure equation}: we attach an internal
coordinate system to each link, with the axis $L_i$ being the $x$ and the common normal $N_i$ being the $z$-axis. Then the transformation 
of the $i$-th coordinate system to the $(i+1)$-th coordinate system is the composition of the translation by a vector of length $d_i$ parallel
to the $z$-axis, the rotation around the $z$-axis by the angle $\alpha_i$, the translation by a vector of length $s_i$ parallel to the
$x$ axis, and a rotation around the $x$-axis determined by the $i$-th configuration parameter. The product of all these $4n$ direct
isometries is equal to the identity, and this statement gives the closure equation.

\begin{figure}[h]
\begin{center}
\includegraphics[height=4cm]{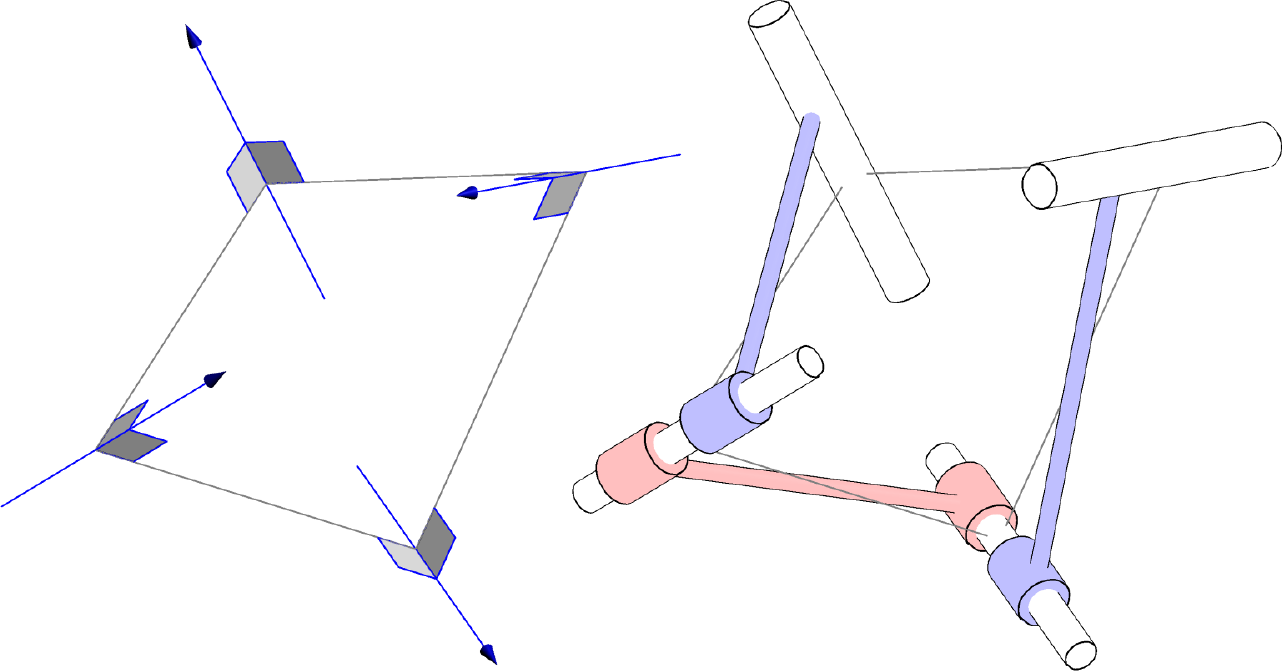}
\end{center}
\caption{The {\em skew isogram} is a mobile linkage of type 4R-loop with four rotation axes, so that axes in the same link are
	always skew. It is the only mobile 4R-loop which is neither planar (all axes are parallel) nor spherical (all axes are concurrent). 
	}
\label{fig:skewiso}
\end{figure}

A skew isogram is a 4R linkage such that the invariant Denavit-Hartenberg parameters $d_0,\dots,s_3$ satisfy the conditions
\begin{equation} \label{eq:bennett}
  d_1=d_3,d_0=d_2,\alpha_1=\alpha_3,\alpha_0=\alpha_2,\frac{d_1}{\sin(\alpha_1)}=\frac{d_0}{\sin(\alpha_0)}, s_0=s_1=s_2=s_3=0 .
\end{equation}

\paragraph{Dual Quaternions.}
In order to prove that the skew isogram is mobile, we use an algebraic way suggested by \cite{Study:03} to parametrize $\SE_3$.
The algebra $\D\H$ of dual quaternions is the 8-dimensional real vector space generated by $1,\qi,\qj,\qk,\eps,\eps\qi,\eps\qj,\eps\qk$.
Its multiplication is $\R$-linear, associative, the element $\eps$ -- the dual unit -- is central and satisfies $\eps^2=0$.
The symbols $\qi,\qj,\qk$ are Hamiltonian quaternions: $\qi^2=\qj^2=\qk^2=\qi\qj\qk=-1$. The center, generated by $1$ and $\eps$,
is called the algebra of dual numbers. Conjugation is a $\D$-linear antihomomorphism from $\D\H$ to itself: it maps $1$ to itself,
$\qi$ to $-\qi$, $\qj$ to $-\qj$, and $\qk$ to $-\qk$. For any dual quaternion $h\in\D\H$, the element $N(h):=h\overline{h}=\overline{h}h$
is a dual number, called the norm of $h$. The norm is a semigroup homomorphism with respect to multiplication; its image is the
subsemigroup consisting of all dual numbers with nonnegative primal part.

The set $\S$ of dual quaternions with norm in $\R^\ast$ is a multiplicative group. Its center is $\R^\ast$. The 
quotient group $\S/\R^\ast$ happens to be isomorphic to $\SE_3$. The isomorphism is determined by the action of
$\S/\R^\ast$ on $\R^3$. We may regard $\R^3$ as the abelian normal subgroup $T$ of $\S/\R^\ast$ of classes represented by
dual quaternions of the form $1+x\eps\qi+y\eps\qj+z\eps\qk$ (this subgroup is going to be the subgroup of translations
in $\SE_3$). The substitution of $\eps$ by $-\eps$ is an outer automorphism of $\S/\R^\ast$ of order~2 -- lets call it $\tau$ -- 
which fulfills the following property: 
if $h\in \S/\R^\ast$, then $h^{-1}\tau(h)\in T$. This implies that for all $h\in \S/\R^\ast$ and $v\in T$,
the element $h^{-1}v\tau(h)=(h^{-1}vh)(h^{-1}\tau(h))$ is in $T$, and this defines a right action of $\S/\R^\ast$ on $T$.
The bijections of $T$ in the image of this action are direct isometries, and this
defines a group isomorphism $\S/\R^\ast\cong\SE_3$. At the same time, we have constructed an embedding of $\SE_3$ into the projective space
$\P(\D\H)\cong\P^7$, as the subset defined by a quadratic form $S=0$, namely the dual part of the norm, and by a quadratic
inequation $N\ne 0$, namely the primal part of the norm.

There is a bijection between elements of order~2 in $\SE_3$ and lines in $\R^3$: every line corresponds to a {\em half turn} round that
line (a rotation by the angle $\pi$).
A point in $\SE_3\subset\P(\D\H)$ has order~2 if and only if its scalar part is zero. Here we have two linear equations,
namely the coefficient of $1$ and the coefficient of $\eps$, defining a $\P^5$ in $\P(\D\H)$. The intersection of this $\P^5$
with the quadric hypersurface defined by $S$ (a.k.a. the {\em Study quadric}) is isomorphic to the Pl\"ucker quadric, 
and the remaining six coefficients are the Pl\"ucker coordinates of lines.

Let $l\in\D\H$ be a dual quaternion representing an element of order~2 in $\SE_3$. Then $l^2=-N(l)$ is a negative real number; 
without loss of generality, we may assume $l^2=-1$. The line connecting $[1]$ and $[l]$ is contained in the Study quadric: its
elements are the rotations around the line $L$ corresponding to $l$. (Note that $[1]$ denotes the equivalence class of the
dual quaternion $1$ in $\P^7$ and does not indicate a reference to the bibliography.) 
These elements form a group; indeed, the vector space generated
by $1$ and $l$ is a subalgebra isomorphic to $\C$ over $\R$, and the projectivization of this two-dimensional real algebra is a Lie group
isomorphic to $\mathrm{SO}_2$. We call this group the {\em revolution} with axes $L$. A parametric representation of the revolution
is $(t+l)_t$, where the parameter $t$ ranges over the real projective line; the parameter $t=0$ corresponds to $[l]$, and the parameter $t=\infty$ 
corresponds to $[1]$. In general, the parameter $t$ corresponds to the cotangent of half of the rotation angle.

\begin{rem} \label{rem:revolution}
Conversely, assume that we have a line in $S$ passing through $[1]$. Then we can parametrize it by a linear polynomial in $t$ with
leading coefficient $1$, i.e., by a polynomial $(t+h)$ with $h\in S$. Because $N(t+h)=t^2+(h+\bar{h})t+N(h)$ has to be real for all $t\in\R$,
it follows that $h+\bar{h}\in\R$: the scalar part of $h$ is real (its dual part is zero). Then a reparametrization of the line 
is $(s+\frac{h-\bar{h}}{2})_s$, setting $s=t+\frac{h+\bar{h}}{2}$. This reparametrization shows that the line parametrizes a revolution
with axis corresponding to $[h-\bar{h}]$, except in the case when $N(h-\bar{h})=0$. In the exceptional case, the line will parametrize
a translation along a fixed direction.
\end{rem}

Let us now study conics passing through $[1]$ and contained in the Study quadric. Any such conic has a quadratic parametrization
$(t^2+at+b)_t$ where $a,b\in\D\H$. Does this quadric polynomial factor into two linear polynomials? And if yes, do the linear polynomials
parametrize revolutions? To answer these questions, we study 
$\D\H[t]$, the non-commutative algebra of univariate polynomials with coefficients in $\D\H$, where the variable
$t$ is supposed to be central, i.e., it commutes with the coefficients. 

\paragraph{Quaternion Polynomials.}
As a preparation, let us ask the analogous question for the non-commutative algebra $\H[t]$. We will show that
here, every polynomial can be written as a product of linear factors; in other words, the skew field of quaternions
is algebraically closed! The proof is taken from \cite{GordonMotzkin:65}.

\begin{lem}[polynomial division]\label{lem:euclid}
Let $A,B\in\H[t]$, $B\ne 0$. Then there exist unique polynomials $Q,R\in\H[t]$ such that $A=QB+R$ and either $\deg(R)<\deg(B)$ or $R=0$.
\end{lem}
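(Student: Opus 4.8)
The plan is to mimic the classical proof of polynomial division over a field, being careful about which side multiplication happens on. Since the variable $t$ is central in $\H[t]$, the degree of a product is additive, i.e.\ $\deg(PQ)=\deg(P)+\deg(Q)$ for nonzero $P,Q$, because the leading coefficient of $PQ$ is the product of the leading coefficients of $P$ and $Q$, and in the skew field $\H$ there are no zero divisors. This additivity is the one structural fact that makes everything go through.

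For \emph{existence}, I would argue by induction on $\deg(A)$. If $A=0$ or $\deg(A)<\deg(B)$, take $Q=0$, $R=A$. Otherwise write $A = a\,t^m + (\text{lower})$ and $B = b\,t^n + (\text{lower})$ with $m\ge n$ and $a,b\neq 0$. Since $b$ is invertible in $\H$, set $A' := A - (a b^{-1}\,t^{m-n})\,B$. The choice of the monomial $a b^{-1} t^{m-n}$ is designed precisely so that the $t^m$-terms cancel: multiplying $B$ on the left by $a b^{-1} t^{m-n}$ produces leading term $a b^{-1} b\, t^m = a\, t^m$ (here centrality of $t$ lets me slide the scalar past the power of $t$). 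Hence $\deg(A')<\deg(A)$, and by the induction hypothesis $A' = Q' B + R$ with $\deg(R)<\deg(B)$ or $R=0$; then $A = (a b^{-1} t^{m-n} + Q')B + R$ does the job. One must consistently use \emph{left} quotients throughout, matching the stated form $A=QB+R$.

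For \emph{uniqueness}, suppose $A = Q_1 B + R_1 = Q_2 B + R_2$ with each remainder of degree $<\deg(B)$ or zero. Then $(Q_1-Q_2)B = R_2 - R_1$. If $Q_1\neq Q_2$, the left side has degree $\deg(Q_1-Q_2)+\deg(B)\ge \deg(B)$ by additivity of degree, while the right side has degree $<\deg(B)$ (or is zero) --- a contradiction. Hence $Q_1=Q_2$ and then $R_1=R_2$.

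I do not expect a genuine obstacle here; the only thing to watch is discipline about sides: because $\H$ is noncommutative one cannot freely rewrite $b^{-1}b$ or commute coefficients, so every cancellation step must be written as left-multiplication of $B$ by an explicit monomial with scalar coefficient $a b^{-1}$, and the additivity-of-degree lemma (needing the absence of zero divisors in $\H$) must be invoked explicitly rather than taken for granted. With those two points handled, the standard Euclidean argument transfers essentially verbatim, and the same proof will later apply to $\D\H[t]$ only with the caveat that $\D\H$ has zero divisors, so a separate treatment --- restricting to polynomials whose leading coefficient is a unit, e.g.\ monic ones --- will be needed there.
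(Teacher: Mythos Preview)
Your proof is correct and follows essentially the same route as the paper: induction on $\deg(A)$ for existence (subtracting a left multiple of $B$ to kill the leading term) and a degree comparison for uniqueness. You are simply more explicit than the paper about the value $h=ab^{-1}$ of the leading coefficient and about invoking additivity of degree via the absence of zero divisors in $\H$.
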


\begin{proof}
We start with uniqueness. Assume $Q_1B+R_1=Q_2B+R_2$ for $\deg(R_1)<\deg(B)$ and $\deg(B_2)<\deg(R)$. We obtain $(Q_1-Q_2)B=R_2-R_1$. 
If the left side of this equation is not zero, then its degree is at least $\deg(B)$.
If the right side s not zero, then its degree is less than $\deg(B)$. Hence both sides must be zero.

For the existence, we proceed by induction on the degree of $A$: if $\deg(A)<\deg(B)$, then we set $Q:=0$ and $R:=A$. 
If $\deg(A)\ge\deg(B)$, then we can write $A=ht^{\deg(A)-\deg(B)}B+A'$ for
a suitable $h\in\H$ and $A'\in\H[t]$ with $\deg(A')<\deg(A)$. By induction, we get $A'=Q'B+R'$. 
But then we can set $Q:=Q+ht^{\deg(A)-\deg(B)}$ and $R:=R'$.
\end{proof}

If $\deg(B)=1$ in the Lemma~\ref{lem:euclid}, say $B=t-h$, then $R$ is a constant in $\H$. The constant is zero if and only
if $(t-h)$ is a right factor of $A$. If this is true, then we also say ``$h$ is a right zero of $A$''.
So, the questions is: does every polynomial $A$ of positive degree have a right zero? And maybe we are also
interested in the question how to find it.

A right zero of $A$ is also a right zero of the norm polynomial $N(A)=\bar{A}A$. We know that that the norm polynomial
is in $\R[t]$. It is also the sum of four squares -- if $A=A_0+A_1\qi+A_2\qj+A_3q_k$, then $N(A)=A_0^2+A_1^2+A_2^2+A_4^2$.
If $N(A)$ has a real zero $r$, then this real zero is also a zero of $A_0,A_2,A_2,A_3$; hence it is a zero of $A$, and we have found what we 
wanted to find.

What do we do if $N(A)$ has no real zeroes? In this case, we choose a quadratic irreducible factor $M\in\R[t]$.
By Lemma~\ref{lem:euclid}, there are $Q,R\in\H[t]$, with $\deg(R)<2$ or $R=0$, such that $A=QM+R$. We distinguish three cases.

\begin{enumerate}
\item If $R=0$, then $M$ is a right factor of $A$. Every right zero of $M$ is also a right zero of $A$. So it suffices to show
	that $M$ has a right zero. But we know that $M$ has a complex zero. So, assume that $z=a+\ci b$ is a complex zero of $M$,
	for some $a,b\in\R$, $b\ne 0$. Then we have the equation $M=(t-a-\ci b)(t-a+\ci b)$ between complex polynomials.
	But now we can replace the complex number $\ci$ by the dual quaternion $\qi$, which also fulfills the equation $\qi^2+1=0$
	It follows that $M=(t-a-\qi b)(t-a+\qi b)=0$, and $a-\qi b$ is a right zero of $M$ and also a right zero of $A$.
\item If $\deg(R)=1$, say $R=ut+v$ for suitable $u,v\in\H$, $u\ne 0$, then $h:=u^{-1}v$ is a right zero of $R$. Since
	\begin{equation} \label{eq:r}
	\overline{R}{R}=(\overline{P}-\overline{Q}M)(P-QM)=N(P)+M(-\overline{Q}P-\overline{P}Q+\overline{Q}QM) 
	\end{equation}
	is a multiple of $M$, and $\deg(\overline{R}{R})=\deg(M)=2$,
	it follows that $M$ is a left multiple of $R$. It follows that $h$ is right zero of $M$. Hence it is also a right zero of $A=QM+R$.
\item If $\deg(R)=0$, then Equation~\ref{eq:r} is self-contradictory: the right side is a multiple of $M$, and the left side
	is a nonzero constant. So, this case cannot occur.
\end{enumerate}

\begin{thm}
Every polynomial in $\H[t]$ can be written as a product of linear polynomials.
\end{thm}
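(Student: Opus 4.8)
The plan is to argue by induction on the degree, peeling off one linear right factor at a time. Essentially all of the substantive work has already been done in the case analysis preceding the statement: that discussion shows that every $A\in\H[t]$ of positive degree has a right zero $h\in\H$, which by definition means that $(t-h)$ is a right factor of $A$. So the proof of the theorem is just a matter of combining that fact with the division Lemma~\ref{lem:euclid} and iterating.

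First I would dispose of the base cases: a polynomial of degree $1$ is already linear, and a nonzero constant is the empty product (one should make this reading of ``product of linear polynomials'' explicit, since the statement is really about polynomials of positive degree). For the inductive step, let $A\in\H[t]$ have degree $n\ge 2$. By the discussion above, $A$ has a right zero $h$. Applying Lemma~\ref{lem:euclid} with $B=t-h$ gives $A=Q\,(t-h)+R$ with $R\in\H$ a constant (because $\deg B=1$), and since $h$ is a right zero of $A$ this forces $R=0$, so $A=Q\,(t-h)$. Because $t$ is central, the leading coefficient of $Q\,(t-h)$ equals the leading coefficient of $Q$, so $Q\ne 0$ and $\deg Q=n-1\ge 1$. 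By the induction hypothesis $Q$ is a product of linear polynomials, hence so is $A=Q\,(t-h)$.

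I do not expect any real obstacle: the only genuinely nontrivial ingredient --- the existence of a right zero --- is already available, resting on the fundamental theorem of algebra over $\C$ together with the trick of replacing $\ci$ by $\qi$ in the factorization of an irreducible real quadratic. The remaining points require only care, not ideas: one must check that the quotient is nonzero of degree exactly $n-1$ (immediate from centrality of $t$ and uniqueness in Lemma~\ref{lem:euclid}), and one must keep track of sides, since in the noncommutative ring $\H[t]$ a ``right zero'' yields a right factor and the factorization is therefore read off from right to left.
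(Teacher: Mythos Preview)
Your proposal is correct and follows exactly the approach of the paper, which simply says: given $A$ of positive degree, find a right zero $h$, write $A=A'(t-h)$, and iterate. Your write-up is a bit more careful about the base case and the degree of the quotient, but the argument is the same.
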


The proof is already clear: given $A$ of positive degree, we can find a right $h$, write $A=A'(t-h)$, and iterate.

How many distinct factorizations do there exist? Starting with one factorization, we may get infinitely many distinct factorizations
by multiplying with constants and their inverses in between the linear factors. In order to get rid of these ``essentially same''
factorizations, it suffices to assume that the polynomial $A$ and the linear factors are normed, i.e., they have leading
coefficient~1. 

If $A$ is a multiple of an irreducible real quadric $M$ (the first case in the above case
distinction), then $A$ has infinitely many right zeroes (see \cite{GordonMotzkin:65}). But if not, then the number of distinct
factorizations is finite. Indeed, the only non-deterministic step in the iterative procedure sketched above is the choice of the
sequence of irreducible factors used for factoring out the right zeroes. In particular, we have:

\begin{prop}
A normed polynomial of degree $d$ with generic coefficients has exactly $d!$ distinct factorizations into normed linear factors.
\end{prop}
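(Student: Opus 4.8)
The plan is to make precise the counting argument sketched just before the statement and to control genericity carefully. Fix the degree $d$ and work in $\H[t]$ with normed polynomials. Given a normed $A$ of degree $d$ whose norm polynomial $N(A)\in\R[t]$ has $d$ distinct irreducible quadratic factors $M_1,\dots,M_d$ (no real roots, no repeated factors), the factorization procedure from the proof of the preceding theorem shows that every factorization $A=(t-h_d)\cdots(t-h_1)$ into normed linear factors arises by choosing, at each of the $d$ steps, one of the still-available irreducible factors $M_i$ to ``factor out'' via its unique associated right zero. The first thing I would do is verify that for such generic $A$ this procedure produces exactly $d!$ factorizations and not fewer: different orderings of the $M_i$ must yield different factorizations.

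First I would set up the bookkeeping. For a normed $A$ with $N(A)=M_1\cdots M_d$, note that the set of right zeroes of $A$ is contained in the set of right zeroes of $N(A)$, hence each right zero $h$ satisfies some $M_i(h)=0$; conversely I claim that for generic $A$, each $M_i$ contributes exactly one right zero of $A$, obtained by the case analysis in the proof (the case $\deg R=1$): writing $A=QM_i+R$ with $R=u_it+v_i$, the zero is $h_i=u_i^{-1}v_i$, and $u_i\ne 0$ generically. So $A$ has exactly $d$ right zeroes. Factoring out $(t-h_i)$ gives $A=A_i\cdot(t-h_i)$ with $A_i$ normed of degree $d-1$ and $N(A_i)=N(A)/M_i=\prod_{j\ne i}M_j$. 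By induction on $d$, $A_i$ then has exactly $(d-1)!$ factorizations, each using the remaining factors $M_j$, $j\ne i$, in some order. Summing over the $d$ choices of $i$ gives at most $d\cdot(d-1)!=d!$ factorizations, and the key point is that no two of these coincide: a factorization $(t-h_d)\cdots(t-h_1)$ determines the sequence of norm polynomials $N(A)=\prod M, \ N(A_1)=\prod_{j\ne i_1}M_j,\dots$, hence determines the ordered sequence $(M_{i_1},\dots,M_{i_d})$ of which irreducible factor is peeled off at each stage; distinct orderings give distinct factorizations. This yields exactly $d!$.

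The remaining task is genericity: I must show that the conditions ``$N(A)$ has $d$ distinct roots, none real, and $u_i\ne 0$ at every stage of every branch'' define a Zariski-dense open subset of the space of normed degree-$d$ quaternion polynomials (an affine space of real dimension $4d$, or complexified $4d$ if one prefers to count complex solutions). The discriminant-type conditions on $N(A)$ (distinctness of the quadratic factors) are clearly open and nonempty. The condition that $N(A)$ has no real root is open but \emph{not} dense over $\R$ — however, for the purpose of counting complex factorizations and applying the cited genericity framework from the start of the paper, one passes to the complexification $\H\otimes_\R\C[t]$ or simply observes that the $d!$ count is what happens on a nonempty open set, which is all the statement claims. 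The conditions $u_i\ne 0$ along each branch are finitely many nonvanishing conditions on polynomials in the coefficients of $A$ (via the division algorithm, which is polynomial in the inputs), so their intersection is again open; nonemptiness follows by exhibiting one example, e.g. a product $(t-h_d)\cdots(t-h_1)$ of generic linear factors.

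The main obstacle I anticipate is the nonemptiness of the intersection of all the ``$u_i\ne0$'' conditions simultaneously — i.e., ruling out that at some branch of the recursion the remainder $R$ degenerates to a constant (the impossible third case) or, more subtly, that $u_i=0$ forcing us into the first case where $M_i$ is a genuine right \emph{factor} of $A$ and $A$ acquires infinitely many right zeroes. The cleanest way around this is to build the generic example from the top down: take $h_1,\dots,h_d\in\H$ generic (so that the pure-quaternion parts $h_i-\bar h_i$ are pairwise non-proportional and nonzero), set $A=(t-h_d)\cdots(t-h_1)$, and check that $N(A)=\prod_i (t-h_i)(t-\bar h_i)=\prod_i M_i$ has $d$ distinct irreducible quadratic factors, and that peeling off right zeroes never lands in the degenerate cases. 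This reduces everything to a single explicit nondegeneracy verification, after which openness does the rest and the induction closes the argument.
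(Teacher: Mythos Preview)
Your proposal is correct and follows essentially the same approach as the paper: the paper's argument (given informally in the paragraphs surrounding the proposition rather than in a formal proof) is precisely that the only nondeterministic choice in the iterative right-zero procedure is the ordering of the $d$ irreducible quadratic factors of $N(A)$, yielding $d!$ factorizations which are distinct because each factorization records, via the norms of its linear factors, which $M_i$ was peeled off at each step. Your treatment is more careful about genericity than the paper's sketch; one small simplification you could make is to note that for a norm polynomial $N(A)=A_0^2+A_1^2+A_2^2+A_3^2$, a real root forces $(t-r)^2\mid N(A)$, so squarefreeness of $N(A)$ already excludes real roots and your separate worry about ``no real roots'' not being Zariski-dense over~$\R$ is unnecessary.
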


The comparison with polynomial factorization in $\C[t]$ is illuminating: there, the factorization is unique. But if we consider
two factorizations which differ only by the order of the factors as being distinct, then we have again $d!$ distinct factorizations.
In the case of $\H[t]$, permutation of factors would not lead to the same product, because $\H[t]$ is not commutative; hence
permutation is not a method to get more factorizations, and all $d!$ factorizations are different.

\paragraph{Mobility of the Skew Isogram.}
Feeling well prepared? Then, let us go back to polynomials over the dual quaternions. Can we write every polynomial in $\D\H[t]$ that
parametrizes a curve in the Study quadric into a linear factors parametrizing revolutions?

Let us assume that we have given such a polynomial $P\in\D\H[t]$. 
We can try to copy the factorization strategy that worked in $\H[t]$: factorize the norm polynomial $N(P)$, choose a quadratic irreducible 
factor $M$ (lets assume that $N(P)$ has no real zeroes for now), compute the remainder of $P$ modulo $M$; if this remainder is 
a linear polynomial $R=ut+v$ for some $u,v\in\D\H$, compute a right zero $h:=u^{-1}v$, factor out $(t-h)$ from the right, and iterate. 
This is going to work for generic coefficients. Moreover, since $N(P(t_0))$ is in $\R$ (and not in $\D\setminus\R$) for all $t_0\in\R$, 
the norm polynomial $N(P)$ is in $\R[t]$. Therefore it has a factorization into irreducible factors in $M_r\in\R[t]$, $r=1,\dots,\deg(P)$.
The right factors $(t-h_r)$ produced by our strategy satisfy the equation $(t-h_r)(t+h_r)=M_r$, so by Remark~\ref{rem:revolution},
the linear factor will generically parametrize a revolution. So, at least generically, everything is fine!

The application of our strategy leads to the following characterization of skew isograms. It was first found in \cite{BHS}
by different methods.

\begin{thm} \label{thm:bhs}
For a generic conic in the Study quadric passing through $[1]$, there is a skew isogram such that the conic parametrizes the motion
of the second link. (In particular, this skew isogram is mobile.)
\end{thm}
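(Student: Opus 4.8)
The plan is to run the factorization strategy described above on the given conic: split its monic quadratic parametrization over $\D\H$ into two linear factors, recognize each factor as a revolution, and realize the two revolution axes --- together with two further axes forced by the Bennett symmetry --- as the four axes of a skew isogram whose second link moves exactly along the given conic.

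First I would fix a monic parametrization $P(t) = t^2 + at + b \in \D\H[t]$ of the conic, so that $P(\infty) = [1]$ and $[P(t_0)]$ sweeps the conic as $t_0$ runs over $\R$. Since the conic lies on the Study quadric, the dual part of $N(P(t_0))$ vanishes for every real $t_0$, so $N(P) \in \R[t]$ is monic of degree $4$; for a generic conic it has no real root and splits as $N(P) = M_1 M_2$ with $M_1, M_2 \in \R[t]$ monic irreducible quadratics. I then divide, $P = Q M_1 + R$; for a generic conic the remainder $R = ut+v$ is genuinely linear with $u$ invertible in $\D\H$ (the primal part of $N(u)$ is nonzero). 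Because $M_1$ has real, hence central, coefficients, the identity behind Equation~\eqref{eq:r} applies verbatim and gives $\overline{R}R = N(u)\,M_1$; thus $M_1$ has $R$ as a right factor, so the right zero $h_2$ of $R$ is a right zero of $M_1$ and hence of $P$. Factoring out the corresponding linear factor and repeating once produces $P = (t-h_1)(t-h_2)$ in $\D\H[t]$.

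Next I would check that the two linear factors parametrize revolutions. Multiplicativity of the norm on $\D\H[t]$ gives $M_1 M_2 = N(P) = N(t-h_1)\,N(t-h_2)$, and comparing degrees forces each $N(t-h_i)$ to equal one of the real quadratics $M_j$; hence both factors are lines through $[1]$ lying on the Study quadric, and by Remark~\ref{rem:revolution} each parametrizes a revolution --- the exceptional translation case $N(h_i - \bar h_i) = 0$ being excluded for a generic conic --- say around lines $L_1$ and $L_2$. Then $P(t)$ is exactly the relative motion of the link two joints away from the base in a chain whose first two joint axes are $L_1, L_2$, the two joint parameters being, after the reparametrization of Remark~\ref{rem:revolution}, linked by a constant shift.

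It remains to close this $2$R chain into a $4$R loop and to check that it is a skew isogram. Given $L_1$ and $L_2$, the Bennett conditions \eqref{eq:bennett} determine the missing axes $L_0$ on the base and $L_3$ on the moving link up to one free parameter --- one still has to impose $d_0/\sin\alpha_0 = d_1/\sin\alpha_1$ --- and that parameter is fixed by requiring the closed-loop motion to reproduce $P$; the symmetry of the construction forces $d_0 = d_2$, $d_1 = d_3$, $\alpha_0 = \alpha_2$, $\alpha_1 = \alpha_3$ and all offsets $s_i = 0$, and the loop is then mobile by the classical mobility of the Bennett mechanism. I expect this synthesis step --- producing the two missing axes and verifying that the closure holds identically in $t$, so that the abstract ``product of two coupled revolutions'' is genuinely the motion of a link in a closed Bennett loop --- to be the main obstacle and the place where \eqref{eq:bennett} enters essentially; the last relation in \eqref{eq:bennett} should come out as exactly the compatibility encoded by the splitting $N(P) = M_1 M_2$ into two real quadratics. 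Everything preceding it is routine non-commutative polynomial arithmetic, valid on a dense open set of conics: genericity is used only to keep $N(P)$ free of real roots, to keep the division remainder linear with invertible leading coefficient, and to keep the two factors from degenerating into translations.
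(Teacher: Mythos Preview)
Your factorization of $P$ into $(t-h_1)(t-h_2)$ and the identification of two revolution axes is correct. The gap is exactly where you suspect it: the ``synthesis step'' closing the $2$R chain into a $4$R loop. You propose to determine the two missing axes by imposing the Bennett conditions \eqref{eq:bennett}, but this inverts the logic --- those conditions are the \emph{conclusion} to be established, not an input to the construction --- and more concretely you give no argument that any choice of $L_0,L_3$ satisfying \eqref{eq:bennett} produces a loop whose second link actually moves along $P$. Saying that ``the parameter is fixed by requiring the closed-loop motion to reproduce $P$'' begs the question of why such a parameter value exists at all.

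The missing idea, which the paper supplies, is that $P$ has a \emph{second} factorization: run the same division procedure with $M_2$ in place of $M_1$ to obtain $P=(t-r_2)(t-w_2)$ with $N(t-w_2)=M_2$, and hence two further revolution axes. With four axes in hand the loop closes for free --- the motion of the second link computed along either side of the quadrilateral is the same product $P(t)=(t-r_1)(t-w_1)=(t-r_2)(t-w_2)$. Only \emph{afterwards} does one compute the Denavit--Hartenberg parameters of these four concrete axes and observe that they satisfy \eqref{eq:bennett}; that is a verification, not a construction principle. The second factorization thus replaces your entire synthesis step.
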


\begin{proof}
Let $P=t^2+at+b$ be a quadratic parametrization of the conic, with $a,b\in\D\H$. The norm $N(P)$ is a real polynomial that has only
nonnegative values. By genericity, it has no double zeroes, and can be written as a product $M_1M_2$ of two distinct quadratic
irreducible factors. For $i=1,2$, we construct as above a factorization $P=(t-r_i)(t-w_i)$ such that $N(t-w_i)=M_i$. (It follows
that $N(t-r_i)=N(t-w_{2-i})$, for $i=1,2$.)

The linear polynomials $t-r_1,t-w_1,t-r_2,t-w_2$ parametrize lines on the Study quadric. Each of them corresponds to a subgroup of
rotations around a line in $\R^3$. Let $L_1,K_1,L_2,K_2$ be these four lines, respectively. We construct a mobile 4R loop as follows:
the base link contains the lines $L_1$ and $L_2$, the first link contains the lines $L_1$ and $K_1$, the second link contains the
lines $K_1$ and $K_2$, and the third link contains the lines $L_2$ and $K_2$. For each $t\in (\R\cup\{\infty\})$, we get a configuration
of the 4R loop: the relative displacement of the first link with respect to the base link is the rotation $t-r_1$, the relative
displacement of the third link with respect to the base link is the rotation $t-r_2$, the relative motion of the second link with respect
to the first link is the rotation $t-w_1$, and relative motion of the second link with respect to the third link is the rotation $t-w_2$.
The relative position of the second link with respect to the base link can be computed in two ways, via the first link or via
the third link. In both ways, the result is $(t-r_1)(t-w_1)=(t-r_2)(t-w_2)=P$.

Once the lines are constructed, it is straightforward to compute the invariant Denavit-Hartenberg parameters of the 4R loop -- we omit this
calculation. The result are exactly the equations~\ref{eq:bennett}. It follows that the 4R loop is a skew isogram.
\end{proof}

The paper \cite{BHS} also contains the converse statement: for any skew isogram, the relative motion of two links that are not connected
by a joint is parametrized by a conic curve on the Study quadric that passes through $[1]$. In \cite{hss1}, factorizations of cubic
polynomials in $\D\H[t]$ are used to construct paradoxically moving 5R loops and 6R loops.

\paragraph{Drawing Rational Curves.}
It is time to lift the veil of the mystery about the ellipse circle shown in Figure~\ref{fig:ell}. 
This example is taken from \cite{Schicho:16c}, which contains a construction of a linkage that draws a rational plane curve.
In \cite{Schicho:18a}, the construction is extended to rational space curves.
An online illustration with several examples can be found in \url{http://www.koutschan.de/data/link/}.

The ellipse with implicit equations $\frac{(x+a)^2}{a^2}+\frac{y^2}{b^2}=z=0$ has a rational parametrization
\[ (x,y,z) = p(t) := \left( \frac{-2a}{t^2+1},\frac{2bt}{t^2+1},0 \right) . \]
For any $t\in\R$, the dual quaternion $1+\eps(\frac{-a}{t^2+1}\qi+\frac{bt}{t^2+1}\qj)$ represents a translation that maps the origin to $p(t)$.
The class of a dual quaternion is not changed when we multiply it with $t^2+1$.
So we set $P:=t^2+1+\eps(-a\qi+bt\qj)$ and try to factorize. The norm polynomial is $(t^2+1)^2$, hence our only choice of an irreducible
factor is $M=t^2+1$. The remainder of $P$ modulo $M$ is $R=\eps(-a\qi+bt\qj)$. But now something is wrong: even though $R$ has a right zero,
namely $h=-\frac{b}{a}\qk$, there is no common zero $R$ and $M$ except in the case $a=\pm b$. (If $a=\pm b$, then the ellipse is
a circle, and we are not interested.) The argument we used in the quaternion case fails because $N(R)=0$.

There is a way out: instead of factorizing $P$, we can factorize $Q:=(t-\qi)P$. The displacement $[t-\qi]$ fixes the origin, hence
the displacement $[Q(t)]$ maps the origin to the point $p(t)$, just like the translation $[P(t)]$. The remainder of $Q$ modulo $M$ is
$\eps(b-a)(\qi t-\qj)$, and this time we do have a common right zero of $M$ and $R$! Any dual quaternion of the form $-k-\epsilon(c\qj+d\qj)$
is fine. For simplicity, we set $d=0$. Now we can factor $(t+k+\epsilon c\qj)$ from the right and proceed. 
The final result is
\[ Q = (t-\qk+(a/2+b/2)\eps\qj)(t-\qk+(-a/2+b/2-c)\eps\qj)(t+\qk+\epsilon c\qj)=(t-h_1)(t-h_2)(t-h_3) \]
(we leave the remaining steps as an exercise -- they are not problematic and give a unique result).

In order to construct a linkage with mobility one, we could use another factorization with a different linear factor on the left. 
But such a factorization does not exist: the norm polynomial of $Q$ is $(t^2+1)^3$, so there is no choice of choosing different factors
of the norm polynomial. We need to mix a different quadratic irreducible polynomial into our soup.

Let $d\in\R$ and define $h_0:=2\qk+d\eps\qj$. The polynomial $(t-h_0)(t-h_1)$ has exactly two factorizations -- one we know already,
the second one is $(t-h_4)(t-h_5)$, for some $h_4,h_5\in\D\H$. Then the polynomial $(t-h_5)(t-h_2)$ also has exactly two factorizations,
and we can define two more dual quaternions such that the second factorization is $(t-h_6)(t-h_7)$. Finally, let $h_8,h_9\in\D\H$ such that
$(t-h_7)(t-h_3)=(t-h_8)(t-h_9)$. The different factorizations giving the same result correspond to paths in the directed graph $G$
in Figure~\ref{fig:dia} with equal starting and ending vertex.

\begin{figure}[h]
\begin{center}
\[   \xymatrix{
     \circled{1} \ar^{t-h_1}[rr] & & \circled{2}  \ar^{t-h_2}[rr] & & \circled{3}  \ar^{t-h_3}[rr] && \circled{4} \\
     \circled{5} \ar^{t-h_4}[rr] \ar^{t-h_0}[u] && \circled{6} \ar^{t-h_6}[rr] \ar^{t-h_5}[u] & &
		\circled{7}  \ar^{t-h_8}[rr] \ar^{t-h_7}[u] && \circled{8} \ar^{t-h_9}[u]
   } \]
\caption{
This graph displayes different factorizations of equal products in the polynomial ring of dual quaternions. For any two directed
path between two vertices, the two products of the linear polynomials appearing as edge labels in each path are equal.
The dual quaternions $h_0,\dots,h_9$ are defined as follows: 
$h_0=2\qk+d\eps\qj$, $h_1=\qk-(a/2+b/2)\eps\qj$, $h_2=\qk-(-a/2+b/2-c)\eps\qj$, $h_3=-\qk-\epsilon c\qj$,
$h_4=\qk+\frac{a+b+4d}{6}\eps\qj$, $h_5=2\qk+\frac{-2a-2b+d}{3}\eps\qj$, $h_6=\qk+\frac{-11a-5b-6c+4d}{18}\eps\qj$,
$h_7=2\qk+\frac{4a-8b+12c+d}\eps\qj$, $h_8=-\qk+\frac{-8a+16b+3c-2d}{9}\eps\qj$, $h_9=2\qk+\frac{4a-8b+d}{3}\eps\qj$.
Here, $a,b,c,d$ are arbitrary real constants.
	}
\label{fig:dia}
\end{center}
\end{figure}
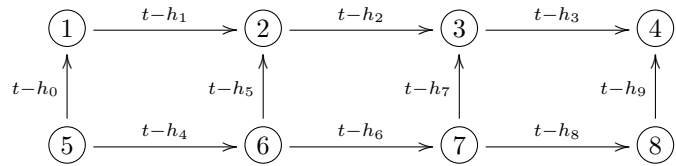

The ellipse circle consists now of eight links corresponding to the eight vertices of $G$. Two links are connected by a joint if and only
if the vertices are connected by an edge. The label of the edge -- a linear polynmial in $\D\H$ -- parametrizes the relative position of
the target link with respect to the source link. As $t$ varies, the linear polynomials parametrize a revolution. Therefore, the two links
are connected by an R-joint. Now we fix the link corresponding to vertex~4. Then the relative motion of the link corresponding to
vertex~1 maps the origin to the point $p(t)$ on the ellipse.

Note that $b=0$ is allowed; in this case, the ellipse degenerates to a line segment traced twice, and we have constructed a linkage
that draws this line.

%This has potential applications: for an arbitrary desired motion, this approach makes it possible to construct a linkage that traces this motion,
%as long as it is a rational curve in the group of direct isometries. In \cite{Schicho:16c}, a linkage is constructed that draws
%a given planar rational curve; the ellipse circle in Figure~\ref{fig:ell} is an instance of this construction. And a construction
%of a spatial linkage following a spatial motion can be found in \cite{Schicho:18a}.

\section{Symmetry} \label{sec:sym}

The second construction by Dixon of a moving $K_{4,4}$ is symmetric. Indeed, symmetry may change the counting rules and
can sometimes be the explanation of paradoxical mobility. We discuss here two cases in more detail: line symmetry and plane
symmetry. Both cases appeared in Bricard's families of moving octahedra in ~\cite{Bricard:97}. 
Schulze~\cite{Schulze:10} was the first to describe paradoxical moving symmetric graphs systematically, in every dimension.

\paragraph{Line Symmetry.} \label{ss:linesym}
We assume that we have a graph $G=(V,E)$ such that $|E|=3|V|-6$, and an assignment $(\lambda)_{e\in E}$ of a positive real
number for each edge. Generically, the configuration set, i.e., the set of all maps $V\to\R^3$ respecting edge lengths modulo $\SE_3$,
is finite: we have $3|V|-6$ variables and $|E|$ equations. Let us now assume that 
we have a graph automorphism $\tau:V\to V$ that preserves the edge assignment. Assume also that $\tau$ has order 2,
does not fix a vertex, and does not fix an edge -- a priori, an edge could be fixed if $\tau$ permutes its two vertices. 
Then $|V|$ consists of $n:=\frac{|V|}{2}$ pairs of conjugated vertices, and $E$ consists of $3n-3$ pairs of conjugated edges.
In order to construct line symmetric configurations, we fix a line $L\subset\R^3$; let $\sigma:\R^3\to\R^3$ be the rotation
around $L$ by $\pi$. For any conjugated pair $(v,\tau(v))$ of vertices, we pick one point $p_v$ anywhere in $\R^3$; the second
point is determined by $p_{\tau(v)}:=\sigma(p_v)$. The number of variables to specify all points is $3v$. There is also a two-dimensional
subgroup of $\SE_3$ fixing $L$, generated by rotations around $L$ and translations into the direction of $L$. We use two
of the variables to get a canonical representative. Hence the number of variables to specify an equivalence class of
configurations is $3n-2$. The number of equations is equal to the number of pairs of conjugated edges, which is §$3n-3$,
because conjugated edges always have the same length. Hence the expected mobility is one.

The smallest line symmetric moving graph is the 1-skeleton of an octahedron, with 6 vertices and 12 edges. The group of graph
automorphisms is isomorphic to the Euclidean symmetry group of a regular octahedron, which has 48 elements. There is a unique automorphism
of order 2 without fixed vertex and fixed edge, corresponding to the point reflection of the regular octahedron. The
construction applies, and we get a moving line symmetric octahedron (see Figure~\ref{fig:oct} left side). 
Bricard~\cite{Bricard} proved that there are three types of moving octahedra, and the line symmetric is one of the three.

\begin{figure}[h]
\begin{center} 
\includegraphics[width=5cm]{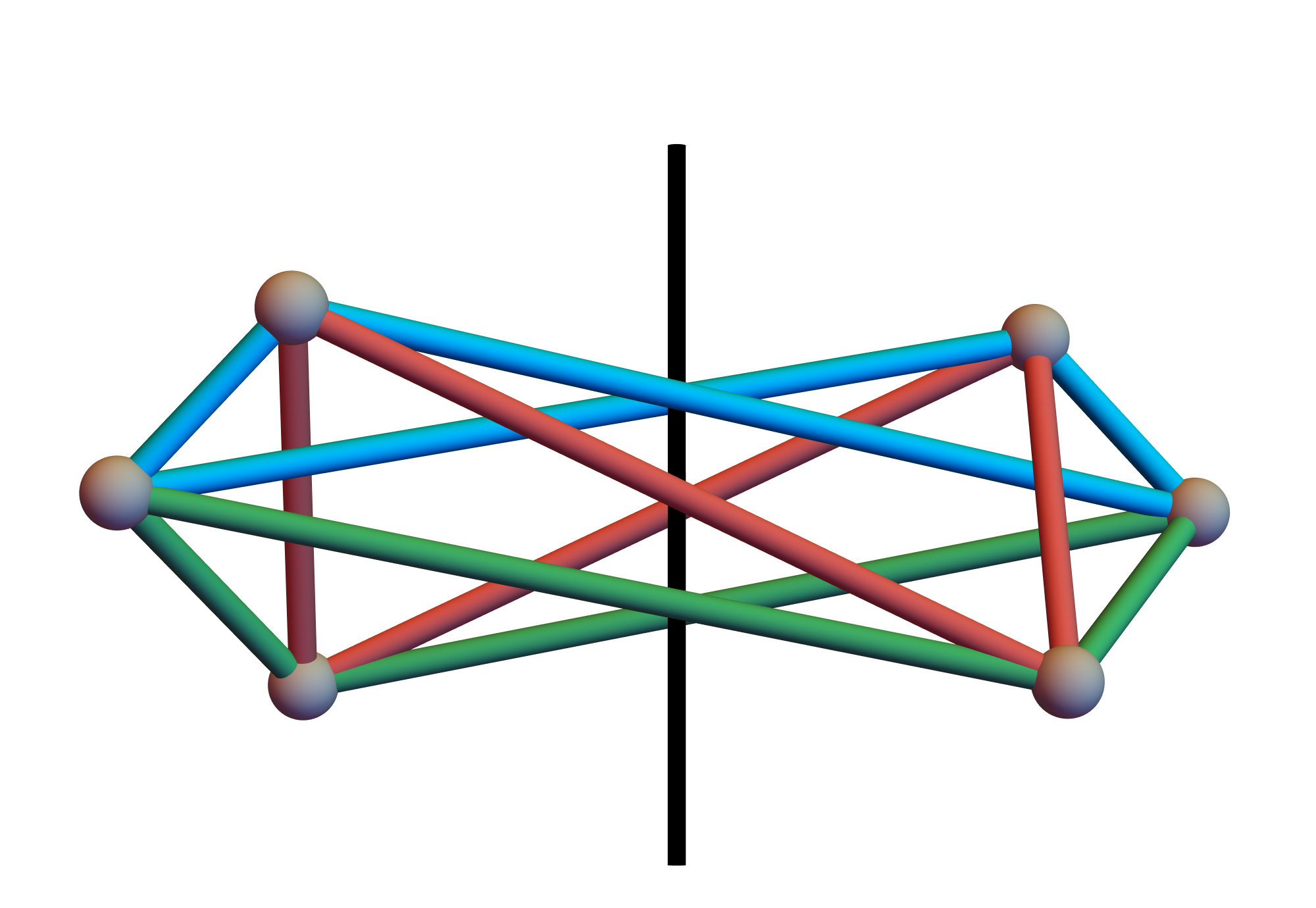}
\hspace{1cm}
\includegraphics[width=5cm]{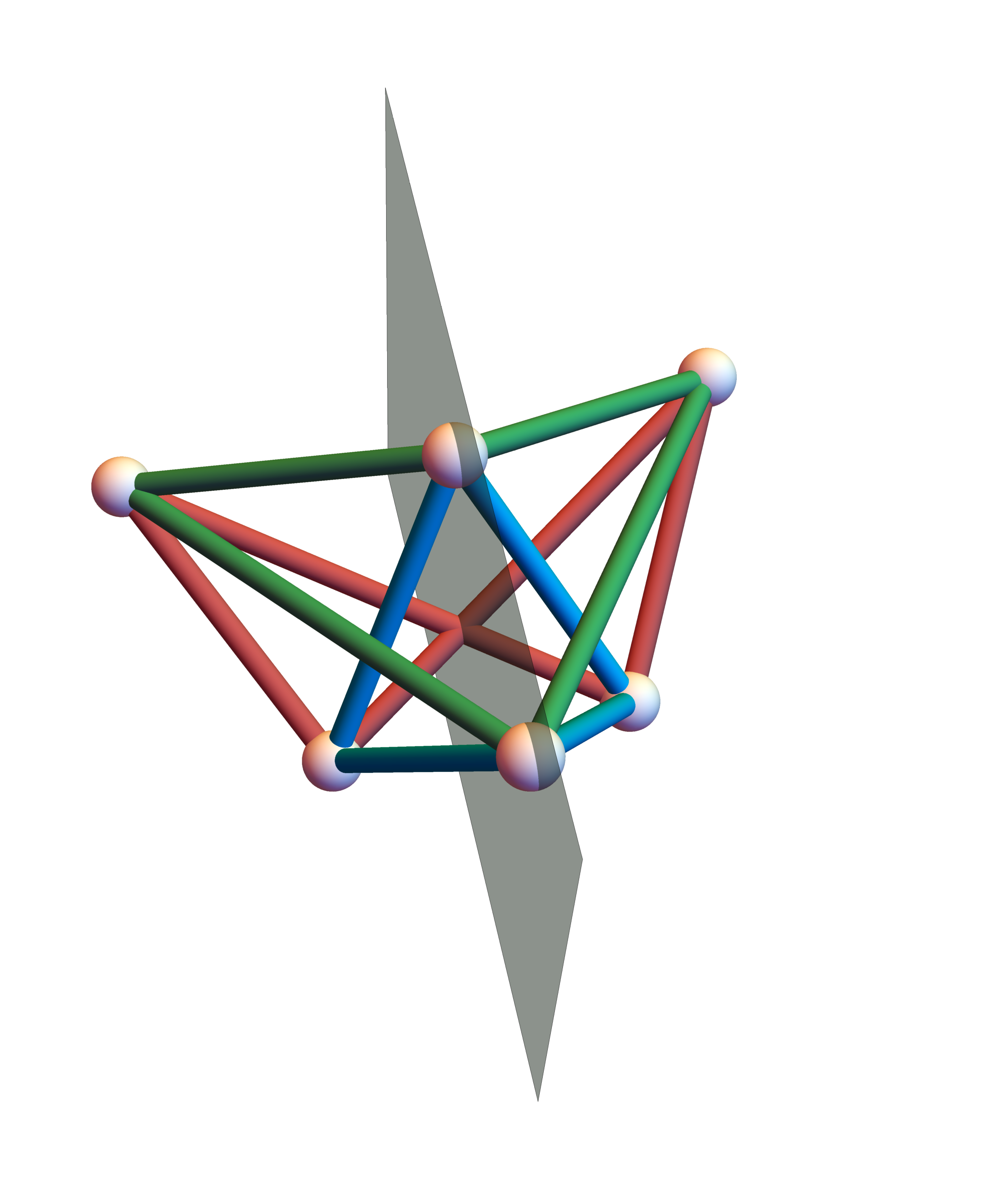}
\end{center}
\caption{Left side: a flexible octahedron that is symmetric by a line reflection; 
right side: a flexible octahedron that is symmetric by a plane reflection. 
Corresponding edges are shown in the same color.}
\label{fig:oct}
\end{figure}

More generally, we can take any centrally symmetric convex polyhedron $\Gamma$ with only triangular faces and choose as a graph 
$G=(V,E)$ its 1-skeleton. By Euler's formula, the number of edges is $3|V|-6$. The point reflection acting on $\Gamma$ defines
an automorphism of the graph which satisfies the required properties: order~2, no fixed vertex, no fixed edge. The construction
applies, and we get, for instance, a line symmetric moving icosahedron with 12 vertices and 30 edges. 

\begin{rem} \label{rem:sym}
Be careful: the point symmetry defines only the graph automorphism!
It is geometrically different from the line symmetry in all configurations we allow.
Point symmetric configurations do also exist, but only finitely many.
\end{rem}

Another classical example is Bricard's line symmetric 6R loop. Any 6R loop consists of 6 links, cyclically connected by revolute
joints that allow rotations around an axes which is common to the two attached links; generically, a 6R loop is rigid. In the line
symmetric case, we assume that the 18 invariant Denavit-Hartenberg parameters $d_0,\dots,s_5$ satisfy 
\[ d_i=d_{i+3}, \alpha_i=\alpha_{i+3}, s_i=s_{i+3} \mbox{ for } i=0,1,2 ,\]
and we are only looking for configurations such that there exists a half turn mapping the $i$-th link to the $(i+3)$-rd link.

Recall that configurations can be found by solving the closure equation (see Section~\ref{sec:rl}): 
we attach an internal coordinate system to each link and parametrize the transformation $T_i$ from the $i$-th link to the $(i+1)$-th link
(where the 6-th link is the 0-th link) by the $i$-th configuration parameter $\phi_i$. 
As mentioned above, $T_i(\phi_i)$ is the composition of the translation by a vector of length $d_i$ parallel
to the $z$-axis, the rotation around the $z$-axis by the angle $\alpha_i$, the translation by a vector of length $s_i$ parallel to the
$x$ axis, and a rotation around the $x$-axis determined by the $i$-th configuration parameter $\phi_i$.
The configuration set is the set of solutions $(\phi_0,\dots,\phi_5)$ of the closure equation 
\[ T_0(\phi_0)T_1(\phi_1)T_2(\phi_2)T_3(\phi_3)T_4(\phi_4)T_5(\phi_5) = e , \]
where $e$ is the identity of the group $\SE_3$. The functions $T_0,\dots,T_5$ depend on the invariant Denavit-Hartenberg parameters,
and as a consequence we have $T_0=T_3$, $T_1=T_4$, and $T_2=T_5$. Recall that the closure equation is a codimension~6 condition, 
because $\SE_3$ is a six-dimensional group, hence the CGK-formula estimates that there are only finitely many solutions.

Since we are only interested in line symmetric configurations, we assume $\phi_0=\phi_3$, $\phi_1=\phi_4$, and $\phi_2=\phi_5$.
The closure equation reduces to
\[ \left(T_0(\phi_0)T_1(\phi_1)T_2(\phi_2)\right)^2 = e . \]
We ignore the solutions of $T_0(\phi_0)T_1(\phi_1)T_2(\phi_2)=e$ (these are at most finitely many).
This means, we search for configuration parameters such that the transformation of the coordinate system of the 0-th link 
to the coordinate system of the 3rd link is a half turn. This is a codimension~2 condition: as we have already mentioned
in Section~\ref{sec:rl}, the set of involutions in $\SE_3$ is a 4-dimensional manifold.
Hence there is a one-dimensional set of line symmetric configurations generically.

\begin{rem} \label{rem:ls}
Is there a good reason to explain the mobility of a line symmetric linkage by the closure equation, instead of just considering them
as special cases of line symmetric linkages of graph type, as in Remark~\ref{rem:red}. Here is one: we may replace some of the
revolute joints by other types of joints, like prismatic joints, as in hydraulic telescopes, or helical joints, as commonly
seen in the form of screws. In both cases, such a joint allows a one-parameter subgroup of displacements of the connected links,
and exactly the same proof of mobility is valid. 
On the other hand, a loop with helical joints cannot be considered as a linkage of graph type, because its closure equation 
is not even algebraic.
\end{rem}

Yet another classical example, the line symmetric Stewart platform, will be explained in Section~\ref{sec:pod}. 

\paragraph{Plane Symmetry.}

Plane reflections are involutions in the group $\mathrm{E}_3$ of isometries reversing the orientation. They are of course not direct
isometries, but they still may be responsible for paradoxical
mobility of various types of linkages, similar to half turns in the case of line symmetric linkages. Let us start with 6R loops.
In a plane symmetric configuration of a 6R loop, there exists a plane reflection mapping link~0 to link~5, 
link~1 to link~4, and link~2 to link~3. The existence of a plane symmetric configuration has the following
implications on the invariant Denavit-Hartenberg parameters:
\[ d_0=d_5, d_1=d_4, d_2=d_3, \alpha_0=\alpha_5, \alpha_1=\alpha_4, \alpha_2=\alpha_3, s_1=-s_0, s_2=-s_5, s_0=s_3=0 . \]
The relations between the functions in the closure equations are the following:
\[ RT_0(\phi_0)R=T_0(-\phi_0), RT_1(\phi_1)R=T_5(-\phi_1), RT_2(\phi_2)R=T_4(-\phi_2), RT_3(\phi_3)R=T_3(-\phi_3), \]
where $R$ is the reflection by the coordinate plane $\Pi$ spanned by the first and second axes.
Instead of solving the closure equation, we find all quadruples $(\phi_0,\phi_1,\phi_2,\phi_3)$ such that
$RXR=X$, where $X:=T_0(\phi_0)T_1(\phi_1)T_2(\phi_2)T_3(\phi_3)$. An element $X\in\SE_3$ fulfills the equation $RXR=X$
if and only if it is a rotation with an axis orthogonal to $\Pi$ or a translation by a vector in $\Pi$. These rotations and translations
form a manifold of dimension~3 (isomorphic to $\SE_2$), hence the condition above is
a codimension $6-3=3$ condition. In general, there is a one-dimensional set of solutions.

For every solution $(\phi_0,\phi_1,\phi_2,\phi_3)$ of $RXR=X$, the six-tuple $(2\phi_0,\phi_1,\phi_2,2\phi_3,\phi_2,\phi_1)$ is a
solution of the closure equation:
\[ T_0(2\phi_0)T_1(\phi_1)T_2(\phi_2)T_3(2\phi_3)T_4(-\phi_2)T_5(-\phi_1) =T_0(\phi_0)XT_3(\phi_3)RT_2(-\phi_2)T_1(-\phi_1)R = \]
\[  RT_0(-\phi_0)RXRT_3(-\phi_3)T_2(-\phi_2)T_1(-\phi_1)R = RT_0(-\phi_0)RXRX^{-1}T_0(\phi_0)R = e . \]
Hence we again get a mobile 6R loop, also known as ``Bricard's plane symmetric 6R linkage''.

\begin{rem} \label{rem:ps}
As in Remark~\ref{rem:ls}, we may replace some of the revolute joints by prismatic or helical joints -- see \cite{Baker:97}.
Care has to be taken for the special role of the 0-th joint and the 3rd joint, because these two joints are supposed
to be mapped to their own inverse by the plane symmetry. This is not possible at all for helical joints. Prismatic joints
are fine, but the direction vector has to be perpendicular to the symmetry plane and not parallel to it.
\end{rem}

For linkages of graph type, there is also a construction of plane symmetric linkages that are paradoxically mobile.
We assume that we have a graph $(V,E)$ such that is generically rigid and satisfies $|E|=3|V|-6$,
for instance the 1-skeleton of a convex polyhedron with triangular faces. Assume that we have a graph automorphism $\tau:V\to V$ of order~2
that fixes $2m$ vertices and $2m-2$ edges, for some $m\ge 1$.
Choose a generic edge assignment that respects the involutive symmetry. Fix a plane $\Pi$ in $\R^3$, and let $R:\R^3\to\R^3$
be the reflection at $\Pi$.
A configuration $(p_v)_{v\in V}$ is symmetric with respect to the plane $\Pi$ if and only if $R(p_v)=p_{\tau(v)}$ holds
for all $v\in V$. 
The number of indeterminates is $3\frac{|V|-2m}{2}+4m-3=\frac{3}{2}|V|+m-3$: for each 2-orbit in $V$, the realization 
is determined by 3 indeterminates, and for each fixed point, we have two indeterminates because the point must lie in $\Pi$. 
The symmetry group of the plane has dimension~3, which reduces the number of indeterminates of equivalence classes by 3. 
The number of equations is $\frac{|E|-2m+2}{2}+2m-2=\frac{3|V|-2m-4}{2}+2m-2=\frac{3}{2}|V|+m-4$. Again, we obtain a paradoxically mobile graph.

So, how do we find graphs with an automorphism of order~2 fixing $2m$ vertices and $2m-2$ edges? Say, the graph is the
1-skeleton of a convex polyhedron $\Gamma$ with triangular faces. If $\Gamma$ is symmetric with respect to the half turn around
a line passing through 2 vertices, then we get a involution with 2 fixed points and no fixed edge, so that $m=1$. This works,
for example, for the octahedron -- see Figure~\ref{fig:oct} right side -- and for the icosahedron. 

\begin{rem}
In the construction above, there are two geometric symmetries playing entirely different roles: The line symmetry of the convex polyhedron 
defines a graph automorphism of order~2 with the right properties; the plane symmetry defines a condition on the configurations that
we consider. See also Remark~\ref{rem:sym}.
\end{rem}

Here is an example of a generically rigid graph with 12 vertices and 30 edges and with an automorphism of order~2 that fixes
$4$ vertices and $2$ edges: take a 6R loop and construct a graph as in Remark~\ref{rem:red}, by putting two vertices on each of the four rotation axes.
In this case, the plane symmetric construction just reconstructs plane symmetric 6R loops, which we already did in another way.

\section{Multipods and Group-Leg Duality}
\label{sec:pod}

The Prix Vaillant 1904 asked for curves in the Lie group $\mathrm{SE}_3$ of direct isometries such that ``many'' points in $\R^3$
move on spheres. Connecting the moving points by sticks with the midpoints of these spheres, we obtain a {\em multipod},
also known as {\em Stewart platform}, which is a linkage consisting of a fixed base and a moving platform that are connected by {\em legs} 
of fixed length that are attached to platform and base by spherical joints (see Figure~\ref{fig:planar}).
Flight simulators or other linkages that are supposed to make irregular motions are often manufactured as hexapods with additional prismatic joints
at each leg that change its length; in this section, as already stated, the leg lengths remain constant.
Each leg gives a codimension~1 condition on the displacement
of the platform with respect to the base, hence the CGK-formula gives the estimate $6-n$ for the mobility an $n$-pod. Strictly speaking, each leg may
be considered as a link that may also revolve around the line connecting its two anchor points, but we disregard this component of
the motion. 
So, pentapods are generically mobile, and hexapods are generically rigid. 

A displacement $\R^3\to\R^3$ of the platform relative to the base is given by an orthogonal matrix $M\in\mathrm{SO}_3$ 
with determinant $1$ and the image $y\in\R^3$ of the origin of the base. We set $x:=-M^ty=-M^{-1}y$ to be the preimage 
of the origin of the platform and $r:=\langle x,x\rangle = \langle y,y\rangle$, where $\langle \cdot, \cdot \rangle$ 
is the Euclidean scalar product.
If we take coordinates $m_{11}, \dots, m_{33}$, $x_1, x_2, x_3$, $y_1, y_2, y_3$ and~$r$, together with a homogenizing
variable~$h$, in~$\P^{16}$, then
a direct isometry defines a point in projective space satisfying $h \neq 0$ and
\begin{equation}
\label{eq:compactification}
  \begin{gathered}
    M M^t \; = \; M^t M \; = \; h^2 \cdot \mathrm{id}_{\R^3}, \quad \mathrm{adj}(M) \; =hM^t , \\
    M^t y + h x \; = \; 0, \quad M x + h y \; = \; 0 , \\
    \langle x,x \rangle \; = \; \langle y,y \rangle \; = \; r  h, 
  \end{gathered}
\end{equation}
where $\mathrm{adj}(M)$ is the adjugate matrix. Recall that $A\cdot\mathrm{adj}(A)=\mathrm{adj}(A)\cdot A=\det(A)\cdot\mathrm{id}_{\R^3}$
for any $A\in\R^{3\times 3}$, therefore the above equations imply $\det(M)=h^3$.
The equations~\eqref{eq:compactification} define a variety~$X$ of dimension~6 and degree~40 in~$\P^{16}$, whose real points satisfying $h \neq 0$ 
are in one to one correspondence with the elements of~$\SE_3$. We call it the {\em group variety}; its projective space $\P^{16}$ containing $X$
is called {\em group space}.

Mathematically, a leg is a triple $(a,b,d)$, where $a\in\R^3$ is a point of the base, $b\in\R^3$ is a point of the platform, 
and $d\in\R$ is a positive number, the length of the leg. We define the {\em leg variety} $Y$ as the cone over the
Segre variety $\Sigma_{3,3} \cong \P^3\times\P^3$ in the projective space $\check{\P}^{16}$; recall that the Segre variety is
a subvariety of a projective space of dimension~15 and degree  ${3+3\choose 3}=20$, hence $Y$ has dimension $7$ and degree $20$. 
The values of projective coordinates of a leg $(a,b,d)$ are $u:=1$, $a_i$, $b_j$ and
$z_{ij}:=a_ib_j$ for $i=1,2,3$, and the {\em corrected leg length} $l := \langle a, a \rangle + \langle b, b \rangle - d^2$.
The projective space $\check{\P}^{16}$ containing $Y$ is called {\em leg space}. 

The reason for this very specific choice of coordinates is the following. The algebraic condition 
$\langle Ma+y-b,Ma+y-b\rangle = d^2$ is bilinear in these coordinates:
\begin{equation}
\begin{gathered}
\label{eq:bilinear_sphere_condition}
  l h + u r - 2 \sum_{i=1}^3 a_ix_i - 2 \sum_{j=1}^3 b_jy_j
 - 2 \sum_{i, j = 1}^{3} z_{ij} m_{ij} \; = \; 0.
\end{gathered}
\end{equation}
Hence it defines a duality between group space and leg space. Every point in group space, in particular every group element, corresponds
to a hyperplane in leg space; every point in leg space, in particular every leg, corresponds to a hyperplane in group space. More
generally, to every $k$-plane in group space there is a dual $(15-k)$-plane in leg space, for $k=0,\dots,15$.

The duality has various implications for multipods, whether they are paradoxical or not. To start with, choose 6 generic legs.
They span a generic 5-plane in leg space. The dual 10-plane in group space is also generic and, since it has codimension 6,
intersects $X$ in $\deg(X)=40$ points (real or complex). Hence a generic hexapod has 40 configurations, possibly complex.

Now, we choose 5 generic legs. They span a generic 4-plane in leg space, dual to a generic 11-plane in group space, which intersects
$X$ in a curve $C$ of degree~40: the configuration curve of a generic pentapod. We can compute its genus. We first compute the Hilbert
series of $X$ from a generating set of its ideal: $H(X)(t)=\frac{1+10t+18t^2+10t^3+t^4}{(1-t)^7}$. Because $C$ is a codimension 5 subvariety of $X$
defined by 5 linear forms, we may compute the Hilbert series of $C$ from the Hilbert series of $X$:
\[ H_C(t) = H_X(t)(1-t)^5 = \frac{1+10t+18t^2+10t^3+t^4}{(1-t)^2} = 1+12t+41t^2+80t^3+120t^4+\dots . \]
This implies that $C$ is a curve of genus 41, and its embedding in $\P^{11}$ is half canonical.

\paragraph{The Bricard-Borel Infinity-Pod.}
Here is the infinity-pod that has won the Prix Vallaint to Borel~\cite{Borel:08} and Bricard~\cite{Bricard:07}. 
We intersect $X$ with the 3-space defined by
\[ r+\beta h-2\alpha m_{11} = m_{11}-m_{22} = m_{12}+m_{21} = m_{33}-h = x_{3}+y_{3} = \]
\[ m_{13} = m_{23} = m_{31} = m_{32} = x_1 = x_2 = y_1 = y_3 = 0, \]
where $\alpha,\beta\in\R$ are real parameters such that $\alpha\ne 0$. The result is a quartic curve defined by the equations 
\[ m_{11}^2+m_{12}^2-h^2=x_3^2-2\alpha m_{11}h+\beta h^2 = 0 \]
and by the linear equations above. It parametrizes a motion $C$ contained in the two-dimensional stabilizer of the third axes $L$,
generated by rotations around $L$ and translations in the direction of $L$. The dual 12-plane in leg space is defined by
\[ z_{12}-z_{21} = z_{11}+z_{22}-\alpha u = a_3-b_3 = l-2z_{33}-\beta u = 0 . \]
A leg $(a,b,d)$ in the intersection with $Y$ if and only if
\[ a_1b_2-a_2b_1 = a_1b_1+a_2b_2-\alpha = a_3-b_3 = a_1^2+a_2^2+b_1^2+b_2^2-d^2-\beta = 0 . \]
For any point $(a_1,a_2,a_3)$ in the base such that $(a_1,a_2)\ne (0,0)$, there is a unique point
$(b_1,b_2,b_3)$ in the platform and a length such that the motion $C$ keeps the distance of base and platform point equal to $d$.
To get the platform point corresponding to a given base point $(a_1,a_2,a_3)$, we invert its projection $(a_1,a_2)$
on the circle with radius $\sqrt{|\alpha|}$ and keep the third coordinate; if $\alpha<0$, then we also have to rotate
the projection by an angle of $\pi$. 

In the degenerate case $\alpha=\beta=0$, one of the equations of the quartic curve is a perfect square, and the reduce equations
$m_{11}^2+m_{12}^2-h^2=x_3$ define a conic in a 2-space. In leg space, we have one less linear equation: $z_{12}-z_{21}=z_{11}+z_{22}=l-2z_{33}=0$,
or equivalently  
\[ a_1b_2-a_2b_1 = a_1b_1+a_2b_2 = a_1^2+a_2^2+b_1^2+b_2^2-d^2 = 0 . \]
Here we get a four-dimensional set of possible legs with two components, 
namely the set of legs where the platform point or the base point lies on the $z$-axis. The motion is just a revolution around
the $z$-axis.  % (see Figure~\ref{fig:butterfly}).

\paragraph{Planar Multipods.}
We consider now the linear subspace $L_p\subset\check{\P}^{16}$ of dimension~9 in the leg space defined by the equations
\[ a_3=b_3=z_{13}=z_{23}=z_{31}=z_{32} = z_{33}= 0 . \]
Its intersection $Y_p$ with the leg variety consists of all legs such that the two anchor points lie on a fixed plane. 
The variety $Y_p$ is the Segre variety $\Sigma_{2,2} \cong \P^2\times\P^2$; let us call its elements informally
{\em planar legs}. The degree of $Y_p$ is ${2+2\choose 2}=6$. 

A multipod such that all its base points are coplanar and all its platform
points are coplanar is called a {\em planar multipod} (see Figure~\ref{fig:planar}).
To obtain the configuration of a planar multipod, one has to intersect the dual space of the linear span
of all legs with the group variety $X$. The linear span of the legs is contained in $L_p$, hence the dual space of the linear span contains
the dual space $L_p^\bot$. This linear space does not intersect the group variety, otherwise we would have a displacement that
preserves the length of all legs in $Y_p$, which is impossible. What we can say is that the projection 
$\P^{16}\dashrightarrow \P^9$ with center $L_p^\bot$ projects the group variety to a subvariety $X_p\in\P^9$ of dimension~6
and degree~20 by a map that is generically 2:1. Hence the configuration of a planar multipod come in pairs: for every configuration,
there is a conjugated configuration. It can be obtained by an outer automorphism of $\SE_3$, namely the conjugation by
the reflection with respect to the plane containing the anchor points. 

\begin{figure}[h]
\begin{center} 
        \begin{overpic}[width=55mm]{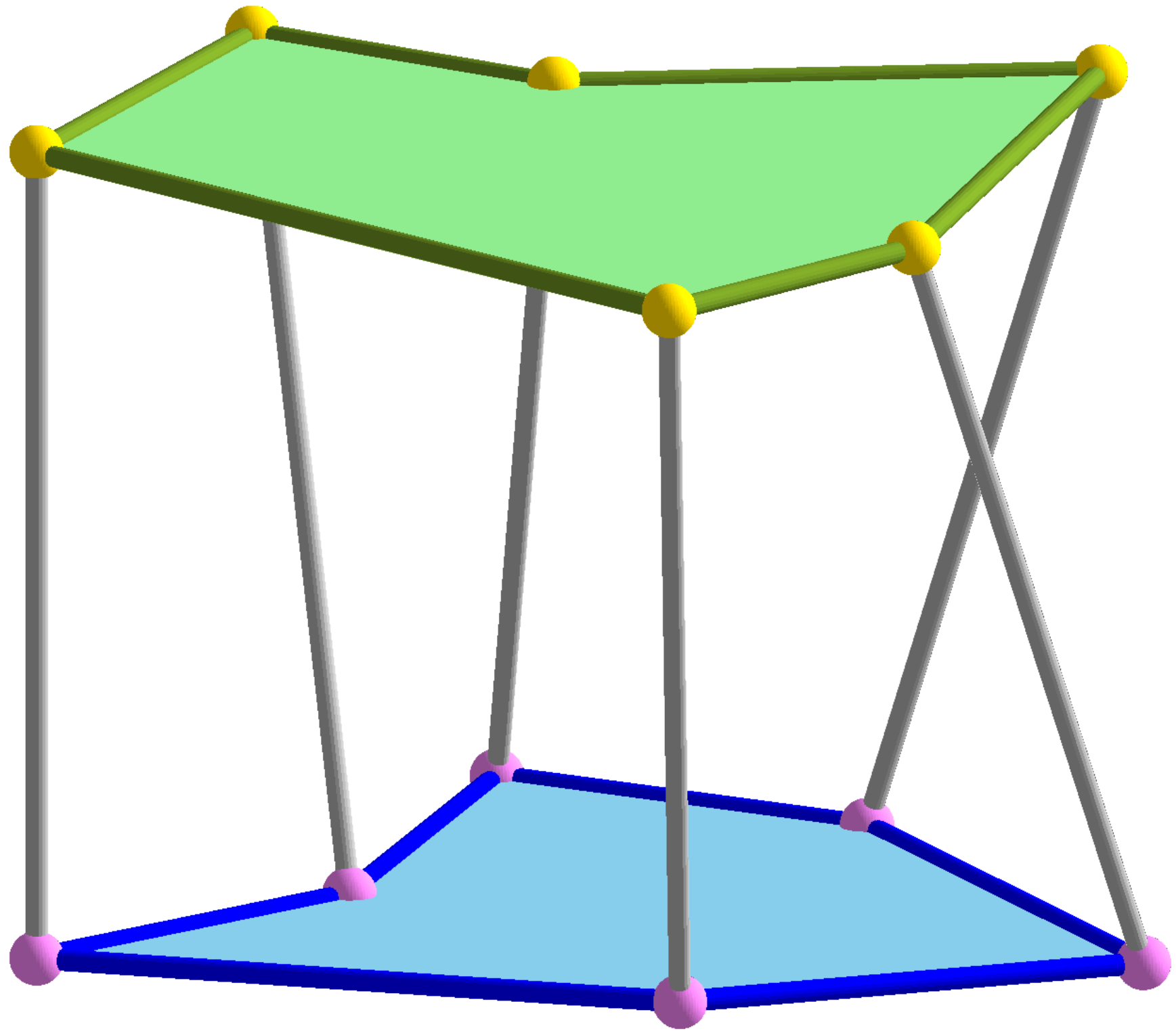}
                \begin{small}
                        \put(-4,0){$a_1$}
                        \put(50,-4){$a_2$}
                        \put(100,2){$a_3$}
                        \put(78,19){$a_4$}
                        \put(34,24){$a_5$}
                        \put(20,13){$a_6$}
                        \put(-6,73){$b_1$}
                        \put(50,56){$b_2$}
                        \put(70,69.5){$b_3$}
                        \put(98,80){$b_4$}
                        \put(45,85){$b_5$}
                        \put(12,86){$b_6$}
                \end{small}
        \end{overpic}
\end{center}
\caption{A planar hexapod. For any configuration, there is also a conjugated configuration that can be obtained by
	reflection on the green base plane.}
\label{fig:planar}
\end{figure}

It is surprisingly easy to construct paradoxically moving planar hexapods. Here is the reason.

\begin{thm}[Duporcq]
Let $y_1,\dots,y_5\in Y_p$ be five generic planar legs. Then there exists a planar leg $y_6\in Y_p$ such that the configuration
space of the pentapod defined by $(y_1,y_2,y_3,y_4,y_5)$ is equal to the configuration space of the hexapod
defined by $(y_1,y_2,y_3,y_4,y_5,y_6)$.
\end{thm}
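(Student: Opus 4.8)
The plan is to read the theorem off from the group--leg duality of \eqref{eq:bilinear_sphere_condition}. Write $H_y\subseteq\P^{16}$ for the hyperplane in group space dual to a leg $y$. For legs $y_1,\dots,y_k$ the configuration space of the corresponding multipod is $X\cap H_{y_1}\cap\dots\cap H_{y_k}$, and since $H_{y_1}\cap\dots\cap H_{y_k}$ is exactly the annihilator $\Pi^\bot$ of $\Pi:=\langle y_1,\dots,y_k\rangle$, this configuration space is $X\cap\Pi^\bot$; for planar legs $\Pi$ is contained in the $9$-plane $L_p$.

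The crucial, essentially tautological, observation is this: if a further planar leg $y_6$ lies in the span $\Pi=\langle y_1,\dots,y_5\rangle$, then, choosing a representative $y_6=\sum_{i=1}^5 c_iy_i$ and using bilinearity of \eqref{eq:bilinear_sphere_condition}, one gets $\langle g,y_6\rangle=\sum_i c_i\langle g,y_i\rangle=0$ for every $g\in\Pi^\bot$, i.e.
\[ H_{y_6}\ \supseteq\ \Pi^\bot\ \supseteq\ X\cap\Pi^\bot . \]
Hence the hexapod $(y_1,\dots,y_5,y_6)$ has configuration space $X\cap\Pi^\bot\cap H_{y_6}=X\cap\Pi^\bot$, which is precisely the configuration space of the pentapod $(y_1,\dots,y_5)$. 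So the whole theorem reduces to the statement: \emph{the $4$-plane $\Pi$ spanned by five generic planar legs contains a sixth planar leg $y_6\in Y_p$ distinct from $y_1,\dots,y_5$.}

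To produce $y_6$ I would argue inside $L_p\cong\P^9$: there $Y_p$ is a cone over the Segre variety $\Sigma_{2,2}$, so it has dimension $5$ and degree $\binom{4}{2}=6$. A generic $4$-plane of $\P^9$ meets $Y_p$ in a reduced $0$-dimensional scheme of length $6$ and is therefore spanned by any $5$ of those $6$ points; conversely, then, for generic $y_1,\dots,y_5\in Y_p$ their span $\Pi$ is such a $4$-plane, and $\Pi\cap Y_p$ consists of the five chosen legs together with exactly one residual point $y_6$ (which is thus unique, recovering the classical form of the statement). A short reality/finiteness check completes things: the chosen legs being real, the residual point of a degree-$6$ real $0$-cycle with five real points is real; for generic data its homogenizing coordinate is nonzero, so the anchor points are finite; and its corrected-leg-length coordinate makes $d_6^2>0$ on a nonempty open set of choices, so $y_6$ is an honest leg. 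Finally, $X\cap\Pi^\bot$ is positive-dimensional — it maps onto $X_p\cap\P^4\subseteq\P^9$, a curve since $\dim X_p=6$ — so both linkages genuinely move and the hexapod is indeed paradoxical.

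The main obstacle is the count in the third paragraph: although five generic points of $Y_p$ span a $4$-plane, that plane is a very special ($6$-secant) one rather than a generic one, and one must make sure it still cuts $Y_p$ in exactly $6$ reduced points — the cone structure of $Y_p$ (as opposed to the smooth $\Sigma_{2,2}$, which one handles by projecting from the cone vertex) being the one mildly delicate point — and that the residual leg is real, finite and of positive length. Everything else — the duality dictionary and the span computation — is immediate from the framework already developed in this section, which is exactly why the construction of paradoxically moving planar hexapods is, as advertised, ``surprisingly easy''.
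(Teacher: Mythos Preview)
Your proposal is correct and follows essentially the same line as the paper's proof: both reduce the statement, via the duality \eqref{eq:bilinear_sphere_condition}, to the fact that the $4$-plane spanned by five generic points of $Y_p$ (dimension~$5$, degree~$6$ in $L_p\cong\P^9$) meets $Y_p$ in one residual sixth point, and that a leg lying in the span of the others imposes no new linear condition on $X$. Your write-up is in fact more scrupulous than the paper's --- you flag that the secant $4$-plane is not literally generic, handle the cone vertex, and add the reality/finiteness checks --- but the underlying argument is identical.
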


\begin{proof}
Let $V\subset \check{\P}^{16}$ be the linear span of $y_1,\dots,y_5$. Its dimension is $4$. The dimension of $Y_p$ is $5$.
Both $V$ and $Y_p$ are contained in $L_p\cong\P^9$, hence the intersection $Y_p\cap V$ is finite. Its cardinality is
equal to the degree of $Y_p$, which is 6. We know already 5 points; we choose $y_6$ to be the 6-th.

For both, the pentapod and the hexapod, the configuration set of the pentapod is the intersection of the group variety $X$ 
with the dual space $V^\bot$. The linear condition imposed by the 6-leg does not impose an independent condition
because it lies in the linear span of the other five.
\end{proof}

\paragraph{Line-symmetric Multipods.}
Another class of paradoxically moving hexapods is the class of line symmetric hexapods. They can be obtained as special
cases of line symmetric moving graphs (see Section~\ref{ss:linesym}). The graph consists of two octahedra $G_1,G_2$ together with
six edges each joining one point of $G_1$ to one point of $G_2$, so that these six edges provide a graph symmetry between
$G_1$ and $G_2$. The automorphism $\tau$ of the whole graph $G$ maps each vertex $v_1$ of $G_1$ to the vertex in $G_2$ 
connected with the unique vertex in $G_1$ that is not connected with $v_1$ (see Figure~\ref{fig:graphaut}). This graph automorphism does not 
fix any vertex or any edge. We fix a line $L$ of symmetry and embed $G$ so that the half turn around $L$ maps each vertex
$v$ to the image of $\tau(v)$, generically with respect to this condition. By the count in Section~\ref{ss:linesym},
the configurations are solutions of an algebraic system in 16 unknowns and 15 equations, implying mobility.

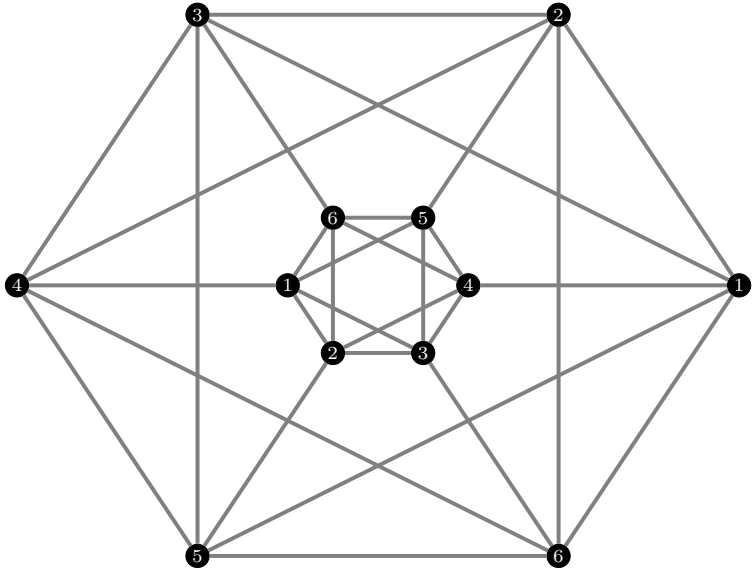
\begin{figure}[h]
\begin{center}
    \begin{tikzpicture}[scale=0.3]
      \node[lnode] (a1) at (16,0) {1};
      \node[lnode] (a2) at (8,12) {2};
      \node[lnode] (a3) at (-8,12) {3};
      \node[lnode] (a4) at (-16,0) {4};
      \node[lnode] (a5) at (-8,-12) {5};
      \node[lnode] (a6) at (8,-12) {6};

      \node[lnode] (b1) at (4,0) {4};
      \node[lnode] (b2) at (2,3) {5};
      \node[lnode] (b3) at (-2,3) {6};
      \node[lnode] (b4) at (-4,0) {1};
      \node[lnode] (b5) at (-2,-3) {2};
      \node[lnode] (b6) at (2,-3) {3};

      \draw[edge]  (a1)edge(a2) (a2)edge(a3) (a3)edge(a4) (a4)edge(a5) (a5)edge(a6) (a6)edge(a1)
                   (a1)edge(a3) (a2)edge(a4) (a3)edge(a5) (a4)edge(a6) (a5)edge(a1) (a6)edge(a2)
                   (b1)edge(b2) (b2)edge(b3) (b3)edge(b4) (b4)edge(b5) (b5)edge(b6) (b6)edge(b1)
                   (b1)edge(b3) (b2)edge(b4) (b3)edge(b5) (b4)edge(b6) (b5)edge(b1) (b6)edge(b2)
                   (b1)edge(a1) (b2)edge(a2) (b3)edge(a3) (b4)edge(a4) (b5)edge(a5) (b6)edge(a6);
    \end{tikzpicture}
\end{center}
\caption{A graph consisting of two octahedra and 6 additional edges, with a graph automorphism of order 2 that does not fix
	and vertex or any edge. The automorphism is shown by vertex orbits: conjugated vertices have equal labels. By symmetric counting of variables 
        and equations, a generic line symmetric embedding does move.
        In this motion, the two octahedra are rigid, and we obtain a moving hexapod.}
\label{fig:graphaut}
\end{figure}

It pays off to analyze the situation again by group-leg duality, following an analysis from \cite{Borel:08}.
Let $L_i\subset\P^{16}$ be the linear subspace in group
space defined by the linear equations $M=M^t$ and $x=y$; it intersects $X$ in the subset $X_i$ of all displacement of order~2 or 1.
Note that the order~2 elements in $\SE_3$ are exactly the rotations around lines by an angle of $\pi$.
These are six equations, hence $\dim(L_i)=10$. 
The dual subspace $L_i^\bot$ in leg space has dimension~5 and is defined by the equations 
$l=u=z_ii=a_i+b_j=z_{ij}=0$ for $i,j=1,\dots,3$, $i\ne j$.
We have a situation that mirror the planar hexapod case: the subspace $L_i^\bot$ does not intersect the leg variety, otherwise
there would be a leg which does not change length in all involutions. But the projection
$\check{\P}^{16}\dashrightarrow \check{\P}^{10}$ with center $L_i^\bot$ projects the leg variety $Y$ to a subvariety $Y_i\in\check{\P}^{10}$
of dimension~7 and degree~10 by a map that is generically 2:1. Hence the legs of a multipod with involutive displacements come in pairs:
if $(a,b,d)$ is a leg, then $(b,a,d)$ is also a leg. This can also be shown directly: if $\sigma\in\SE_3$ has order~2, then
\[ ||\sigma(a)-b|| = ||\sigma^2(a)-\sigma(b)|| = ||\sigma(b)-a|| . \]
Group-leg duality induces a duality between the projective subspace $L_i$ of dimension~10 that contains $X_i$ 
and the projective image space $\check{\P}^{10}$ that contains $Y_i$. Let us call the elements in $Y_i$ {\em twin pairs of legs};
each such pair of legs is constituted by a leg $(a,b,d)$ and by its conjugated leg $(b,a,d)$. Generically, three twin pairs in $Y_p$
correspond to three hypersurfaces in $L_i$. Since $\dim(X_i)=4$, the intersection of these three hypersurfaces and $X_p$ is a curve.
So, we have explained again the paradoxical mobility. 

But there is more. We have not just constructed a moving hexapod, we have even constructed, at the same time, a moving icosapod!
Here is the precise statement.

\begin{thm} \label{thm:bls}
Let $p_1,p_2,p_3$ be three generic twin pairs of legs. Let $C\subset X_s$ be the configuration curve of the hexapod defined
by all six legs. 
Then there exist seven additional twin pairs, maybe complex, such that $C$ is the set of
all order~2 displacements compatible with all 20 legs.
\end{thm}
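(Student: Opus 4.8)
### Proof Proposal

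The plan is to exploit group--leg duality exactly as in the hexapod discussion, but now in the reverse direction: instead of starting with legs and counting how many configurations survive, I would start with the configuration curve $C$ and count how many twin pairs of legs are compatible with \emph{all} of it. The key dimensional input is that $X_i$ (the variety of order~$2$ displacements inside the $10$-plane $L_i$) has dimension~$4$ and that $Y_i$, its dual image under the projection with center $L_i^\bot$, lives in $\check\P^{10}$ with degree~$10$. A single twin pair $p\in Y_i$ imposes, via the bilinear form~\eqref{eq:bilinear_sphere_condition}, one linear condition on $L_i$; equivalently, the compatible twin pairs for a given displacement sweep out a hyperplane section of $Y_i$. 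So the first step is to make precise the claim that ``twin pair $p$ is compatible with displacement $\sigma\in X_i$'' is a symmetric incidence relation cut out by a bilinear form, restricting~\eqref{eq:bilinear_sphere_condition} to the dual pair $L_i\leftrightarrow\check\P^{10}$.

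Next I would turn the three generic twin pairs $p_1,p_2,p_3$ into a linear space: they span a $2$-plane $W\subset\check\P^{10}$, and the configuration curve is $C = X_i\cap W^\bot$, where $W^\bot\subset L_i$ is a $7$-plane (codimension~$3$ in the $10$-dimensional $L_i$), so $\dim C = 4-3 = 1$, confirming the hexapod is mobile. Now reverse the roles. The curve $C$ spans some linear subspace $\langle C\rangle\subseteq L_i$; call its projective dimension $c$. The twin pairs compatible with \emph{every} point of $C$ are exactly those in $\langle C\rangle^\bot \cap Y_i$ inside $\check\P^{10}$. The whole theorem then reduces to the assertion that $\langle C\rangle^\bot$ is a linear space whose intersection with $Y_i$ has cardinality~$10$ (counted with multiplicity, and generically reduced), of which we already know the six we started with, leaving exactly seven more (possibly complex). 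Two things must be checked: (a) $\langle C\rangle$ has the ``expected'' dimension, so that $\langle C\rangle^\bot$ meets $Y_i$ in a $0$-dimensional scheme, and (b) that scheme has length $\deg Y_i = 10$. Point (b) is immediate from Bézout/degree theory once (a) holds: a generic $(10-\dim\langle C\rangle^\bot - \dots)$... more carefully, since $\dim Y_i = 7$ and $\deg Y_i = 10$, a complementary-codimension linear section meets it in $10$ points, so I need $\operatorname{codim}_{\check\P^{10}}\langle C\rangle^\bot = 7$, i.e. $\dim\langle C\rangle = 6$ in $L_i\cong\P^{10}$.

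So the crux is the linear-algebra fact $\dim\langle C\rangle = 6$. This is where I expect the real work. The curve $C$ is a complete intersection of $X_i$ with a generic $\P^7$ inside $\P^{10}$; I would compute its Hilbert function the same way the paper computes the genus of the pentapod configuration curve, namely from the Hilbert series of $X_i$. If $X_i\subset\P^{10}$ is arithmetically Cohen--Macaulay of dimension~$4$ (which should follow from, or be checkable alongside, the structure of its defining ideal --- the equations $M=M^t$, $x=y$ cut a nice symmetric-matrix-type variety), then $H_C(t) = H_{X_i}(t)\,(1-t)^3$, and reading off the coefficient of $t$ gives $\dim\langle C\rangle = h_C(1) - 1$. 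I would carry out this computation explicitly: identify $X_i$ (it should be closely related to the variety of symmetric $3\times3$ matrices $M$ with $M^2 = h^2\cdot\mathrm{id}$ together with a quadric from $\langle x,x\rangle = \langle y,y\rangle = rh$), get its degree and Hilbert numerator, and deduce $h_C(1) = 7$. The potential obstacle is that $C$ might be linearly degenerate in a way not detected by the Hilbert function of the ambient $X_i$ --- that is, $C$ could fail to span the expected $\P^6$ because $X_i$ itself sits in a smaller linear space or because the generic $\P^7$ is forced to be special; I would rule this out by checking that $X_i$ is linearly nondegenerate in $L_i\cong\P^{10}$ and that a generic codimension-$3$ linear section stays nondegenerate in its span, which is a standard Bertini-type argument for irreducible nondegenerate varieties of dimension~$\ge 2$.

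Finally, having located $10$ twin pairs in $\langle C\rangle^\bot\cap Y_i$ and recognized $6$ of them (the three given pairs, each a pair of legs, contributing the six legs of the hexapod), I would conclude that the remaining $7$ twin pairs --- equivalently $14$ more legs, for $20$ legs total --- are by construction compatible with every displacement of $C$, hence $C$ is contained in the configuration curve of the icosapod; the reverse inclusion is automatic since adding legs can only shrink the configuration set, and the hexapod already has configuration curve $C$. The only caveat, which I would flag explicitly as in the planar-hexapod and pentapod discussions, is that the seven new twin pairs may be complex rather than real; the theorem is stated with ``maybe complex'' precisely to accommodate this, and no further genericity argument is needed to get the count right over $\C$.
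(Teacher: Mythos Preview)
The paper's argument is much shorter and stays entirely on the leg side, without ever touching $C$. The three twin pairs span a $2$-plane $V\subset\check{\P}^{10}$; the paper intersects $V$ directly with $Y_i$ and asserts that this yields $\deg Y_i=10$ points, the seven residual ones being the new twin pairs. Since all ten lie in $V=\langle p_1,p_2,p_3\rangle$, they impose no new linear conditions on $X_i$, so the configuration curve is unchanged. Your route through $\langle C\rangle^\bot$ is more roundabout, though arguably it makes the B\'ezout count cleaner: you end up intersecting a $3$-plane with the $7$-fold $Y_i$ in $\check{\P}^{10}$, which is genuinely complementary, whereas the paper's $2$-plane is not.

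The real issue in your plan is that the two halves of your $\dim\langle C\rangle=6$ argument contradict each other. You want $h_C(1)=7$ from $H_C=(1-t)^3 H_{X_i}$, and then you propose to ``rule out'' trouble by verifying that $X_i$ is linearly nondegenerate in $L_i\cong\P^{10}$. But nondegeneracy would give $h_{X_i}(1)=11$ and hence $h_C(1)=8$, i.e.\ $\dim\langle C\rangle=7$, which destroys your B\'ezout step. What actually happens is the opposite of what you intend to check: every half-turn satisfies the linear relation $\mathrm{tr}(M)+h=0$, so the $4$-dimensional component of $X_i$ lies in a hyperplane $H\cong\P^9$, while the identity is an isolated point outside $H$ (in particular $X\cap L_i$ is neither equidimensional nor arithmetically Cohen--Macaulay). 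It is precisely this linear degeneracy that forces $\langle C\rangle\subset H\cap W^\bot\cong\P^6$ and makes $\langle C\rangle^\bot$ a $3$-plane. Once you work with the half-turn component inside $\P^9$, your argument goes through --- but the safeguard you describe is aimed in exactly the wrong direction.
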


\begin{proof}
The three twin pairs span a generic 2-plane in $V\subset\check{\P}^{10}$. The subvariety $Y_i\subset\check{\P}^{10}$ has dimension~7,
hence $V$ and $Y$ intersect in $\deg(Y_i)=10$ points. Three of them correspond to $p_1,p_2,p_3$, and the remaining seven are
the additional pairs we require. The linear span of all 10 points is equal to the linear span of $p_1,p_2,p_3$, namely $V$,
hence the conditions for displacements do not change.
\end{proof}

In \cite{Schicho:16d}, it is shown that there exist examples where all twenty legs are real. The proof is based on a result
on quartic spectahedra in \cite{DegtyarevItenberg:11,Ottem:15}.

\section{Compactification}
\label{sec:bond}

In enumerative algebraic geometry, for instance for the problem of counting rational curves on a projective variety,
compactifications of moduli spaces are known as a powerful tool. Here, we compactify the algebraic varieties
in which the configuration spaces are naturally embedded: products of subgroups of $\SE_3$ in the case
of linkages with revolute joints, $\SE_3$ itself in the case of multipods, and products of the plane 
in the case of moving graphs. 

\subsection{Moving Graphs}

The main theorem in  \cite{Schicho:19a} states that for a given graph, the existence of a flexible labeling is equivalent
to the existence of a NAC coloring. The construction of a flexible linkage from a given NAC-coloring was already 
explained in Section~\ref{sec:over}. For a construction proving the other implication, we need a compactification.

Let $(V,E,\lambda)$ be a graph with an edge assigment. We would like to projectivize in order to compactify; for this purpose,
it is convenient to slightly change the notion of a configuration slightly. A {\em homogeneous configuration} is an assignment of
vertices by points in $\R^2$ such that for any two edges $e=(i,j)$, $f=(k,l)$, the equality
\[ \lambda_e ||p_k-p_l||^2 = \lambda_f ||p_i-p_j||^2 \]
holds. For each vertex $i\in V$ with assigned point $p_k$, we write $p_k=(x_k,y_k)$ and $z_k:=x_k+\ci y_k$, $w_k:=x_k-\ci y_k$.
In other words, the complex numbers $z_1,\dots,z_{|V|}$ represent the vertices in the Gaussian plane of complex numbers.
In order to normalize, we require $p_1=(0,0)$.

The homogeneous configuration $p$ defines a point in $\P^{|V|-2}\times\P^{|V|-2}$ as follows: its first component has
projective coordinates $(z_2:\dots:z_{|V|})$, and its second component has coordinates $(w_2:\dots:w_{|V|})$. The equality above reads
\begin{equation} \label{eq:cf}
  \lambda_e (z_k-z_l)(w_k-w_l) - \lambda_f (z_i-z_j)(w_i-w_j) 
\end{equation}
in these projective coordinates. This is a bihomogeneous equation of bidegree $(1,1)$. The set of all solutions of (\ref{eq:cf})
is a projective subvariety of $\P^{|V|-2}\times\P^{|V|-2}$, the {\em configuration variety} of $(V,E,\lambda)$. Equivalent homogeneous
configurations define the same point in the configuration variety: since we fixed $p_1=(0,0)$, equivalent configurations are related
by a rotation or a scaling; but such a transformation just multiplies all $z$-coordinates by a complex nonzero constant and all
$w$-coordinates by a different complex nonzero constant, hence does not change the points in $\P^{|V|-2}$. 

A point $\alpha\in\P^{|V|-2}\times\P^{|V|-2}$ corresponds to a homogeneous configuration if and only if it fulfills two extra conditions.
First, the conjugate has to coincide with the flip of the first and second component; if this condition fails, then some of the corresponding
points in the plane have nonreal coordinates. Second, for some edge $e=(i,j)$, we have $(z_i-z_j)(w_i-w_j)\ne 0$. By (\ref{eq:cf}),
the choice of the edge has no influence on the validity of this extra condition. 

The boundary of the configuration set is defined as the set of points in the configuration variety that fail to satisfy the two
extra conditions. In particular, for some edge $e=(i,j)$, or equivalently for all edges, we have $(z_i-z_j)(w_i-w_j)=0$. For each point $\beta$
in the boundary, we define a coloring of the edges of the graph in the following way: the edge $(i,j)$ is colored red if $z_i-z_j$ 
vanishes at $\beta$, and blue otherwise.

\begin{lem} \label{lem:NAC}
For any point $\beta$ in the boundary of the configuration variety, the coloring defined by it is a NAC-coloring.
\end{lem}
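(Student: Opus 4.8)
The plan is to verify that the coloring $E = E_r \cup E_b$ defined by a boundary point $\beta$ satisfies the two defining properties of a NAC-coloring: (i) both color classes are non-empty, and (ii) every cycle is either unicolored or contains at least two edges of each color. Throughout I work with the fixed projective coordinates $(z_2:\dots:z_{|V|})$, $(w_2:\dots:w_{|V|})$ of $\beta$, together with $z_1 = w_1 = 0$ coming from the normalization $p_1 = (0,0)$; the key structural fact I will exploit is equation~(\ref{eq:cf}), which at the point $\beta$ says that for any two edges $e=(i,j)$ and $f=(k,l)$ we have $\lambda_e(z_k - z_l)(w_k - w_l) = \lambda_f(z_i - z_j)(w_i - w_j)$.

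First I would establish that the colors are well-defined and that both classes are non-empty. Since $\beta$ is a boundary point, $(z_i - z_j)(w_i - w_j) = 0$ for every edge, so each edge is genuinely monochromatic: it is red if $z_i = z_j$ and blue otherwise (in which case $w_i = w_j$ is forced by the vanishing of the product). For non-emptiness: if \emph{every} edge were red, then all $z_i$ on any connected component would coincide, and since the graph is connected (implicitly — or argue component-wise) and contains $z_1 = 0$, all $z_i = 0$; but then $(z_2:\dots:z_{|V|})$ is not a valid point of $\P^{|V|-2}$, contradiction. Symmetrically, not every edge can be blue. So $E_r \neq \emptyset \neq E_b$.

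The heart of the argument is the cycle condition, and this is where I expect the main obstacle. Take a cycle $C = (v_0, v_1, \dots, v_{m-1}, v_0)$ in $G$. Consider the $z$-coordinates around the cycle: the telescoping identity $\sum_{s=0}^{m-1}(z_{v_s} - z_{v_{s+1}}) = 0$ holds (indices mod $m$). Along a blue edge $z_{v_s} - z_{v_{s+1}}$ may be nonzero; along a red edge it is zero. Suppose the cycle is not unicolored, so it has at least one red and at least one blue edge; I must show it has at least two blue edges (the ``at least two red'' case is the mirror statement using the $w$-coordinates and the telescoping identity for $w$). If the cycle had exactly one blue edge, say $(v_0,v_1)$, then the telescoping sum would give $z_{v_0} - z_{v_1} = -\sum_{s\ge 1}(z_{v_s} - z_{v_{s+1}}) = 0$ since all those edges are red — contradicting that $(v_0,v_1)$ is blue. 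Hence at least two blue edges; symmetrically, at least two red edges. Combined with property (i), this shows the coloring is a NAC-coloring. The one subtlety I would double-check is that the vanishing/non-vanishing statements ``$z_i = z_j$'' genuinely transfer between coordinate representatives of the projective point $\beta$ (they do, since scaling all $z$-coordinates by a common nonzero constant preserves equalities $z_i = z_j$), and that the normalization vertex $v = 1$ with $z_1 = w_1 = 0$ is treated consistently when it lies on the cycle. Equation~(\ref{eq:cf}) and the positivity of the $\lambda_e$ are only needed to guarantee the product $(z_i-z_j)(w_i-w_j)$ has the same vanishing behavior across all edges, which underpins the clean red/blue dichotomy.
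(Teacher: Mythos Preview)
Your argument for ``exactly one blue edge'' is fine, but the claimed mirror argument for ``exactly one red edge'' does not go through. The coloring is \emph{asymmetric}: red is defined by $z_i=z_j$, blue by $z_i\ne z_j$ (which then forces $w_i=w_j$). A red edge is not required to satisfy $w_i\ne w_j$. So if a cycle has a single red edge $(v_0,v_1)$ and the rest blue, the $w$-telescope indeed gives $w_{v_0}=w_{v_1}$, but this is no contradiction: you have merely shown that both $z_{v_0}=z_{v_1}$ and $w_{v_0}=w_{v_1}$, which is entirely compatible with the edge being red. Nothing in your setup rules this out.

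The paper closes this gap by extracting more from equation~(\ref{eq:cf}) than the pointwise vanishing you use. That equation says the forms $(z_i-z_j)(w_i-w_j)$ are nonzero scalar multiples of one another on the whole configuration variety, so at $\beta$ they all vanish to the \emph{same order} $m$ (along a branch through $\beta$). For the single red edge both factors vanish, giving $m\ge 2$; for each blue edge the $z$-factor has order $0$, so the $w$-factor has order at least $m$. The $w$-telescope then forces the $w$-factor of the red edge to have order at least $m$ as well, whence the red edge's product has order at least $m+1>m$, contradicting that all products have order $m$. Your proposal only uses the evaluation of (\ref{eq:cf}) at $\beta$ (where it degenerates to $0=0$); the asymmetric case genuinely needs this valuation argument.
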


\begin{proof}
Assume, indirectly, that all edges are red. Then the first projection of $\beta$ to $\P^{|V|-2}$ has only zero coordinates,
which is impossible.

Assume, indirectly, that all edges are blue. For any edge $(i,j)$, we have $(z_i-z_j)(w_i-w_j)=0$ and $z_i-z_j\ne 0$.
It follows that the second projection of $\beta$ to $\P^{|V|-2}$ has only zero coordinates, which is impossible.

Assume, indirectly, that $(i_1,\dots,i_k,i_1)$ is cycle such that $(i_r,i_{r+1})$ is red for all $r=1,\dots,k-1$, and $(i_k,i_1)$ is blue. 
Then $z_{i_1}=\dots=z_{i_k}$ and $z_{i_k}\ne z_{i_1}$, which is impossible.

Assume, indirectly, that $(i_1,\dots,i_k,i_1)$ is cycle such that $(i_r,i_{r+1})$ is blue for all $r=1,\dots,k-1$, and $(i_k,i_1)$ is red.
Then $w_{i_1}=\dots=w_{i_k}$, hence $w_{i_k}=w_{i_1}$. In addition, we also have $z_{i_k}=z_{i_1}$ as $(i_k,i_1)$ is red. Therefore the form
$(z_{i_1}-z_{i_k})(w_{i_1}-w_{i_k})$ vanishes with order $m\ge 2$ at $\beta$. The order of this form is the same for every edge, and because $(i_r,i_{r+1})$
is blue, the forms $z_r-z_{r+1}$ have order zero for $r=1,\dots,k-1$. Hence the order of the forms $w_r-w_{r+1}$ is at least $m$, for all $r$.
Then the form $w_{i_1}-w_{i_k}$ vanishes with order at least $m$, and this is a contradiction.
\end{proof}

\begin{thm} \label{thm:NAC}
A $(V,E)$ has a flexible labeling $\lambda$ if and only if it has a NAC-coloring.
\end{thm}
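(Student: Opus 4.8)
The plan is to prove the two implications separately. The direction ``NAC-coloring $\Rightarrow$ flexible labeling'' was already carried out in Section~\ref{sec:over}: given a NAC-coloring $E = E_r \cup E_b$, one realizes each red connected component as a point $z_i \in \C$, each blue component as a point $w_j \in \C$, places the vertex in $R_i \cap B_j$ at $z_i + e^{\ci t} w_j$, and reads off the resulting edge lengths $\lambda$; distinct values of $t$ give non-congruent realizations, so $\lambda$ is flexible. Hence only the converse needs work, and this is where the compactification machinery of this subsection comes in.

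For ``flexible labeling $\Rightarrow$ NAC-coloring'', suppose $\lambda$ is a flexible labeling. Then the configuration variety $Z \subset \P^{|V|-2} \times \P^{|V|-2}$ cut out by the bidegree-$(1,1)$ equations~\eqref{eq:cf} contains a real curve's worth of genuine homogeneous configurations (the positive-dimensional family of non-congruent realizations, which survives the passage to $\P^{|V|-2} \times \P^{|V|-2}$ since congruent configurations are identified there). The key point is that this one-dimensional (or higher) real set of ``honest'' configurations cannot be closed inside $Z$: I would argue that a positive-dimensional irreducible component of $Z$ containing honest configurations must meet the boundary, i.e.\ must contain at least one point $\beta$ where $(z_i - z_j)(w_i - w_j) = 0$ for some — equivalently, by~\eqref{eq:cf}, for every — edge. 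Concretely, the ``honest'' locus is an open subset of such a component (it is defined by the open conditions that the conjugate equals the component-flip and that $(z_i-z_j)(w_i-w_j) \ne 0$), and since the component is projective hence compact while the honest locus, being essentially parametrized by the non-compact configuration space modulo isometry, is not, the honest locus must be a proper open subset; its complement in the component is exactly the boundary. Once we have produced such a boundary point $\beta$, Lemma~\ref{lem:NAC} immediately yields a NAC-coloring of $(V,E)$, and we are done.

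The main obstacle, and the step requiring genuine care, is the non-emptiness of the boundary: one must be sure that the flexibility of $\lambda$ really forces the relevant component of $Z$ to be positive-dimensional \emph{and} that this component is not entirely contained in the honest locus. The first half is a dimension count: the honest configurations modulo congruence form a variety of dimension equal to the mobility, which is $\ge 1$ by hypothesis, and this maps injectively into $Z$, so $Z$ has a component $Z_0$ of dimension $\ge 1$ through an honest point. The second half is the crux: $Z_0$ is a closed subvariety of the compact space $\P^{|V|-2} \times \P^{|V|-2}$, hence compact, whereas the honest locus inside $Z_0$ is — after the reality condition and the open condition $(z_i-z_j)(w_i-w_j) \ne 0$ — a non-compact real semialgebraic set (it surjects onto, or at least parametrizes, an unbounded piece of actual planar realizations: e.g.\ the realizations can be rescaled, or more intrinsically the real points where some squared edge ratio tends to $0$ or $\infty$ escape to the boundary). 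Therefore the closure of the honest locus in $Z_0$ strictly contains it, and the extra points are precisely boundary points. Making the ``non-compactness forces a limit point on the boundary'' argument airtight — ruling out that the honest locus is already closed for trivial reasons, and handling the case where $Z_0$ happens to consist only of non-real or degenerate points away from a single honest point — is where one has to be most careful; I would handle it by taking a real analytic or semialgebraic path $t \mapsto$ configuration inside the mobility curve, projectivizing, and extracting a convergent subsequence of parameters tending to a value where the path leaves the honest locus, using compactness of $\P^{|V|-2} \times \P^{|V|-2}$ to guarantee the limit exists in $Z_0$ and lies on the boundary by construction.
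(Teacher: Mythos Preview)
Your overall plan matches the paper's: one direction is the construction from Section~\ref{sec:over}, and for the converse you aim to locate a boundary point of the configuration variety and invoke Lemma~\ref{lem:NAC}. The gap is in how you produce that boundary point. You claim the honest locus inside $Z_0$ is non-compact, so its closure must spill into the boundary; but this is false in general. For a parallelogram, the honest configurations modulo rotation and scaling are parametrized by a single angle and form a \emph{compact} circle in $\P^{2}\times\P^{2}$ that never touches the boundary. Both of your proposed reasons for non-compactness fail: rescaling is exactly what projectivization has already quotiented out, and edge-length ratios cannot drift to $0$ or $\infty$ because the equations~\eqref{eq:cf} pin them to the values prescribed by $\lambda$. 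No real path or subsequence argument will exit the honest locus in this example.

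The paper sidesteps this by working over $\C$ and ignoring the reality condition entirely. The complex variety $K$ (your $Z_0$) is positive-dimensional and projective; the form $(z_i-z_j)(w_i-w_j)$ has bidegree $(1,1)$, so its zero locus is an ample divisor whose complement is the affine set $\mathbb{A}^{|V|-2}\times\mathbb{A}^{|V|-2}$. A positive-dimensional complete variety cannot sit inside an affine variety, hence $K$ meets the boundary --- at a possibly non-real point, which is fine since Lemma~\ref{lem:NAC} makes no reality assumption. That single observation replaces your entire compactness discussion. If you want to phrase it in your language, the correct non-compact object is the set of \emph{all complex} points of $Z_0$ off the boundary, not the real honest locus.
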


\begin{proof}
If $(V,E,\lambda)$ is flexible, then its configuration set is a projective variety $K$ of positive degree in $\P^{|V|-2}\times\P^{|V|-2}$. For any edge
$(i,j)\in E$, the form $(z_i-z_j)(w_i-w_j)$ has to vanish somewhere in $K$. Therefore, $K$ meets the boundary. By Lemma~\ref{lem:NAC}, it follows that
$(V,E)$ has a NAC-coloring.

Conversely, assume that we have a NAC-coloring of the edges. Then we  make the graph moving by a construction already given in Section~\ref{sec:gk}:
the red edges always keep their direction and move by translations only, while the blue edges rotate with uniform speed.
\end{proof}

For example, the graph in Figure~\ref{fig:nonac} has no NAC-coloring and therefore never moves for any labeling $\lambda$. 

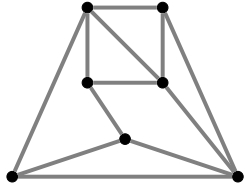
\begin{figure}[h]
\begin{center}
    \begin{tikzpicture}
        \node[vertex] (a) at (-0.5,-0.75) {};
                                \node[vertex] (b) at (0.5,0.5) {};
                                \node[vertex] (c) at (1.5,0.5) {};
                                \node[vertex] (d) at (2.5,-0.75) {};
                                \node[vertex] (e) at (0.5,1.5) {};
                                \node[vertex] (f) at (1.5,1.5) {};
                                \node[vertex] (g) at (1,-0.25) {};
                                \draw[edge]  (a)edge(d) (a)edge(e) (b)edge(c) (b)edge(e) (c)edge(d) (c)edge(e) (c)edge(f) (d)edge(f) (e)edge(f) (g)edge(a) (g)edge(b) (g)edge(d);
    \end{tikzpicture}
\end{center}
\caption{A graph that does not have a NAC coloring. Consequently, the graph is rigid for every possible labeling of its edges.}
\label{fig:nonac}
\end{figure}

A weakness of Theorem~\ref{thm:NAC} is that its constructive part -- the construction of flexible labelings -- produces only a particular type
of motions that we my call ``uniform speed motions''. Also, these motions sometimes map different non-adjacent vertices to the same point
in the plane. For example, in the case of the complete bipartite graph $K_{3,3}$, all uniform speed motions 
map at least two pairs of vertices to the same point in the plane, and the moving graph looks like a moving parallelogram. Deciding if a given
graph has labeling with a generically injective motion is much harder than deciding the existence of a flexible labeling; see \cite{Schicho:20b}.

\subsection{Revolute Loops}

The complete classification of mobile 4R loops was given by Delassus (see Section~\ref{sec:rl}).
The complete classification of mobile 5R loops was given in \cite{Karger:98} with the help of computer algebra.
For 6R loops, the classification is still open; the difficult part is to come up with necessary conditions for mobility.
In this subsection, we explain a method for deriving necessary criteria, which applies to $n$R loops for $n=4,5,6$.

We start with the closure equation expressed in algebraic way. Let $d_1,\dots,d_n$ (normal distances), $s_1,\dots,s_n$ (offsets), 
and $w_1,\dots,w_n$ (cotangents of half angles) the invariant Denavit-Hartenberg parameters. For $r=0,\dots,n-1$, the dual
quaternion $g_r := (1-s_r\eps\qi)(w_r-\qk)(1-d_i\qk)$ is the displacement that transforms the internal coordinate system of
link $r$ to the internal coordinate system of link $r+1$ (modulo $n$), if the configuration parameter is zero.
The closure equation is an equation in the variables
$t_0,\dots,t_{n-1}$, which denote the cotangents of the half configuration angles: the dual quaternion
\[ x(t_0,\dots,x_{n-1}) := (t_0-\qi )g_0(t_1-\qi )g_1\cdots (t_{n-1}-\qi )g_{n-1} \]
is a multiple of $1$, hence 7 of its 8 coefficients are zero. The variables $t_0,\dots,t_{n-1}$ may also assume the value $\infty$;
in this case, the corresponding factor $(t_r-\qi)$ is replaced by the scalar 1, or is simply omitted. In this section,
we will avoid this technicality.

We focus on solutions on the boundary, but this time we do not consider $t_r=\infty$ as boundary. Instead, we define the boundary
of $(\P^1)^n$ as the set of $n$-tuples $(t_0,\dots,t_{n-1})$ such that $t_r^2+1=0$ for at least one $r$. Indeed, if we
remove the boundary, then we get a group variety isomorphic to $(\SO_2)^n$, with an isomorphism respecting real structures.
The statement that $t_r^2+1=0$ for at least one $r$ is equivalent
to the statement $N(x(t_0,\dots,t_n))=0$, by the multiplicativity of the norm. Boundary solutions can never be real,
at least one of the variable must be equal to $\pm\ci$. 

{\bf Note:} Throughout this paper, we use $\qi$ for the first quaternion unit in $\H$, $\ci$ for the imaginary unit in $\C$,
and $i$ for a running integer. In this sections, both $\qi$ and $\ci$ will appear, sometimes in the same expression; but
we will try to avoid using $i$ for an integer.

Unfortunately, the closure equation often has many solutions that are not of interest. But we can obtain more equations
by cyclic permutation of its factors, or by using quaternion conjugation to bring some factors to the other side, as in
\[ \lambda (t_0-\qi)g_0= \nu \overline{g_{n-1}}(t_{n-1}+\qi)\overline{g_{n-2}}\cdots \overline{g_1}(t_1+\qi) \]
for some scalars $\lambda,\nu$ that are not both equal to zero. This condition can be expressed by polynomial equations,
namely the $2$-minors of the $2\times 8$ matrix whose rows are the coordinates of $(t_0-\qi)g_0$ and of
$\overline{g_{n-1}}(t_{n-1}+\qi)\overline{g_{n-2}}\cdots \overline{g_1}(t_1+\qi)$.
After having added all these reformulation of the closure equations to our system of
equations, we look for solutions on the boundary. These are called {\em bonds}. 

It is an easy exercise to prove that at least two of $t_0,\dots,t_{n-1}$ must be $\pm\ci$. {\em Hint:} use a formulation of the
closure equation with factors on both sides, and then take the norm on both sides. There are many examples with
exactly two of $t_0,\dots,t_{n-1}$ being $\pm\ci$. If, say, $t_1^2+1=t_k^2+1=0$ for some $k<n$, and $t_i^2+1\ne 0$ for $i\ne 1,k$,
then we say that the first joint and the $k$-th joint are {\em entangled} in the respective bond. 
We can then prove the following equations:
\begin{equation} \label{eq:bond}
\begin{aligned}
  (t_1-\qi )g_1(t_2-\qi)g_2\cdots (t_k-\qi ) = 0,  \\
  (t_k-\qi )g_k(t_{k+1}-\qi)g_{k+1}\cdots (t_0-\qi )g_n(t_1-\qi ) = 0. 
\end{aligned}
\end{equation}
If the number of coordinates $t_r$ with $t_r^2+1=0$ is bigger than two, then Equation~\ref{eq:bond} also holds form some
$k$, up to cyclic permutation by \cite[Lemma~2 and Theorem~3]{hss2}. 

Equation~\ref{eq:bond} together with $t_1^2+1=t_k^2+1=0$ is quite restrictive and often has implications on the invariant
parameters that are hard-coded in $g_0,\dots,g_{n-1}$. The case $k=2$ is easy to analyze: assume 
\[ (\ci-\qi)g_1(\ci-\qi) = 0 . \]
Then it follows that $w_1=d_1=0$; geometrically this means that the first two rotation axes are equal, except that they have
opposite orientation in the closure equation. We may exclude this degenerate case, and then we always have $k>2$
(and modulo $n$, this also excludes $k=0$).

If $k=3$, then we get the equation
\[ (\ci-\qi)g_1(t_2-\qi)g_2(\ci-\qi) = 0, \]
up to orientation of the first and/or third axis. This is a system of inhomogeneous linear equations for $t_2$.
It has a solution in three cases: either the three axes are parallel, or the three axes are concurrent, or the equations
\[ s_2=0, \frac{d_1}{\sin(\alpha_1)}=\frac{d_2}{\sin(\alpha_2)} \]
are true. This should be compared with Bennett's condition for a 4R loop to be mobile in Section~\ref{sec:rl}: it is
exactly the condition that has to be fulfilled for three axis that is true if and only if there exists a 4th line
that supplements the three lines to a mobile 4R loop. The ``Benett condition for three lines'' mysteriously appears
in Dietmaier's collection \cite{Dietmaier:95} of known families of 5R loops and 6R loops. The bond equation explains why: in a mobile
linkage, bonds have to be present, and for each bond there must be two non-neighboring joints entangled in a bond.
In a 5R loop, we then have $k=3$ up to a cyclic permutation. In a 6R loop, we have either $k=3$ -- entanglement
of diagonal joints --, or $k=4$, entanglement of opposite joints.
Many known families have some bond that entangles diagonal joints.

The analysis of the case $k=4$ is more involved; however, it is necessary in order to explain mobility of 6R linkages
that have no three consecutive axis fulfilling the Bennett condition for three lines. Assume that $n=6$, and we have
a bond $\vec{t}$ that entangles the first and the fourth joint. Without loss of generality, we may assume $t_1=t_4=\ci$. 
Then we obtain the equations
\begin{equation}
\begin{aligned}
  (\ci-\qi)g_1(t_2-\qi)g_2(t_3-\qi)g_3(\ci-\qi) = 0,  \\
  (\ci-\qi)g_4(t_5-\qi)g_5(t_0-\qi)g_0(\ci-\qi) = 0.  
\end{aligned}
\end{equation}
Excluding some degenerate cases (4 parallel lines, or 4 lines meeting in a point), the first equation allows two solutions
for $(t_2,t_3)$, while the second equation allows two solutions for $(t_0,t_5)$. These partial solutions are not independent.
They have to satisfy another reformulation of the closure equation:
\[ \lambda(\ci-\qi)g_1(t_2-\qi)g_2(t_3-\qi)g_3 = \nu\overline{g_0}(t_0+\qi)\overline{g_5}(t_5+\qi)\overline{g_4}(\ci+\qi) , \]
for some complex numbers $\lambda,\nu$ that are not both equal to zero. By resultants, we can eliminate the variables $t_0,t_2,t_3,t_5$
and obtain an equivalent formulation without these variables: the two quadratic polynomials
\[ Q_1^+(x) = \left(x+\frac{b_{3}c_3-b_{1}c_1}{2}-\frac{s_{1}}{2}\ci\right)^2 + \]
 \[ \frac{\ci}{2}\left(b_1 s_{2}+b_{3} s_{3}+s_{2} b_{3} c_{2}+s_{3} b_1 c_{2}\right) -\]
 \[ \frac{b_1 b_{3} c_{2}-s_{2} s_{3} c_{2}}{2}
   + \frac{s_{2}^2+s_{3}^2-b_1^2+b_{2}^2-b_{3}^2-b_{2}^2 c_{2}^2}{4}, \]
\[ Q_4^+(x) = \left(x+\frac{b_0c_0-b_{4}c_4}{2}-\frac{s_{4}}{2}\ci\right)^2 + \]
 \[ \frac{\ci}{2}\left(b_4 s_{5}+b_0 s_0+s_{5} b_0 c_{5}+s_0 b_4 c_{5}\right) -\]
 \[ \frac{b_4 b_0 c_{5}-s_{5} s_0 c_{5}}{2}
   + \frac{s_{5}^2+s_0^2-b_4^2+b_{5}^2-b_0^2-b_{5}^2 c_{5}^2}{4} \]
in $\C[x]$ have a common zero; here, $c_i:=\cos(\alpha_i)$ and $b_i:=\frac{d_i}{\sin(\alpha_i)}$ for $i=1,\dots,6$.
For the details this elimination of variables, we refer to \cite{Schicho:15e}.

If one of the coordinates of $t_1$ and $t_4$, or both, are equal to $-\ci$, then there are two quadratic polynomials that
are similarily defined, having a common zero. In total, the number of bonds entangling the first joint and fourth joint is even 
(because bonds always appear in complex conjugate pairs) and at most 8. It is equal to 8 if and only if the two 
polynomials are equal in each of the four pairs of quadratic polynomials. 

Suppose that we have the maximal number of 8 bonds entangling opposite axes, for all three pairs of opposite axes.
This assumption leads to a system of algebraic equations in the
invariant Denavit/Hartenberg parameters (18 variables). Using computer algebra, we can compute the solution set
(see \cite{Schicho:15e}). It turns out that there are two components $F_1$ and $F_2$, of dimension 6 and 7, respectively. 
Both are families of mobile 6R loops that have not been known before bonds were used in kinematics. But the family $F_1$ (the one
of dimension 6) has a 5-dimensional subfamily which is classical: Bricard's orthogonal 6R loops, characterized by
the vanishing of $c_0,\dots,c_5$  (i.e., all angles are right angles) and $s_0,\dots,s_5$ (i.e., all offsets are zero), 
and the single equation $b_0^2-b_1^2+b_2^2-b_3^2+b_4^2-b_5^2=0$.

\subsection{Multipods}

The two varieties that play a role in the analysis of multipods, namely the group variety $X\in\P^{16}$ and the 
leg variety $Y\in\check{P}^{16}$, both come with a natural definition of a boundary: the boundary of $X$ is defined
by the linear equation $h=0$ and the boundary of $Y$ is defined by the linear equation $u=0$, with the variable
setting as in Section~\ref{sec:pod}. The group variety is more interesting, because the configuration set of a mobile multipod 
-- defined as the intersection of $X$ with hyperplanes dual to the legs --
will always intersect the boundary $h=0$. The leg set of a mobile multipod, on the other hand, might be disjoint
from the boundary $u=0$.

Let us have a closer look at the boundary $B:=X\cap H$, where $H$ is the hyperplane $H:h=0$. We refer to \cite{Schicho:14c}
for the calculation; here we report on only the facts we will use later. First, $B$ is a variety of dimension~5 and degree~20.
The variety $X$ -- which has degree~20 -- and the hyperplane $H$ intersect tangentially along $B$, with intersection multiplicity~2.
The boundary $B$ has a natural decomposition into five locally closed subsets, which we denote by $Z_i$, $Z_b$, $Z_s$, $Z_c$, and $Z_v$.
The stratum $Z_i$ has dimension~5 and consists of all points in $B$ which are smooth points of $X$ such that at least one of the 
$m_{ij}$-coordinates is not zero. The stratum $Z_i$ has dimension~4 and consists of all points in $B$ which are singular points of $X$
such that at least one of the $m_{ij}$-coordinates is not zero. The stratum $Z_s$ has dimension~3 and consists of all remaining boundary
points such that one of the coordinates $x_1,x_2,x_3$ is not zero and one of the coordinates $y_1,y_2,y_3$ is not zero. The stratum
$Z_c$ has dimension~2, and here one of the previous three triples of coordinates has values all zero. Finally, the stratum $Z_v$
consists of a single point: all coordinates except $r$ are zero. It is the only point on $B$ defined over the reals, all other
boundary points are complex.

For a multipod given by a set of legs, we have a set of hyperplanes in $\P^{16}$ dual to the legs. We now define the set of bonds 
of the multipod as the set of intersection points of all these hyperplanes with the boundary $B$. 
The main point of the analysis of the boundary is that the presence of bonds in particular strata has geometric implications
for the geometry of the legs. Let us show this for the stratum $Z_s$.
Here, the projections  $x:=(x_1:x_2:x_3)$ and $y:=(y_1:y_2:y_3)$ both exist, because there is at least 
one in both triples of coordinates that does not vanish. From the equations, it is easy to derive that both $x$ and $y$
have to lie on the absolute conic $x_1^2+x_2^2+x_3^2=0$, which clearly has no real points. 

\begin{thm} \label{thm:simbond}
Let $\{ (a_l,b_l,d_l) \mid l\in L\}$ be the leg set of a multipod, where $L$ is an index set parametrizing the legs.
Assume that this multipod has a bond in $Z_s$.
Then there exist orthogonal projections $p_a:\R^3\to\R^2$ and $p_b:\R^3\to\R^2$
and a similarity transformation $s:\R^2\to\R^2$ such that $s(p(a_l))=p(b_l)$ for all $l\in L$.
\end{thm}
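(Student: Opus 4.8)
The plan is to extract the geometric content of a bond in the stratum $Z_s$ directly from the duality equation~\eqref{eq:bilinear_sphere_condition}. Let $\beta\in Z_s\subset B$ be the bond. By definition of the boundary, $h=0$ at $\beta$, and by the description of $Z_s$ we know: at least one of $x_1,x_2,x_3$ is nonzero, at least one of $y_1,y_2,y_3$ is nonzero, and — this is the key fact we are allowed to quote from the discussion preceding the theorem — the projective points $x:=(x_1:x_2:x_3)$ and $y:=(y_1:y_2:y_3)$ both lie on the absolute conic $x_1^2+x_2^2+x_3^2=0$. Since a bond of the multipod is by definition a point lying on every hyperplane dual to a leg, for each $l\in L$ the leg $(a_l,b_l,d_l)$ and the point $\beta$ satisfy~\eqref{eq:bilinear_sphere_condition}. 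Substituting $h=0$ and $u=1$ (the legs are affine, so $u\neq 0$) into~\eqref{eq:bilinear_sphere_condition} kills the terms $lh$ and the $z_{ij}m_{ij}$ contributions are what remain together with the $x$- and $y$-linear terms; more precisely one gets, for every $l\in L$,
\begin{equation*}
 r - 2\sum_{i=1}^3 (a_l)_i x_i - 2\sum_{j=1}^3 (b_l)_j y_j - 2\sum_{i,j}(a_l)_i(b_l)_j m_{ij} = 0,
\end{equation*}
and one checks from the remaining equations of~\eqref{eq:compactification} with $h=0$ that on the relevant stratum the matrix $M$ is forced to be the rank-one tensor $M = x\,y^{t}$ up to scalar (this is exactly the computation in \cite{Schicho:14c} that identifies $Z_s$); substituting this makes the quadratic term factor as $\langle a_l,x\rangle\langle b_l,y\rangle$, so the condition becomes
\begin{equation*}
 r = 2\langle a_l,x\rangle + 2\langle b_l,y\rangle + 2\langle a_l,x\rangle\langle b_l,y\rangle \qquad\text{(schematically)},
\end{equation*}
an identity that, after the reduction, expresses that a fixed quadratic form in $\langle a_l,x\rangle$ and $\langle b_l,y\rangle$ takes the same value $r$ for every $l$.

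Next I would turn this common-value condition into the asserted geometric statement. Because $x$ lies on the absolute conic, the real and imaginary parts of the complex vector $x\in\C^3$ span a plane $\Pi_a\subset\R^3$ and are orthonormal up to scale; the linear functional $a\mapsto\langle a,x\rangle$ therefore factors through the orthogonal projection $p_a:\R^3\to\Pi_a\cong\R^2$, composed with an $\R$-linear isomorphism $\Pi_a\to\C$. Symmetrically $y$ determines an orthogonal projection $p_b:\R^3\to\Pi_b\cong\R^2$ and an identification $\Pi_b\cong\C$. Writing $\zeta_l:=\langle a_l,x\rangle\in\C$ and $\omega_l:=\langle b_l,y\rangle\in\C$, the displayed condition says that $\zeta_l$ and $\omega_l$ are linked by a single equation of the form $\zeta_l\omega_l + \zeta_l + \omega_l = \text{const}$ (possibly after absorbing constants), equivalently $(\zeta_l+1)(\omega_l+1)=\text{const}$, i.e. $\omega_l = c/(\zeta_l+1) - 1$ for a fixed $c\in\C$. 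After translating the origins in $\Pi_a$ and $\Pi_b$ (harmless, since translations of $\R^3$ do not affect which orthogonal projection we use, only its affine part) this reads $\omega_l = c/\zeta_l$: inversion in a circle. Composing the inversion with a complex conjugation if necessary — inversion $w\mapsto c/w$ in $\C$ is the composition of the circle inversion $w\mapsto \bar c/\bar w$ with conjugation, and conjugation on $\Pi_b$ is an isometry — one absorbs everything except a genuine similarity. One subtlety: $w\mapsto c/w$ is not itself a similarity of $\R^2$; but the theorem only claims $s(p_a(a_l))=p_b(b_l)$ for a \emph{similarity} $s$, so I must argue that the inversive relation actually degenerates to an affine one here. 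This is where the hypothesis that \emph{all} legs share the bond is used: an inversion sends generic points to points not lying on any line through the center, so requiring the relation to persist for a genuine multipod forces the relation to be, after the dust settles, an affine map; concretely, the correct reading of~\eqref{eq:bilinear_sphere_condition} on $Z_s$ (as opposed to the nearby strata) kills the quadratic cross-term, leaving $\langle a_l,x\rangle + \langle b_l,y\rangle = \text{const}$, which \emph{is} a similarity relation $\zeta_l = -\omega_l + \text{const}$.

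The main obstacle, then, is not the linear algebra but getting the reduction of~\eqref{eq:bilinear_sphere_condition} at $Z_s$ exactly right: I need to know precisely which terms survive when $h=0$ together with the defining equations of the stratum $Z_s$ inside $B$ (in particular whether $M$ degenerates fully or only partially, and whether the $z_{ij}$-term contributes a genuine bilinear form or a degenerate one). That computation is carried out in \cite{Schicho:14c}, and I would quote from there the normal form of a point of $Z_s$ — namely $h=0$, $M$ proportional to the outer product $x\,y^{t}$ with $x,y$ on the absolute conic, and $r$ the free coordinate — and then the duality equation reduces, as above, to the single real-affine condition $\langle a_l,x\rangle + \langle b_l,y\rangle = r/2$ for all $l$. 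Interpreting $a\mapsto\langle a,x\rangle$ as $\text{(isometry)}\circ p_a$ and likewise for $b$, this says $p_b(b_l) = s(p_a(a_l))$ where $s$ is the affine map $z\mapsto r/2 - z$ read through the two $\R^2\cong\C$ identifications, which is a similarity (indeed an isometry composed with a reflection). Assembling $p_a$, $p_b$, $s$ from these data completes the proof. I would close by remarking that the two projection \emph{directions} are the normals to $\Pi_a$ and $\Pi_b$, i.e. the real loci encoded by $x$ and $y$, so the statement is genuinely about two a priori different orthogonal projections, matching the phrasing of Theorem~\ref{thm:simbond}.
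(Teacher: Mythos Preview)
Your proposal contains a genuine confusion about the structure of the stratum $Z_s$, and it causes the long detour through an inversive relation that has no business appearing in this proof. You assert (and later repeat as the ``normal form'') that on $Z_s$ the matrix $M$ is proportional to the rank-one tensor $x\,y^{t}$. That is false: the paragraph preceding the theorem defines $Z_i$ and $Z_b$ as the strata where at least one $m_{ij}$ is nonzero, and $Z_s$ as the \emph{remaining} boundary points with $x\ne 0$ and $y\ne 0$. Hence on $Z_s$ every $m_{ij}$ vanishes, so the entire $\sum z_{ij}m_{ij}$ term in \eqref{eq:bilinear_sphere_condition} drops out immediately. There is no $\langle a_l,x\rangle\langle b_l,y\rangle$ cross-term, no factorisation $(\zeta_l+1)(\omega_l+1)=\text{const}$, and no inversion to explain away. (The rank-one picture $M\propto x\,y^t$ and the inversive conclusion is precisely what happens on the \emph{other} stratum $Z_i$; that is Theorem~\ref{thm:invbond}, not this one.) Your final displayed relation $\langle a_l,x\rangle+\langle b_l,y\rangle=r/2$ is in fact the right shape once $M=0$ is used, but you arrive at it by backtracking rather than by argument, and it is inconsistent with the normal form you have just written down.

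The paper's proof avoids all of this by a single clean move you do not make: it uses the $12$-dimensional group of automorphisms of $X$ (left and right multiplication in $\SE_3$) to bring the bond to the explicit point with $(x_1,x_2,x_3)=(1,\ci,0)$, $(y_1,y_2,y_3)=(\lambda,\lambda\ci,0)$ and every other coordinate, including $r$, equal to zero. The duality equation \eqref{eq:bilinear_sphere_condition} then reads $a_1+\ci a_2+\lambda b_1+\ci\lambda b_2=0$, whose real and imaginary parts give $a_{l,1}=-\lambda b_{l,1}$ and $a_{l,2}=-\lambda b_{l,2}$ directly: the two orthogonal projections are both onto the $(1,2)$-plane and the similarity is scaling by $-\lambda$. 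Your approach of working with the unreduced bond and interpreting $a\mapsto\langle a,x\rangle$ as an isometric identification $\R^2\cong\C$ can be made to work (the isotropy of $x$ does give orthonormal real and imaginary parts up to a common scalar), but you would still need to track the relative scaling between $x$ and $y$ to see the similarity ratio, and you never do. The normal-form route makes that ratio visible as $\lambda$ with no extra effort.
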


\begin{proof}
The variety $X$ has an automorphism group of dimension~12, by left and right multiplication of group elements.
The statement of the theorem is invariant under these automorphism. We use suitable automorphisms to transform the bond in $Z_s$ to 
a point with coordinates $(x_1,x_2,x_3)=(1,\ci,0)$, $(y_1,y_2,y_3)=(\lambda,\lambda\ci,0)$ and all remaining coordinates zero; it is
important that this transformation can be achieved by real transformations (see \cite{Schicho:14c} for the calculation).
The corresponding hyperplane in leg space has equation
\[ a_1+a_2\ci +\lambda b_1+\lambda b_2\ci = 0 . \]
For all $l\in L$, the leg $(a_l,b_l,d_l)$ must lie on this hyperplane. The real part and the imaginary part of this equation must
both be zero: $a_{l,1}+\lambda b_{l,1}=a_{l,2}+\lambda b_{l,2}=0$. Therefore the claim follows.
\end{proof}

The stratum $Z_s\subset B$ is also called the {\em similarity stratum}, and the any bond in this stratum
is called a {\em similarity bond}. The theorem above states, in more informal language, that the presence of a similarity
bond implies the existence of two similar projections of base and platform. 
If a linkage has an infinite number of similarity bonds, then it can be
shown that for all projections of the platform points, there is a similar projection of the base points. This implies that 
the platform points and the base points are related by a similarity transformation of $\R^3$. This geometric observation plays a crucial role
in the classification of multipods of mobility~2 in \cite{Schicho:15c}.

There is the analoguous statement for the stratum $Z_i$; also the proof is analoguous.

\begin{thm} \label{thm:invbond}
Assume that the multipod above has a bond in $Z_i$.
Then there exist orthogonal projections $p_a:\R^3\to\R^2$ and $p_b:\R^3\to\R^2$
and an inversion $i:\R^2\to\R^2$ such that $i(p(a_l))=p(b_l)$ for all $l\in L$.
\end{thm}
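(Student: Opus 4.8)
The plan is to mimic the proof of Theorem~\ref{thm:simbond} essentially verbatim, replacing the ``similarity stratum'' $Z_s$ by the ``inversion stratum'' $Z_i$ and tracking how the change of stratum changes the algebraic form of the dual hyperplane. First I would recall that $X$ carries a $12$-dimensional automorphism group acting by left and right multiplication of group elements, and that this action is defined over $\R$; the statement to be proved is invariant under this action, since composing the projections $p_a,p_b$ with isometries of $\R^3$ and pre/post-composing the inversion $i$ with similarities of $\R^2$ again yields projections and an inversion. So it suffices to prove the statement after moving a given bond in $Z_i$ to a convenient normal form.

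The key step is therefore to identify a real normal form for a point of $Z_i$. By the description of the strata in Section~\ref{sec:bond}, a point of $Z_i$ lies on $B=X\cap H$, is a singular point of $X$, and has at least one $m_{ij}$-coordinate nonzero; one should cite \cite{Schicho:14c} for the fact that, using the real $12$-dimensional automorphism group, such a point can be brought to an explicit representative --- presumably one supported on a single $m_{ij}$ together with $r$ and suitable $x,y$ coordinates. Having such a representative, I would then feed it into the bilinear duality equation~\eqref{eq:bilinear_sphere_condition}: substituting the coordinates of the normalized bond into that equation produces the explicit linear equation in leg space that every leg $(a_l,b_l,d_l)$ of a multipod with this bond must satisfy. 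The point is that, whereas in the $Z_s$ case this equation was \emph{linear} in $(a_1,a_2)$ and $(b_1,b_2)$ and hence encoded a linear (similarity) relation between two planar projections, in the $Z_i$ case the nonzero $m_{ij}$ entries make the equation involve the products $z_{ij}=a_ib_j$. Splitting the resulting complex equation into real and imaginary parts then yields, for each leg, relations of the form ``a bilinear expression in the projections of $a_l$ and $b_l$ vanishes,'' which is exactly the algebraic signature of an inversion: two planar points $P,P'$ are related by an inversion (with respect to a suitable circle, possibly combined with a half-turn) precisely when $P$ and $P'$, together with the center, are collinear and $\langle P-O, P'-O\rangle$ is the fixed radius-squared --- a bilinear condition. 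Concretely, if $p_a(a_l)=(\alpha_1,\alpha_2)$ and $p_b(b_l)=(\beta_1,\beta_2)$, the bond equation should reduce to something like $\alpha_1\beta_1+\alpha_2\beta_2=\mathrm{const}$ together with $\alpha_1\beta_2=\alpha_2\beta_1$, which says exactly that $p_b(b_l)$ is the image of $p_a(a_l)$ under inversion.

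The main obstacle I expect is pinning down the precise real normal form of a point of $Z_i$ and verifying that the substitution into~\eqref{eq:bilinear_sphere_condition} really produces the inversion relation rather than some degenerate or projectively twisted variant --- in particular checking that the relevant $z_{ij}$ coefficients are nonzero so that a genuine (non-affine) inversion appears, and that the center and radius of the inversion are well defined and the same for all legs. This is the analogue of the step ``it is important that this transformation can be achieved by real transformations'' in the proof of Theorem~\ref{thm:simbond}, and like there the cleanest route is to delegate the explicit coordinate computation to the reference \cite{Schicho:14c} and only state the resulting hyperplane equation, then read off the inversion. Once the normalized hyperplane equation is in hand, the rest --- splitting into real and imaginary parts, defining $p_a$, $p_b$ as the two coordinate projections appearing, and recognizing the inversion --- is routine and parallels the $Z_s$ argument line for line.
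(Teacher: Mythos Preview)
Your proposal is correct and follows precisely the route the paper intends: the paper's own ``proof'' is the single sentence that the argument is analogous to that of Theorem~\ref{thm:simbond}, and you have spelled out exactly that analogy---normalize the bond via the real $12$-dimensional automorphism group (citing \cite{Schicho:14c}), plug into \eqref{eq:bilinear_sphere_condition}, and read off from the nonvanishing $m_{ij}$-coordinates the bilinear relations $a_1b_2-a_2b_1=0$, $a_1b_1+a_2b_2=\alpha$ that encode an inversion (cf.\ the Bricard--Borel example). One small slip: $Z_i$ consists of \emph{smooth} boundary points of $X$ with some $m_{ij}\neq 0$ (the paper has a typo repeating ``$Z_i$'' where ``$Z_b$'' is meant for the singular stratum), but this does not affect your argument.
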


Recall the Bricard-Borel multipod with infinitely many legs, described i Section~\ref{sec:pod}: all its legs $(a,b,d)$ satisfy the condition
\[ a_1b_2-a_2b_1 = a_1b_1+a_2b_2-\alpha = 0 \]
for some fixed $\alpha\in\R$, $\alpha\ne 0$. As we already saw, this is an inversion relation between the projections of base and platform.

If a multipod has a bond in $Z_b$, then there are two lines $G_a,G_b\subset\R^3$ such that for any leg $(a,b,d)$ in the dual hyperplane in leg space,
either $a$ lies in $G_a$ or $b$ lies in $G_b$. The presence of a bond in $Z_b$ implies the existence of two lines and a partition
of the set of legs into two subsets, with the first subset having collinear anchor points in the base and the second subset
having collinear anchor points in the platform. Let us called such a configuration a {\em combined collineation}. 
The existence of such a combined collineation already implies mobility for a suitable choice of leg lengths. 
To see this, we start with a configuration such that the lines $G_a$ and $G_b$ coincide -- the leg lengths have to
chosen so that such a configuration exists. Then we can rotate the platform around this line
(similar as the ``double banana'' Figure~\ref{fig:l3d}). 
%In Karger, this motion is called a {\em butterfly motion}, and the name
%was extended to the bonds of such motion.

The stratum $Z_c$ has two irreducible components. For one of these components, the projection $(x_1:x_2:x_3)$ is defined. 
The geometric implication is stronger than the implication from a bond in $Z_b$: all anchor points in the base have to be collinear.
For the second component, all anchor points in the platform have to be collinear.
If one of these two conditions is fulfilled, then a rotation motion is possible from any starting position. 

The hyperplane corresponding to the one point in $Z_v$ is the hyperplane at infinity. Hence there is no multipod with a finite leg
that has a bond in $Z_v$.

In summary, any boundary point implies {\em some} geometric condition on the two configurations of platform points 
and of base points. Hence the compactification gives necessary conditions for mobility: if a multipod is mobile,
then it must have some bonds, therefore one of the above geometric conditions hold. 

Many mobile multipods
have more than just one pair of complex conjugate bonds, since the number of bonds is related to the degree
of the configuration curve embedded in $\P^{16}$.
The correlation between the degree of the mobility curve of a hexapod and the number of special geometric events --
similar projections, inverse projections, or combined collineations -- motives the question on the maximal number of such events.
Here are the answers.

\begin{thm}
Assume that the six-tuple of points in the base and the six-tuple of points in the platform are not similar, and that
neither the base nor the platform consist of coplanar points.

a) The number of combined collineations is at most 16. If every anchor point appears in at most one leg, for both base and platform,
   then the maximal number of combined collineations is 4.

b) The number of projections related by a similarity transformations is at most 2. The maximum is reached if and only if the two six-tuples are
   affine equivalent.

c) The number of projections related by an inversion is at most 7.
\end{thm}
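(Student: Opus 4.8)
The plan is to count the relevant geometric events by intersecting, in leg space $\check{\P}^{16}$, a suitable linear subspace $V$ (the span of the six legs) with the subvariety $W$ parametrising the degenerate configurations of interest, and then to read off the degree of $W$. For each of the three items we first identify which stratum of the boundary $B\subset X$ is responsible (combined collineations $\leftrightarrow Z_b$, similarity projections $\leftrightarrow Z_s$, inverse projections $\leftrightarrow Z_i$, by Theorems~\ref{thm:simbond}, \ref{thm:invbond}, and the discussion of $Z_b$). The key observation is that each such event corresponds to a bond of the hexapod, i.e.\ to an intersection point of the dual $10$-plane $V^\bot\subset\P^{16}$ with the relevant stratum closure; dually, it corresponds to an intersection point of $V$ itself with a fixed subvariety of $\check{\P}^{16}$ which I will call the \emph{event variety} $E_s$, $E_i$, or $E_b$. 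Since the six legs are generic (subject to the stated non-degeneracy hypotheses), $V$ is a generic $5$-plane, so by Bertini the intersection $V\cap E_\ast$ is transverse and finite, of cardinality exactly $\deg(E_\ast)$ when $\mathrm{codim}(E_\ast)=5$, i.e.\ $\dim E_\ast=10$ (if the event variety has smaller dimension the count is $0$ and the bound is vacuous). So the entire problem reduces to computing the dimension and degree of each event variety.

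For part (b), the similarity stratum $Z_s$ has dimension $3$; its preimage under group--leg duality is a linear condition cut out on $\check{\P}^{16}$, and a careful bookkeeping (as in the proof of Theorem~\ref{thm:simbond}: the hyperplane dual to a $Z_s$-point imposes $a_{l,1}+\lambda b_{l,1}=a_{l,2}+\lambda b_{l,2}=0$) shows that the locus of legs admitting a \emph{fixed} pair of similar projections is a linear space, so the event variety $E_s$ — the union over all admissible projection pairs and similarities — has dimension and degree forcing at most $2$ events, the two coming from the reflection ambiguity (a similarity and its mirror image, exactly the conjugation by a plane reflection discussed for planar multipods). The statement that the maximum $2$ is attained precisely for affinely equivalent six-tuples is a rank condition: both projections being simultaneously realisable on all six legs forces the $6\times 4$ configuration matrices of base and platform to be related by an invertible affine map; I would verify this by writing out the $12$ linear equations and checking that the solution space is $2$-dimensional iff that affine map exists. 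For part (c), the inversion stratum $Z_i$ has dimension $4$, one more than $Z_s$, and the corresponding event variety $E_i$ — obtained the same way but with the inversion relation $a_1b_2-a_2b_1=a_1b_1+a_2b_2-\alpha=0$ replacing the linear similarity relation — is now genuinely non-linear (it is cut out by the quadrics appearing in the Bricard–Borel description in Section~\ref{sec:pod}); its degree works out to $7$. For part (a), the combined-collineation stratum $Z_b$ has dimension $4$ as well; here the event variety parametrises a choice of two lines $G_a,G_b$ together with a partition of the six legs, and its degree is governed by a Schubert-type computation on a product of Grassmannians of lines in $\P^3$ — this gives $16$ in general, dropping to $4$ under the additional hypothesis that each anchor point is used by at most one leg, which kills the configurations where the two lines degenerate onto repeated points.

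I expect the main obstacle to be the actual degree computations for $E_b$ and $E_i$. For $E_b$ one must set up the incidence correspondence $\{(G_a,G_b,S,\text{leg set compatible with the partition }S)\}$, resolve it, and push forward to $\check{\P}^{16}$ while carefully tracking the excess intersection coming from legs that could be assigned to either side; the number $16$ versus the reduced number $4$ is exactly the discrepancy between the full Segre/Grassmann computation and the one where multiplicities from repeated anchor points are excised, and getting this bookkeeping right is delicate. For $E_i$ the obstacle is that the quadratic relations are not complete intersections, so one needs either a resolution of the inversion locus or a direct Hilbert-series computation (analogous to the $H_X(t)=\frac{1+10t+18t^2+10t^3+t^4}{(1-t)^7}$ computation for $X$ in Section~\ref{sec:pod}) to extract the degree $7$. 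In all three cases the genericity and non-coplanarity hypotheses are used precisely to ensure transversality of $V\cap E_\ast$ and to rule out the positive-dimensional families of events (e.g.\ the infinity-pods of Section~\ref{sec:pod}), so I would state and use a Bertini/Kleiman transversality lemma at the outset and then devote the bulk of the proof to the three intersection-theoretic calculations sketched above.
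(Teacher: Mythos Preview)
Your framework has a structural flaw that undermines all three parts. You write that ``since the six legs are generic (subject to the stated non-degeneracy hypotheses), $V$ is a generic $5$-plane'' and then invoke Bertini to get $|V\cap E_\ast|=\deg E_\ast$. But the hypotheses of the theorem (six-tuples not similar, not coplanar) are open conditions; they do \emph{not} force $V$ to be generic. For a truly generic $V$ every one of the counts in (a), (b), (c) is zero: the strata $Z_b,Z_s,Z_i$ have dimensions $4,3,5$ respectively, while $V^\bot$ has dimension $10$ in $\P^{16}$, so $V^\bot\cap Z_\ast$ is generically empty for each of them. The theorem is an \emph{upper bound valid for every} hexapod satisfying the hypotheses, not a degree computation at a generic point. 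A Bertini count of $\deg E_\ast$ simply does not address the question.

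There are two further confusions. First, point--hyperplane duality does not turn a stratum $Z_\ast\subset\P^{16}$ into a subvariety $E_\ast\subset\check\P^{16}$ whose intersection with $V$ reproduces $V^\bot\cap Z_\ast$; your ``dually, it corresponds to an intersection point of $V$ itself with a fixed subvariety'' step has no content. Second, Theorems~\ref{thm:simbond} and~\ref{thm:invbond} say that a bond implies an event, not that events and bonds are in bijection; and the events in the theorem depend only on the anchor points $(a_l,b_l)$, not on the leg lengths $d_l$, so leg space is not even the right ambient variety for the count.

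The paper's own treatment is much more modest: it declares (a) and (b) to be exercises and refers (c) to a separate paper. For (a) the intended argument is elementary incidence combinatorics: a combined collineation is a partition $\{1,\dots,6\}=S\cup S^c$ with $\{a_l:l\in S\}$ collinear and $\{b_l:l\in S^c\}$ collinear, and one bounds the number of such partitions directly (the drop from $16$ to $4$ when anchor points are distinct is a pigeonhole on the sizes of $S$ and $S^c$, since non-coplanarity limits how many of six distinct points can lie on one line). For (b) the intended argument is linear algebra: a similar projection pair amounts to an affine map $T$ and a direction $v$ with $b_l-T(a_l)\in\R v$ for all $l$, and two independent such pairs pin down an affine equivalence between the six-tuples. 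Part (c) is genuinely hard and is the subject of \cite{Schicho:17b}; your sketch (``its degree works out to $7$'') is not a proof.
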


The proofs of (a) and (b) are left as exercises. For the proof of (c), we refer to \cite{Schicho:17b}.

It is conjectured in \cite{Schicho:17b} that for a generic choice of six points in $\R^3$, there exists a second six-tuple of points, such
that the maximal number of 7 projections related by an inversion is reached; such a six-tuple would then be unique up to similarity. 
The conjecture continues to state that there is a unique scaling and choice of leg length such that the so constructed hexapod is mobile, with a mobility
curve of maximal degree~28. 
For a numeric random choice, the conjecture can be tested by a construction taking about 300 seconds using computer algebra.
Using this construction, the conjecture has been verified for 50 random choices. Theoretically, it is still theoretically possible
that these 50 random choices were picked on some unknown subvariety with non-generic behavior, but it is quite unlikely.

\end{document}